\setlist[itemize]{topsep=0ex,itemsep=0ex,parsep=0.4ex}
\setlist[enumerate]{topsep=0ex,itemsep=0ex,parsep=0.4ex}
\crefname{lem}{Lemma}{Lemmas}
\crefname{thm}{Theorem}{Theorems}
\crefname{cor}{Corollary}{Corollaries}
\crefname{prop}{Proposition}{Propositions}
\crefname{conj}{Conjecture}{Conjectures}
\crefname{open}{Open Problem}{Open Problems}
\crefname{question}{Question}{Questions}
\crefname{claim}{Claim}{Claims}
\newcommand{\header}[1]{\textbf{\textit{#1}}\textbf{:}}
\newcommand{\defn}[1]{\textcolor{Maroon}{\emph{#1}}}
\def\NAT@spacechar{~}
\DeclarePairedDelimiter{\floor}{\lfloor}{\rfloor}
\DeclareMathOperator{\dist}{dist}
\DeclareMathOperator{\ltw}{ltw}
\DeclareMathOperator{\tw}{tw}
\DeclareMathOperator{\rtw}{rtw}
\DeclareMathOperator{\level}{level}
\DeclareMathOperator{\poly}{poly}
\renewcommand{\thefootnote}{\fnsymbol{footnote}}
\theoremstyle{plain}
\newtheorem{thm}[equation]{Theorem}
\newtheorem{lem}[equation]{Lemma}
\newtheorem{conj}[equation]{Conjecture}
\newtheorem{cor}[equation]{Corollary}
\newtheorem{prop}[equation]{Proposition}
\newtheorem{claim}{Claim}[thm]
\theoremstyle{definition}
\begin{document}

\author{
Nikolai Karol\footnotemark[2]}

\footnotetext[2]{School of Mathematics, Monash   University, Melbourne, Australia  (\texttt{nikolai.karol@monash.edu}).}

\sloppy

\title{\bf\boldmath String Graphs: Product Structure and Localised Representations}

\maketitle


\begin{abstract}

We investigate string graphs through the lens of graph product structure theory, which describes
complicated graphs as subgraphs of strong products of simpler building blocks. A graph $G$ is called a \defn{string graph} if its vertices can be represented by a collection $\mathcal{C}$ of continuous curves (called a \defn{string representation} of $G$) in a surface so that two vertices are adjacent in $G$ if and only if the corresponding curves in $\mathcal{C}$ cross. We prove that every string graph with bounded maximum degree in a fixed surface is isomorphic to a subgraph of the strong product of a graph with bounded treewidth and a path. This extends recent product structure theorems for string graphs. Applications of this result are presented. This product structure theorem ceases to be true if the `bounded maximum degree' assumption is relaxed to `bounded degeneracy'. For string graphs in the plane, we give an alternative proof of this result. Specifically, we show that every string graph in the plane has a `localised' string representation where the number of crossing points on the curve representing a vertex $u$ is bounded by a function of the degree of $u$.

Our proof of the product structure theorem also leads to a result about the treewidth of outerstring graphs, which qualitatively extends a result of Fox and Pach [\emph{Eur.~J.~Comb.} 2012] about outerstring graphs with bounded maximum degree. We extend our result to outerstring graphs defined in arbitrary surfaces.

\end{abstract}

\renewcommand{\thefootnote}
{\arabic{footnote}}

\section{Introduction} \label{section:intro}

For a finite collection $\mathcal{C}$ of sets, the \defn{intersection graph} $G$ of $\mathcal{C}$ has vertex set $\mathcal{C}$ where two sets in $\mathcal{C}$ are adjacent in $G$ if and
only if they have non-empty intersection. A \defn{string graph} is a graph isomorphic to the intersection graph of a collection of non-self-intersecting continuous curves (also called \defn{strings}) in a surface $\Sigma$ such that no three curves cross at a common point\footnote{Note that one may define a string graph as a graph isomorphic to the intersection graph of a collection of continuous curves in $\Sigma$. The additional properties of the curves in our definition can be ensured by a simple redrawing argument.}. Such a collection of curves is called a \defn{string representation} of this string graph in $\Sigma$. Most of the literature on this topic considers string graphs in the plane rather than on surfaces. This paper analyses both settings, with the main focus being on string graphs on arbitrary surfaces.

\citet{Benzer59} introduced the concept of string graphs in 1959, while studying patterns of mutations in DNA sequences. In 1966, \citet{Sinden66} independently considered the same concept in the study of electrical networks realisable by printed circuits. String graphs
were first formally defined by \citet{EET76} in 1976. Ever since, string graphs have been widely studied both from a theoretical and a practical point of view; see \citep{KGK86,MP-DM92,MP-DM93,Mat14,Pawlik14,SSS-JCSS03,Krat-JCTB91,Mat15,Lee17,JU17,SS-JCSS04,FP10,FP12,FP14,RW19,PRY20,Tom24,KM91,PT06,Kratochvil91,KSSS24,Davies25,CCTZ25} for example.

The primary goal of this paper is to explore structural properties of string graphs. There are several important structural results about string graphs. For example, \citet*{SSS-JCSS03} proved that recognising string graphs in the plane is in NP and conjectured that this result could be extended for string graphs on surfaces. Improving on the works of \citet{FP10,FP14} and \citet{Mat14}, \citet{Lee17} proved that every string graph with $m$ edges in a surface with Euler genus\footnote{See \cref{preliminaries} for omitted definitions.} $g$ has a balanced separator of size $\mathcal{O}_{g}(\sqrt{m})$. Very recently, \citet{Davies25} and \citet*{CCTZ25} independently proved that string graphs in the plane are quasi-isometric to planar graphs.

\subsection{Product Structure Theory} \label{subsectionproductstructure}

We analyse string graphs through the lens of graph product structure theory, which describes complicated graphs as subgraphs of strong products of simpler building blocks. Many key theorems in graph product structure theory can be expressed using the notion of row treewidth introduced by \citet{BDJMW22}. Say a graph $G$ is \defn{contained} in a graph~$H$ if $G$ is isomorphic to a subgraph of $H$. The \defn{row treewidth} of a graph $G$, denoted \defn{$\rtw(G)$}, is the minimum treewidth of a graph $H$ such that $G$ is contained in $H \boxtimes P$ for some path~$P$. Graph product structure theory originated in the prominent paper of \citet{DJMMUW20}, who
proved that planar graphs have bounded row treewidth. This result has been the key tool to resolve several major open problems regarding queue layouts~\citep{DJMMUW20}, centred colourings~\citep{DFMS21}, and universal graphs~\citep{EJM23,DEGJMM21}. Other graph classes have been shown to have bounded row treewidth, including graphs with bounded Euler genus \citep{DHHW22,DJMMUW20}, apex-minor-free graphs~\citep{DJMMUW20}, $k$-planar graphs~\citep{DMW23,HW24}, 
powers of bounded degree planar graphs~\citep{DMW23,HW24}, and $k$-matching-planar
graphs~\citep{HKW}.

We prove a product structure theorem for string graphs with bounded maximum degree on surfaces. This extends previous results of \citet*{DMW23} and \citet{HW24} about product structure of string graphs (see \cref{subsection:introlocal} for a detailed comparison).

\begin{thm} \label{thm:intromain} Let $G$ be a string graph with maximum degree at most $\Delta$ in a surface with Euler genus $g$. Then $\rtw(G) \leqslant f(\Delta, g)$ for some function $f$. That is, $G$ is contained in $H \boxtimes P$ for some graph $H$ of treewidth at most $f(\Delta, g)$ and for some path $P$.
\end{thm}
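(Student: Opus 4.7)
The plan is to use a BFS layering combined with a treewidth bound on outerstring subgraphs, drawing on the paper's announced extension of Fox--Pach to surfaces: outerstring graphs on a surface of Euler genus~$g$ with maximum degree at most~$\Delta$ have treewidth at most $h(\Delta, g)$ for some function $h$.

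Fix any vertex $v_0 \in V(G)$ and let $V_i := \{u \in V(G) : d_G(v_0, u) = i\}$ for $i \geq 0$. For each $i$, set $G_i := G[V_i \cup V_{i+1}]$. The first step is to exhibit $G_i$ as an outerstring graph on a surface of Euler genus at most some $g' = g'(g)$. The key observation is that every edge of $G$ incident to $V_i \cup V_{i+1}$ has its other endpoint in $V_{i-1} \cup V_i \cup V_{i+1} \cup V_{i+2}$, so in the original string representation the curves for $V_i \cup V_{i+1}$ cross only curves drawn from those four layers. By excising small tubular neighborhoods of the curves of $V_{i-1}$ and $V_{i+2}$ in $\Sigma$, we obtain a subsurface $\Omega \subseteq \Sigma$ with boundary onto which the (trimmed) curves of $V_i \cup V_{i+1}$ restrict as an outerstring representation: each such curve meets $\partial \Omega$ at a point where it previously crossed a $V_{i-1}$- or $V_{i+2}$-curve, and $\Omega$ has bounded Euler genus inherited from $\Sigma$.

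Applying the outerstring treewidth bound to each $G_i$ yields $\tw(G_i) \leq h(\Delta, g')$. We then stitch the tree decompositions of $G_0, G_1, \ldots$ together, overlapping along the common layer $V_{i+1}$, to obtain a single tree decomposition of $G$ in which every bag intersects every layer $V_i$ in at most $h(\Delta, g') + 1$ vertices. This establishes that $G$ has bounded layered treewidth, which is equivalent to bounded row treewidth~\citep{DJMMUW20}; hence $\rtw(G) \leq f(\Delta, g)$ for some function $f$, as required.

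The main obstacle is the outerstring representation step. While the layer structure forbids \emph{crossings} between $V_i \cup V_{i+1}$ and distant layers, the curves of those distant layers may still geometrically occupy arbitrary regions of $\Sigma$, and individual curves of $V_i \cup V_{i+1}$ could spatially wander far from their actual crossings. Constructing the cut subsurface $\Omega$, and simultaneously deforming the curves of $V_i \cup V_{i+1}$ so that each genuinely meets $\partial \Omega$, requires delicate topological work. The bounded maximum degree hypothesis enters precisely here: it controls how many curves must be simultaneously rearranged in any local region, which is what allows $\Omega$ to be chosen of bounded Euler genus while preserving the outerstring structure. This is also the step where the theorem fails under the weaker bounded-degeneracy hypothesis, since a single high-degree vertex can force curves to carry unboundedly many attachments to adjacent layers.
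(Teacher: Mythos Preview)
Your argument has two genuine gaps. The fatal one is the final step: bounded layered treewidth is \emph{not} equivalent to bounded row treewidth. \citet{BDJMW22}, the very paper that introduced row treewidth, constructs graphs of layered treewidth~$1$ and arbitrarily large row treewidth; only the inequality $\ltw(G)\leq\rtw(G)+1$ holds in general, not its converse, and \citep{DJMMUW20} certainly does not claim otherwise. So even if everything preceding it worked, you would obtain only a layered-treewidth bound, not the theorem as stated.

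The outerstring step is also broken as written. Excising tubular neighbourhoods of all curves in $V_{i-1}\cup V_{i+2}$ removes an \emph{unbounded} number of curves and yields a subsurface with unboundedly many boundary components; the bound in \cref{intro:outerstringmaxdegree} depends on both $g$ \emph{and} the number $c$ of grounding disks, and bounded maximum degree of $G$ in no way bounds $|V_{i-1}|$ or $|V_{i+2}|$. Moreover, a vertex of $V_i\cup V_{i+1}$ with no neighbour in $V_{i-1}\cup V_{i+2}$ (such as $v_0$ itself, or any frontier vertex) gives a curve with no reason to meet $\partial\Omega$ at all. The paper avoids all of this: it fixes a proper $(\Delta+1)$-colouring of the representation, builds the \emph{coloured planarisation} $\mathcal{C}^\phi$ (a graph embedded in $\Sigma$, hence of Euler genus at most $g$), shows via \cref{lem:CPL,lem:CPLdistance} that $G$ is a weak $r$-shallow minor of $\mathcal{C}^\phi\boxtimes K_{\Delta+1}$ with $r$ bounded in terms of $\Delta$, and then invokes the black-box \cref{thm:RTWmain} of \citet{HKW} to conclude bounded row treewidth directly.
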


We now briefly describe some applications of \cref{thm:intromain}. Graph classes with bounded row treewidth have bounded queue number\footnote{The \defn{queue number} of a graph $G$ is the minimum integer $k$ such that there is a vertex ordering $\sigma$ of $V(G)$ and a partition $E_1,\dots,E_k$ of $E(G)$, such that for each $i\in\{1,\dots,k\}$, no two edges in $E_i$ are nested with respect to $\sigma$. Here edges $uw, xy \in E(G)$ with $\sigma(u) < \sigma(w)$ and $\sigma(x) < \sigma(y)$  are \defn{nested} with respect to $\sigma$ if $\sigma(u) < \sigma(x) < \sigma(y) < \sigma(w)$ or $\sigma(x) < \sigma(u) < \sigma(w) < \sigma(y)$.}~\citep{DJMMUW20}, polynomial $p$-centred chromatic number\footnote{The \defn{$p$-centred chromatic number} of a graph $G$ is the minimum number of colours in a vertex-colouring $\eta$ of $G$ such that for every connected subgraph $X$ of $G$, $|\{\eta(v) : v \in V(X)\}| > p$ or there exists some $v \in V(X)$ such that $\eta(v) \neq \eta(w)$ for every $w \in V(X) \setminus \{v\}$.}~\citep{DMW23}, bounded layered treewidth~\citep{BDJMW22}, and bounded strong and weak colouring numbers~\citep{vdHW18,KY03}. Classes of $n$-vertex graphs with bounded row treewidth also admit a universal\footnote{A graph $U$ is \defn{universal} for a graph class $\mathcal{G}$ if every graph of $\mathcal{G}$ is isomorphic to an induced subgraph of $U$.} graph with $n^{1+o(1)}$ vertices and edges~\citep{EJM23,DEGJMM21}. Thus, by \cref{thm:intromain}, string graphs with bounded maximum degree satisfy all of these applications.

Classes of $n$-vertex graphs with bounded row treewidth admit balanced separators of size~$\mathcal{O}(\sqrt{n})$~\citep{DMW17,BDJMW22}. Thus, in the `bounded maximum degree' setting, the structure in \cref{thm:intromain} is significantly stronger than the previous results of \citet{FP10,FP14}, \citet{Mat14}, and \citet{Lee17} mentioned above.

\subsection{Treewidth of Outerstring Graphs}

Our proof methods of \cref{thm:intromain} use the tool of coloured planarisations, recently introduced by \citet*{HKW} in the analysis of $k$-matching-planar drawings. This technique allows us to establish a result about the treewidth of outerstring graphs. Let $D$ be a closed disk in a surface. A collection of curves lying outside of $D$ with one endpoint on the boundary of $D$ is \defn{grounded} on $D$. A string graph $G$ is called \defn{outerstring} if $G$ has a string representation $\mathcal{C}$ grounded on some disk $D$ in the plane. A tuple $(\mathcal{C}, D)$ is called an \defn{outerstring diagram} of $G$.

Treewidth is of fundamental importance in
structural and algorithmic graph theory; see \citep{Bodlaender98, HW17, Reed03} for surveys. Treewidth is the standard measure of how similar a graph is to a tree. The treewidth of outerstring graphs is a popular research topic. For example, \citet{CDDFGHHWWY24} proved that the treewidth of every outerstring graph $G$ is upper-bounded by a function of the Hadwiger number of $G$. The next result we mention uses the concept of degeneracy. A graph $G$ is \defn{$d$-degenerate} if every subgraph of $G$ has minimum degree at most $d$. The \defn{degeneracy} of $G$ is the minimum integer $d$ such that $G$ is $d$-degenerate. \citet{SOX24} proved that $d$-degenerate $n$-vertex outerstring graphs have treewidth $\mathcal{O}(d\log n)$. \citet{FP-EJC12} proved that outerstring graphs with maximum degree at most $\Delta$ have balanced separators of size~$\mathcal{O}(\Delta)$. By a result of \citet{DN19}, this implies the following.

\begin{thm} [\citep{FP-EJC12}] \label{thm:FPouterstring} Every outerstring graph with maximum degree at most $\Delta$ has treewidth~$\mathcal{O}(\Delta)$.
    
\end{thm}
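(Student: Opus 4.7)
The plan is to derive the statement as a direct combination of two external results. \citet{FP-EJC12} established that every outerstring graph with maximum degree at most $\Delta$ admits a balanced separator of size $\mathcal{O}(\Delta)$, while \citet{DN19} proved that if every subgraph of a graph $G$ has a balanced separator of size at most $s$, then $\tw(G) = \mathcal{O}(s)$. To invoke the latter, the first task is to propagate the Fox-Pach separator bound from $G$ itself to all subgraphs of $G$.

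To do so, I would observe that the class of outerstring graphs with maximum degree at most $\Delta$ is hereditary. Indeed, given an outerstring diagram $(\mathcal{C}, D)$ of $G$ and any vertex subset $S \subseteq V(G)$, the restriction of $\mathcal{C}$ to the curves representing vertices in $S$ is an outerstring diagram of the induced subgraph $G[S]$, and the maximum degree bound is inherited trivially. Thus every induced subgraph of $G$ satisfies the Fox-Pach hypothesis and has a balanced separator of size $\mathcal{O}(\Delta)$. Moreover, deleting edges from a graph cannot enlarge its smallest balanced separator (any vertex partition witnessing balancedness in the supergraph continues to do so after edge deletion), so the bound transfers to all (not necessarily induced) subgraphs of $G$. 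Applying the Dvořák-Norin theorem then yields $\tw(G) = \mathcal{O}(\Delta)$.

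Since the proof is essentially a two-step assembly of known black-box results — a fact the excerpt itself acknowledges with its brief attribution — there is no substantive obstacle to overcome. The only genuine content lies in verifying the hereditariness of the outerstring-plus-bounded-degree class, which is immediate from the definitions; the quantitative $\mathcal{O}(\Delta)$ bound is inherited verbatim from \citet{FP-EJC12} and \citet{DN19}.
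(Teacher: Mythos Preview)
Your proposal is correct and matches the paper's approach exactly: the paper also derives the theorem by combining the Fox--Pach balanced separator result with the Dvo\v{r}\'ak--Norin theorem. Your added verification that the class is hereditary (so the separator bound propagates to all subgraphs) is a reasonable detail that the paper leaves implicit.
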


We qualitatively extend\footnote{Unlike the proof of \cref{thm:FPouterstring}, our proof of \cref{outerstringplane} does not require the result of \citet{DN19}.} \cref{thm:FPouterstring} using the following definitions. For an integer $t \geqslant 1$, an \defn{ordered $t$-colouring} of a graph $G$ is a function $\phi : V(G) \rightarrow \{1, \dots, t\}$ such that $\phi(v) \neq \phi(w)$ for each $vw \in E(G)$. Here the integers $1, \dots, t$ are called \defn{colours}. Intuitively, the colours are `ordered' from $1$ to~$t$. We say that $G$ is \defn{$(t, d)$-degenerate} if there exists an ordered $t$-colouring $\phi$ of $G$ such that for any vertex $v \in V(G)$, there are at most $d$ neighbours of $v$ with colour greater than $\phi(v)$. Note that every graph with maximum degree at most $\Delta$ is $(\Delta + 1, \Delta)$-degenerate, and every $(t,d)$-degenerate graph is $d$-degenerate.

\begin{thm} \label{outerstringplane} For each $t \geqslant 1$ and $d \geqslant 0$, every $(t, d)$-degenerate outerstring graph has treewidth at most $(3t - 1)(d + 1) - 1$.
\end{thm}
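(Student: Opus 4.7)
The plan is to build a path decomposition of $G$ by sweeping along $\partial D$. Fix an outerstring diagram $(\mathcal{C},D)$ of $G$ and an ordered $t$-colouring $\phi$ witnessing the $(t,d)$-degeneracy. The key structural observation is that, because $\phi$ is proper, for each colour class $V_c:=\phi^{-1}(c)$ the curves in $V_c$ are pairwise non-crossing; hence, together with $\partial D$, they form a grounded non-crossing family in the exterior of $D$, which has an outerplanar-style laminar structure.

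Cut $\partial D$ at a point disjoint from every curve to linearise the endpoints on $\partial D$, producing a vertex ordering $v_1,\ldots,v_n$. For each $j\in\{0,\ldots,n\}$, let $B_j$ consist of $v_j$ together with every $v_i$ with $i\le j$ that still has a neighbour $v_k$ with $k>j$. The sequence $(B_0,\ldots,B_n)$ is a valid path decomposition by the usual argument: each edge $v_iv_k$ with $i<k$ is covered at $B_k$, and each vertex's bag set is an interval. Thus it suffices to bound $|B_j|$.

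I would bound $|B_j|$ colour by colour. Within each colour $c$, the active vertices $B_j\cap V_c$ form a non-crossing subfamily whose curves ``straddle'' the sweep frontier at position $j$. The outerplanar structure of grounded non-crossing families bounds the number of distinct laminar layers of such a frontier by a small constant (giving the factor $3$), while the $(t,d)$-degeneracy bounds by $d$ the number of higher-colour neighbours ``witnessing'' each active vertex (giving the factor $d+1$). An amortised charging argument then yields $|B_j\cap V_c|\le 3(d+1)$. Summing over the $t$ colours, and applying a saving of $(d+1)$ at the colour of $v_j$ itself — which is counted once as the freshly-activated vertex rather than requiring three laminar layers — produces the target bound $|B_j|\le(3t-1)(d+1)$, i.e.\ treewidth at most $(3t-1)(d+1)-1$.

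The hard part will be establishing the per-colour bound $|B_j\cap V_c|\le 3(d+1)$. This requires simultaneously exploiting (i) the laminar nesting of the grounded non-crossing family $V_c$, and (ii) the up-degree constraint from $(t,d)$-degeneracy. A subtlety is reconciling the geometric sweep order on $\partial D$ with the colour-based order in which the degeneracy is defined, since a curve that is ``later'' in the sweep order need not have a higher colour. I expect to handle this by charging each active $V_c$-curve to its (at most $d$) higher-colour neighbours whose $\partial D$-endpoints lie after position $j$, and using the non-crossing nesting in $V_c$ to show that these charges are disjoint across distinct active $V_c$-curves.
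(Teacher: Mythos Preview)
Your approach constructs a \emph{path} decomposition, so if it succeeded it would bound the pathwidth of $G$ by $(3t-1)(d+1)-1$. This is impossible: $(t,d)$-degenerate outerstring graphs do not have bounded pathwidth. Complete binary trees are outerstring (an explicit grounded representation is easy to draw), have maximum degree~$3$ and hence are $(2,3)$-degenerate, yet their pathwidth is $\Theta(\log n)$. Thus the per-colour bound $|B_j\cap V_c|\le 3(d+1)$ you are aiming for must fail, regardless of how the charging is organised; the obstruction is the shape of the decomposition (path versus tree), not the bookkeeping. Concretely, your charging step ``charge each active $V_c$-curve to its higher-colour neighbours after position $j$'' already breaks down because an active curve may have \emph{no} higher-colour neighbour after position $j$ --- its activity can be witnessed solely by lower-colour neighbours --- and the non-crossing structure within $V_c$ does not prevent arbitrarily many such curves from being simultaneously active.

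For comparison, the paper's proof produces a genuine tree decomposition. It works in the coloured planarisation $\mathcal{C}^{\phi}$: after contracting the grounding disk $D$ to a single vertex one obtains a planar graph $\mathcal{C}^{\phi}_0$ of radius at most $t-1$, and the Robertson--Seymour bound (planar graphs of radius $r$ have treewidth at most $3r+1$) gives $\tw(\mathcal{C}^{\phi}_0)\le 3t-2$. The factor $d+1$ then enters via a model of $G$ in $\mathcal{C}^{\phi}_0\boxtimes K_{d+1}$ coming from the fragment bound, yielding $\tw(G)\le(3t-1)(d+1)-1$. In particular, the constant $3$ arises from the planar-radius theorem rather than from any sweep-line count.
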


\cref{outerstringplane} cannot be strengthened using the standard concept of degeneracy. Indeed, \citet[Lemma~13]{SOX24} showed that the the class of $n$-vertex $2$-degenerate outerstring graphs has treewidth $\Omega(\log n)$. In light of this, \cref{outerstringplane} provides a setting sandwiched between `bounded maximum degree' and `bounded degeneracy', where the treewidth of outerstring graphs is bounded independently of the number of vertices.

We extend \cref{outerstringplane} using the following generalisation of outerstring graphs to arbitrary surfaces introduced by \citet{CDDFGHHWWY24}. Let $\Sigma$ be a surface with Euler genus $g$. A string graph $G$ in $\Sigma$ is called \defn{$(g, c)$-outerstring} if $G$ has a string representation $\mathcal{C}$ and there exist pairwise disjoint closed disks $D_{1}, \dots, D_{c}$ in $\Sigma$ where each curve $\gamma \in \mathcal{C}$ is drawn outside of $D_{1} \cup \dots \cup D_{c}$ and $\gamma$ is grounded on at least one of these disks. A tuple $(\Sigma, \mathcal{C}, \{D_{1}, \dots, D_{c}\})$ is called a \defn{$(g, c)$-outerstring diagram} of $G$. Note
that $(0, 1)$-outerstring graphs are precisely outerstring graphs.

\begin{thm} \label{outerstringsurfaces} For each $t, c \geqslant 1$ and $g, d \geqslant 0$, every $(t, d)$-degenerate $(g, c)$-outerstring graph has treewidth at most $(2t - 1)c(2g + 3)(d + 1) - 1$.
\end{thm}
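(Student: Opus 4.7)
The plan is to reduce the surface case to the planar case of \cref{outerstringplane} via a cutting argument. Let $(\Sigma, \mathcal{C}, \{D_1, \dots, D_c\})$ be a $(g, c)$-outerstring diagram of a $(t, d)$-degenerate graph $G$ with ordered $t$-colouring $\phi$, and set $\Sigma^* := \Sigma \setminus \bigcup_i \Int(D_i)$. By the classification of surfaces with boundary, there is a system $\mathcal{A}$ of pairwise disjoint simple arcs with endpoints on $\partial \Sigma^*$ whose removal planarises $\Sigma^*$ into a topological disk. I would choose $\mathcal{A}$ in general position with respect to $\mathcal{C}$ so that each arc in $\mathcal{A}$ meets the strings of $\mathcal{C}$ transversally in finitely many points.

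Let $\mathcal{C}'$ be the string collection obtained by cutting each string of $\mathcal{C}$ along $\mathcal{A}$, and let $G'$ be its intersection graph. Then $\mathcal{C}'$ is a planar outerstring system grounded on the boundary of the planarised disk, and $G'$ inherits an ordered $t$-colouring from $\phi$ by giving each piece of $s \in \mathcal{C}$ the colour $\phi(s)$. Since neighbours of a piece in $G'$ are always neighbours of the original string $s$ in $G$, and any two pieces of the same string are non-adjacent in $G'$, the graph $G'$ is $(t, d)$-degenerate. A variant of \cref{outerstringplane} handling multiple grounding disks (derivable by merging disks at a controlled cost) then bounds $\tw(G')$ in terms of $t$, $d$, $g$, and $c$.

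The main obstacle is lifting a tree decomposition of $G'$ back to a tree decomposition of $G$. Naively projecting by identifying pieces with their parent strings does not preserve the tree-decomposition property: bags containing different pieces of one string need not form a connected subtree of the decomposition tree. I would address this by exploiting the natural linear order of pieces along each original string and the structure of the cutting system. The key topological claim, which is the heart of the proof, is that $\mathcal{A}$ can be chosen so that each string is split into at most $2g+3$ pieces per disk boundary it touches; augmenting bags to restore connectedness then costs a multiplicative factor of at most $c(2g+3)$. Combined with the factor $(2t-1)(d+1)$ obtained from a $(t, d)$-degenerate analysis on the planar side, this yields the claimed bound $(2t-1)\,c\,(2g+3)(d+1) - 1$.
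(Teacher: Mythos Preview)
Your cutting-and-planarising approach has a genuine gap at the step you flag as the ``key topological claim.'' You assert that the arc system $\mathcal{A}$ can be chosen so that each string is split into at most $2g+3$ pieces (per grounding disk). But the arcs $\mathcal{A}$ are determined by the topology of $\Sigma^{*}$ alone, while the strings in $\mathcal{C}$ are arbitrary curves on the surface: a single string can wind around a handle, or traverse between two boundary components, an unbounded number of times, and will then cross a fixed arc of $\mathcal{A}$ that many times. There is no choice of $\mathcal{A}$, depending only on $(\Sigma, D_1,\dots,D_c)$, that bounds the number of intersection points with every possible string representation. Consequently your $G'$ may have unboundedly many pieces per original string, and the ``augment bags at cost $c(2g+3)$'' step cannot go through. (A secondary symptom: your arithmetic imports the factor $(2t-1)$ from the surface bound rather than the $(3t-1)$ that \cref{outerstringplane} actually gives, which already signals that the planar reduction is not what produces the stated constant.)

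The paper sidesteps this entirely by never cutting the surface. It works directly with the coloured planarisation $\mathcal{C}^{\phi}$, which embeds in $\Sigma$ and hence has Euler genus at most $g$. After contracting each disk boundary to a single vertex $w_i$, \cref{lem:CPLlocaldistance} guarantees that every vertex of the resulting graph $\mathcal{C}^{\phi}_{0}$ is within distance $t-1$ of some $w_i$. The treewidth bound then comes from \cref{lem:radiuskverticesgenus}, which uses the layered-treewidth bound $2g+3$ for genus-$g$ graphs (this is where the factor $(2g+3)$ and the factor $(2t-1)$ originate, the latter being $2(t-1)+1$ layers). Finally, \cref{lem:CPL} exhibits $G$ as a minor of $\mathcal{C}^{\phi}_{0}\boxtimes K_{d+1}$, contributing the factor $(d+1)$. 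No control over the geometric complexity of individual strings is needed.
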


Since every graph with maximum degree at most $\Delta$ is $(\Delta + 1, \Delta)$-degenerate, \cref{outerstringsurfaces} implies the following extension of \cref{thm:FPouterstring} to the setting of $(g, c)$-outerstring graphs.

\begin{cor} \label{intro:outerstringmaxdegree} For each $c \geqslant 1$ and $g \geqslant 0$, every $(g, c)$-outerstring graph with maximum degree at most $\Delta$ has treewidth at most $(2\Delta + 1)(\Delta + 1)c(2g + 3) - 1$.
\end{cor}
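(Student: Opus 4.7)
The plan is to derive this corollary directly from \cref{outerstringsurfaces} by instantiating the degeneracy parameters appropriately. Specifically, I will verify the sentence preceding the corollary, namely that every graph of maximum degree at most $\Delta$ is $(\Delta+1, \Delta)$-degenerate, and then substitute $t = \Delta + 1$ and $d = \Delta$ into the bound supplied by \cref{outerstringsurfaces}.

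To confirm $(\Delta+1, \Delta)$-degeneracy, fix a graph $G$ with $\Delta(G) \le \Delta$ and construct a proper vertex-colouring $\phi : V(G) \to \{1, \dots, \Delta + 1\}$ by a standard greedy procedure: process the vertices in any order and assign each one a colour from $\{1, \dots, \Delta+1\}$ avoided by its already-coloured neighbours (always possible since there are at most $\Delta$ such neighbours). The resulting $\phi$ is a proper ordered $(\Delta+1)$-colouring. Since every vertex $v$ has at most $\Delta$ neighbours in total, in particular at most $\Delta$ of its neighbours have $\phi$-colour strictly greater than $\phi(v)$. Hence $\phi$ witnesses $(\Delta+1, \Delta)$-degeneracy.

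Applying \cref{outerstringsurfaces} with $t = \Delta+1$ and $d = \Delta$ to any $(g, c)$-outerstring graph of maximum degree at most $\Delta$ then gives a treewidth bound of
\[
(2(\Delta+1) - 1)\, c\, (2g + 3)\, (\Delta + 1) - 1 \;=\; (2\Delta + 1)(\Delta + 1)\, c\, (2g+3) - 1,
\]
which is exactly the bound claimed. There is no real obstacle: all of the substantive work is contained in \cref{outerstringsurfaces}, and this corollary merely records the specialisation to the maximum-degree setting, thereby extending \cref{thm:FPouterstring} from the planar case $(g,c)=(0,1)$ to arbitrary surfaces and multiple grounding disks.
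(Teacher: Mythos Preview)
Your proposal is correct and matches the paper's approach exactly: the paper simply notes that every graph with maximum degree at most $\Delta$ is $(\Delta+1,\Delta)$-degenerate and then invokes \cref{outerstringsurfaces} with $t=\Delta+1$ and $d=\Delta$. Your greedy-colouring justification of the degeneracy claim is a fine elaboration of what the paper leaves implicit.
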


\subsection{Localised Representations} \label{subsection:introlocal}

This section describes an alternative proof of \cref{thm:intromain} for string graphs in the plane, and compares \cref{thm:intromain} with existing results about product structure of string graphs.

We first give a background on existing results about string representations of string graphs. For a string representation of a string graph, a \defn{crossing point} (or \defn{crossing}) is a common point of distinct curves. \citet{KM91} famously proved that there exist string graphs in the plane requiring exponentially many crossing points in any string representation. This result highlights the difficulty of analysing string graphs.

\begin{thm} [\citep{KM91}] \label{thm:KM91} For every integer $n \geqslant 2$, there exists a string graph $G$ in the plane on $n$ vertices such that in any string representation $\mathcal{C}$ of $G$ there exists a curve in $\mathcal{C}$ involved in at least $2^{cn}$ crossing points with curves in $\mathcal{C}$, where $c > 0$ is an absolute constant.
\end{thm}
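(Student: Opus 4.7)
The plan is to exhibit, for each integer $n \geqslant 2$, a family of string graphs $G_n$ on $\Theta(n)$ vertices whose abstract adjacency structure forces exponentially many crossings in every string representation. Following the recursive strategy of Kratochv\'il and Matou\v{s}ek, the construction is built from a small ``doubling gadget'' that is chained together $n$ times.

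First I would design a constant-size gadget $H$, together with two designated \emph{interfaces}, i.e.\ ordered sequences of endpoints lying on two disjoint arcs on the boundary of a topological disk, with the following property: every string representation of $H$ inside the disk realises a permutation between the two interfaces whose inversion count is at least twice that of the incoming configuration. Intuitively, $H$ encodes a ``swap'' of two sub-bundles of strings that cannot be realised without each curve of one sub-bundle crossing each curve of the other; the adjacency pattern of $H$ is forced by the Jordan curve theorem to separate certain pairs of endpoints.

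Next I would build $G_n$ by chaining $n$ copies of $H$ inside a disk, identifying the output interface of the $k$-th copy with the input interface of the $(k+1)$-st. The resulting graph has $\Theta(n)$ vertices and its abstract adjacency structure is fixed once and for all. For any string representation $\mathcal{C}$ of $G_n$, define a topological potential $\phi_k(\mathcal{C})$ at the $k$-th interface, namely the number of inversions in the cyclic permutation induced by $\mathcal{C}$ on the ordered endpoints at that interface. The gadget property gives $\phi_{k+1} \geqslant 2\phi_k$ by induction on $k$, and a base case $\phi_1 \geqslant 1$, whence $\phi_n \geqslant 2^{n-1}$. Since $\phi_n$ is a lower bound on the number of crossings among the curves meeting the $n$-th interface, and since each curve appears on a bounded number of interfaces, a simple averaging argument produces a single curve in $\mathcal{C}$ participating in at least $2^{cn}$ crossings, as required.

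The main obstacle is the doubling lemma for the gadget $H$. One must rule out clever representations that route curves through otherwise unused regions of the disk to bypass the expected crossings. This requires a careful planar-topology argument: using the prescribed adjacencies of $H$ to pin down the homotopy class of each curve relative to the interface endpoints, and then applying Jordan-curve separation to force each of the swap crossings. Making this topological forcing precise while keeping $H$ of constant size, so that the final graph $G_n$ has only $\Theta(n)$ vertices, is the delicate heart of the construction.
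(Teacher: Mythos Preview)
The paper does not contain a proof of this statement. Theorem~\ref{thm:KM91} is quoted from \cite{KM91} purely as background: it is used to motivate the study of string representations with controlled numbers of crossings and, later, to argue that the bound in Corollary~\ref{intro:boundeddegree} cannot be improved to a subexponential function of~$\Delta$. There is therefore no ``paper's own proof'' to compare your proposal against.

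As for the proposal itself, it is a plausible high-level outline of the Kratochv\'il--Matou\v{s}ek strategy, but in its present form it is only a plan, not a proof. You explicitly identify the crux --- designing a constant-size gadget $H$ for which every realisation inside a disk at least doubles the inversion count between the two interfaces --- and you correctly flag that ruling out ``clever'' representations that bypass the expected crossings is the delicate point. That is exactly where all the work lies, and your sketch does not carry it out: you assert the existence of such an $H$ and the doubling inequality $\phi_{k+1}\geqslant 2\phi_k$ without exhibiting $H$ or proving the inequality. Until the gadget is specified and the topological forcing argument is written down, the proposal remains a description of what a proof would look like rather than a proof. If you want to complete it, you will need to give the explicit adjacency pattern of $H$, fix a precise definition of the interface permutation and its inversion count, and prove the doubling lemma via a Jordan-curve argument; the original paper \cite{KM91} does exactly this.
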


Solving the long-standing open problem about the algorithmic decidability of string graphs \citep{Benzer59,Sinden66,G76,EET76}, \citet{SS-JCSS04}\footnote{Shortly before the work of \citet{SS-JCSS04}, \citet{PachToth-DCG02} claimed to prove a variant of \cref{Intro:SS}, in which the total number of crossing points in a string representation of a string graph $G$ is bounded by a function of $|V(G)|$. \citet*{KSSS24} discovered a serious gap in their proof which cannot be easily fixed.} proved that every string graph $G$ in the plane has a string representation 
in which the total number of crossing points
is bounded by a function of~$|E(G)|$. We state this result with a bound given by \citet[Theorem~9.17]{Schaefer18}, who slightly refined the argument of \citet{SS-JCSS04}.

\begin{thm} [\citep{Schaefer18,SS-JCSS04}] \label{Intro:SS} Every string graph in the plane with $m$ edges has a string representation with at most $2^m \cdot m^2$ crossing points.
\end{thm}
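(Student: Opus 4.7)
The plan is to start with a string representation of $G$ that minimises the total number of crossings, and to show that in such a representation no two curves can cross too many times. Fix such a minimum representation $\mathcal{C} = \{\alpha_v : v \in V(G)\}$ and an edge $uv \in E(G)$, and let $p_1, \dots, p_N$ be the crossings of $\alpha_u$ with $\alpha_v$ listed in order along $\alpha_u$. The goal is to bound $N$ by roughly $2^m \cdot m$; summing this per-edge bound over the $m$ edges of $G$ then yields the claimed total-crossing bound of $2^m \cdot m^2$.

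To each crossing $p_i$ I would attach a \emph{signature} in $\{0, 1\}^W$, where $W \subseteq V(G) \setminus \{u, v\}$ consists of the vertices $w$ such that $\alpha_w$ crosses $\alpha_u$ or $\alpha_v$. Non-neighbours of $u$ and $v$ contribute nothing, so $|W| \leqslant \deg(u) + \deg(v) \leqslant 2m$, giving at most $2^{\mathcal{O}(m)}$ distinct signatures. The $w$-coordinate of the signature at $p_i$ records the parity of the number of times $\alpha_w$ crosses the initial segment $\alpha_u[p_1, p_i]$ (and, in a refined version, the analogous parity for $\alpha_v$, accounting for the polynomial slack in the final bound). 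If $N$ is large enough, the pigeonhole principle produces two indices $i < j$ whose signatures agree.

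Equal signatures mean that each $\alpha_w$ with $w \in W$ crosses the sub-arcs $\alpha_u[p_i, p_j]$ and $\alpha_v[p_i, p_j]$ the same number of times modulo $2$. Since these two arcs share the endpoints $p_i$ and $p_j$, they jointly form a closed curve. I then perform a \emph{swap} redrawing: exchange $\alpha_u[p_i, p_j]$ and $\alpha_v[p_i, p_j]$ and smooth away the tangencies created at $p_i$ and $p_j$. This eliminates the crossings $p_i$ and $p_j$ between $\alpha_u$ and $\alpha_v$, and the parity-matching ensures that, after swapping, each other curve $\alpha_w$ still crosses the new $\alpha_u$ and the new $\alpha_v$ correctly modulo $2$; any new pair of consecutive intersections with a third curve is locally reducible. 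The resulting string representation has strictly fewer total crossings than $\mathcal{C}$, contradicting minimality.

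The main obstacle will be making the swap rigorous. Naively exchanging the two arcs can introduce self-intersections of the resulting $\alpha_u$ or $\alpha_v$ and create new pairs of crossings with third curves that are not obviously reducible in a single step. Handling this requires iterating the reduction: at each step one either strictly reduces the total number of crossings, or else identifies a strictly smaller sub-configuration to which the same argument applies. A secondary difficulty is to strengthen the signature just enough that the swap is neutral with respect to \emph{every} other curve simultaneously, not only on average; this refinement is what yields the single-exponential factor $2^m$ and the additional polynomial factor $m^2$ appearing in the final statement.
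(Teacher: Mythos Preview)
The paper does not prove this theorem directly; it is quoted from \citep{Schaefer18,SS-JCSS04}. The paper's own contribution is the refinement \cref{main:localised}, and that proof reveals the intended route for \cref{Intro:SS} as well: first reduce to a \emph{weak realisation} instance by building an auxiliary graph $H$ with $V(H)=E(G)$, placing one vertex at a chosen crossing $x_{uv}$ per edge $uv\in E(G)$ and letting the edges of $H$ be the subarcs of each $\gamma_u$ between consecutive chosen crossings; then apply \cref{lem:weakrealisation} as a black box to $(H,R)$. All the minimality and redrawing work (which the paper describes as a ``redrawing technique based on circular inversions'') happens on the edges of $H$, not on the strings themselves.

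Your plan skips this reduction and runs a swap argument directly on the strings, and this creates a gap that parity signatures cannot close. Suppose $uw\notin E(G)$ but $vw\in E(G)$. Then $\alpha_w$ is disjoint from $\alpha_u$ but may cross the arc $\alpha_v[p_i,p_j]$ a nonzero even number of times; your signature only records parity, so it cannot distinguish this from zero. After the swap the new $\alpha_u$ contains $\alpha_v[p_i,p_j]$, so $\alpha_w$ now meets the new $\alpha_u$: you have manufactured the non-edge $uw$, and what you hold is no longer a string representation of $G$. Symmetrically, a required adjacency can be lost if all crossings of $\alpha_w$ with $\alpha_u$ lay inside $[p_i,p_j]$ and $\alpha_w$ misses $\alpha_v[p_i,p_j]$ entirely. ``Iterating the reduction'' does not obviously help: removing the spurious even crossings between the new $\alpha_u$ and $\alpha_w$ requires further rerouting that may create new unwanted incidences with yet other curves, with no evident termination measure. (The self-intersection issue you flag is also real---the $p_k$'s are ordered differently along $\alpha_v$, so $\alpha_v[p_i,p_j]$ may contain some $p_k$ with $k<i$ or $k>j$, forcing the new $\alpha_u$ through $p_k$ twice---but the adjacency issue is the more fundamental obstruction.)

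The weak-realisation reduction is precisely what dissolves this problem. Encoding ``$\gamma_u$ meets $\gamma_v$'' as a \emph{vertex} $x_{uv}$ of $H$ makes every required adjacency indestructible under any redrawing of $H$; encoding ``$\gamma_u$ misses $\gamma_w$'' as the absence of certain pairs from $R$ turns each non-adjacency into a constraint that a minimality/parity argument \emph{can} enforce. If you first perform this reduction and then run your pigeonhole-and-swap on edges of $H$, you are essentially reproving \cref{lem:weakrealisation}---which is the paper's route, not a direct argument on $\mathcal{C}$.
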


The main idea of the proof of \cref{Intro:SS} is to rephrase the problem in terms of drawings of graphs in the plane and apply a redrawing technique based on circular inversions. This approach makes essential use of properties of the plane and does not seem to generalise easily to other surfaces~\citep{SSS-JCSS03}. To tackle this issue, \citet*{SSS-JCSS03} rephrased the problem in terms of the solvability of word equations over trace monoids with involutions. They used this algebraic approach to prove the following qualitative generalisation of \cref{Intro:SS}.

\begin{thm} [\citep{SSS-JCSS03}] \label{intro:surfacesSSS} Every string graph in a compact surface $\Sigma$ with $n$ vertices has a string representation in $\Sigma$ with $\mathcal{O}_{\Sigma}(2^{2^{\poly(n)}})$ crossing points.    
\end{thm}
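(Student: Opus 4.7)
The plan is to follow the algebraic approach indicated in the excerpt: reduce the existence of a low-crossing string representation to the solvability of a system of word equations over a trace monoid with involution, and then appeal to known upper bounds for the length of a shortest solution of such a system. The key point is that, unlike the inversion-based redrawing argument used for the plane, this approach is intrinsically combinatorial and only uses that $\Sigma$ admits a polygonal representation of bounded complexity.

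First, I would fix a polygonal representation of $\Sigma$: cut $\Sigma$ along a system $\Gamma$ of $\mathcal{O}_{\Sigma}(1)$ simple closed curves so that $\Sigma \setminus \Gamma$ is an open disk, and identify $\Sigma$ with a $2g$-gon $P$ whose boundary edges are identified in pairs according to $\Sigma$. Given any string representation of $G$, after a generic small perturbation each curve $\gamma_v$ meets $\Gamma$ transversally and in finitely many points; thus $\gamma_v$ decomposes into a finite sequence of arcs in $P$. I would encode $\gamma_v$ as a word $w_v$ over an alphabet whose letters record (i) the boundary edge of $P$ that is crossed at each transition, together with the two side-by-side copies corresponding to the identification (this is where the involution enters), and (ii) a symbol for each crossing of $\gamma_v$ with another curve $\gamma_u$. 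Commutation relations on these letters encode which local moves of the curves are topologically trivial inside $P$ (e.g.\ swapping two crossings that occur in disjoint regions).

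Next, I would translate the condition ``the words $(w_v)_{v \in V(G)}$ describe a valid string representation of $G$ in $\Sigma$'' into a system of equations and disequations in the trace monoid. Adjacency $uv \in E(G)$ is captured by requiring the crossing letter for the pair $\{u,v\}$ to appear an odd number of times in $w_u$ (equivalently, by a simple existential equation), while non-adjacency $uv \notin E(G)$ is captured by requiring that no such crossing letter appears, which after applying the trace-monoid normalisation is a finite disjunction of equations. Boundary consistency (that the crossings appear in a compatible order along the two sides of each identified edge of $P$) is another finite set of equations in the involution. The resulting system has size $\poly(n)$ because $G$ has $\mathcal{O}(n^2)$ pairs. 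By the Matiyasevich--Plandowski--Diekert type bounds for solvability of word equations over trace monoids with involutions, if this system has any solution, then it has one in which every $w_v$ has length $\mathcal{O}_{\Sigma}(2^{2^{\poly(n)}})$. Converting a short solution back into curves in $P$ and gluing $P$ into $\Sigma$ produces a string representation of $G$ whose total number of crossings is bounded by the total length of the $w_v$, which yields the desired bound.

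The main technical obstacle is the translation step: ensuring that the trace-monoid system faithfully captures the topology of string representations in $\Sigma$, and in particular that \emph{non-adjacencies} (which are negative topological conditions of the form ``these two curves can be simultaneously drawn without crossing'') can be encoded by positive existential statements in the trace monoid. Properly handling the involution (so that identified edges of $P$ match with the correct orientation) and proving that any solution of bounded length can be realised by actual curves in general position is the most delicate part. Once this encoding is in place, the double-exponential bound comes directly from the quoted solvability bound for trace-monoid word equations, and everything else is routine topological bookkeeping.
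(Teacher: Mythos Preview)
This theorem is not proved in the paper; it is quoted from \citet{SSS-JCSS03}, and the paper only summarises their method in one sentence (``rephrased the problem in terms of the solvability of word equations over trace monoids with involutions''). Your plan follows exactly that summary, so at the level of strategy there is nothing to compare: you are reconstructing the cited proof rather than an argument of the present paper.

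That said, your sketch of the encoding has real gaps that would need to be addressed before it could be called a proof. The adjacency condition is not ``the crossing letter appears an odd number of times'' --- two curves that cross twice are still adjacent in the string graph --- and more seriously, the non-adjacency condition cannot be handled by simply forbidding a crossing letter. The actual reduction in \citet{SSS-JCSS03} goes through \emph{weak realisability} (a graph $H$ together with a set $R$ of pairs of edges allowed to cross), which is equivalent to string-graph recognition via an auxiliary construction; only after this translation does one obtain a clean positive existential statement amenable to the trace-monoid machinery. Your proposal skips this intermediate step and tries to encode the string representation directly, which is why the handling of non-adjacencies looks ad hoc. You correctly flag the encoding as the main obstacle, but the specific encoding you describe would not work without the weak-realisability detour.
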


A string graph $G$ is a \defn{$(g, \delta)$-string} graph~\citep{DMW23,HW24} if there exists a string representation of $G$ in a surface with Euler genus $g$ where each curve is involved in at most $\delta$ crossing points. A \defn{$\delta$-string} graph is defined analogously for string graphs in the plane. So $\delta$-string graphs are exactly $(0, \delta)$-string graphs. Building on the work of \citet*{DMW23}, \citet{HW24} proved the following product structure theorem for $(g, \delta)$-string graphs.

\begin{thm} [\citep{HW24}] \label{gdStringGPST}
	Every $(g, \delta)$-string graph $G$ is contained in $ H\boxtimes P\boxtimes K_{2\max\{2g,3\}(\delta+1)^2}$ for some graph $H$ with treewidth at most $\binom{2 \floor{\delta/2}+4}{3}-1$ and for some path $P$, and thus $\rtw(G) \leqslant 2\max\{2g, 3\}(\delta+1)^2\binom{2 \floor{\delta/2}+4}{3} - 1$.
\end{thm}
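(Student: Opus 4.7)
The plan is to reduce \cref{gdStringGPST} to the graph product structure theorem for graphs of bounded Euler genus. Fix a $(g, \delta)$-string representation of $G$ in a surface $\Sigma$ of Euler genus $g$, and form the \emph{planarisation} $\tilde G$ whose vertex set is the set of crossing points and whose edges join crossings that are consecutive along some curve. Then $\tilde G$ admits a crossing-free drawing in $\Sigma$, so $\tilde G$ has Euler genus at most $g$. Each $v \in V(G)$ corresponds to a path $P_v \subseteq \tilde G$ on at most $\delta$ vertices, and $uv \in E(G)$ if and only if $V(P_u) \cap V(P_v) \neq \emptyset$. The Euler-genus product structure theorem then gives $\tilde G \subseteq H' \boxtimes P \boxtimes K_{\max\{2g, 3\}}$ for some path $P$ and some graph $H'$ with $\tw(H') \leq 3$.

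Next I would bundle each $P_v$ into a single vertex of a new product. Fix an embedding $\phi \colon V(\tilde G) \hookrightarrow V(H') \times V(P) \times V(K_{\max\{2g, 3\}})$, and let $r_v$ be the vertex of $P_v$ whose image under $\phi$ has minimum $P$-coordinate $p_v$. Because $P_v$ is connected on at most $\delta$ vertices, the $P$-coordinates of $\phi(V(P_v))$ fill an interval of length at most $\delta - 1$, and the $H'$-projection of $\phi(V(P_v))$ is connected of size at most $\delta$. Assign $v$ the $P$-coordinate $p_v$ and attach to it the pair $(h(r_v), \sigma_v)$, where $\sigma_v$ records the at most $\delta$ relative offsets of $\phi(V(P_v))$ from $\phi(r_v)$. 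For an edge $uv \in E(G)$ realised by a common vertex $z \in V(P_u) \cap V(P_v)$, tracing a walk $r_u \to z \to r_v$ inside $\tilde G$ forces $|p_u - p_v| \leq \delta$ and $\dist_{H'}(h(r_u), h(r_v)) \leq \delta$. These two slacks are absorbed by thickening the clique factor: a factor of $(\delta + 1)$ for the $P$-direction slack, a factor of $(\delta + 1)$ for the multiplicity of shapes sharing an anchor, and a factor of $2$ to separate parities of $p_v$, giving the claimed $K$-factor of size $2\max\{2g, 3\}(\delta + 1)^2$.

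To bound $\tw(H)$ I would build a tree decomposition of $H$ from a width-$3$ tree decomposition $(T, \mathcal{B})$ of $H'$: each bag of $\mathcal{B}$ has size $4$, and its closed $\floor{\delta/2}$-neighbourhood in the tree $T$ spans at most $2\floor{\delta/2} + 4$ vertices of $H'$. The expanded bag of $H$ at this node of $T$ then contains at most $\binom{2\floor{\delta/2}+4}{3}$ configurations $(h, \sigma)$, the $\binom{\cdot}{3}$ reflecting the combinatorial choices that a width-$3$ host imposes on the rooted shape $\sigma_v$ of size up to $\delta$. Hence $\tw(H) \leq \binom{2\floor{\delta/2}+4}{3} - 1$, and combining with the $K$-factor above yields the stated bound on $\rtw(G)$.

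The main obstacle is the bundling step: the shape information must be split between the $H$-factor (contributing to the $\binom{\cdot}{3}$ treewidth bound) and the $K$-factor (contributing to the $(\delta+1)^2$ clique factor) in exactly the right proportion. Recording all of the shape data in either factor alone blows up the corresponding bound; the judicious split --- ``$P$-direction slack into $K$'' versus ``$H'$-radius into $H$'' --- together with choosing the representative $r_v$ at the minimum $P$-coordinate, is what makes both bounds polynomial of the claimed degree.
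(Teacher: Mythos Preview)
This theorem is quoted from \cite{HW24}; the present paper does not give its own proof, so there is nothing to compare against directly. That said, your outline contains a genuine gap that would prevent it from going through as written.

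The construction of $H$ and its treewidth bound are not correct. You write that for a bag of the width-$3$ tree decomposition $(T,\mathcal{B})$ of $H'$, ``its closed $\floor{\delta/2}$-neighbourhood in the tree $T$ spans at most $2\floor{\delta/2}+4$ vertices of $H'$''. This is false: the $r$-neighbourhood of a node in a tree $T$ can contain arbitrarily many nodes when $T$ has high degree, and the union of the corresponding bags can be arbitrarily large. You appear to be treating $T$ as a path. Consequently the claim that the expanded bag has size at most $\binom{2\floor{\delta/2}+4}{3}$ is unsupported, and the hand-wave ``the $\binom{\cdot}{3}$ reflecting the combinatorial choices that a width-$3$ host imposes on the rooted shape'' does not explain where this particular binomial coefficient comes from. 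More fundamentally, you never actually define the vertex set or edge set of $H$: the pair $(h(r_v),\sigma_v)$ is attached to $v$, but it is not stated what graph these pairs live in, nor why adjacency in $G$ forces adjacency or equality in that graph.

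The overall plan --- planarise, apply the genus product-structure theorem to the planarisation, then lift each bounded-length path $P_v$ back to a single vertex --- is indeed the route taken in \cite{DMW23,HW24}. But the lifting step there is not an ad hoc ``shape encoding''; it goes through a general theorem about shortcut systems (or, in \cite{HW24}, about powers with respect to an $(A,B,C)$-layered partition), and the binomial coefficient $\binom{2\floor{\delta/2}+4}{3}$ arises from counting tripods in a width-$3$ tree decomposition whose bags are enlarged along vertical paths of bounded length, not from counting vertices in a neighbourhood of $T$. To repair your argument you would need either to reproduce that machinery or to cite it; the bundling sketch as it stands does not produce a well-defined $H$, let alone one of the claimed treewidth.
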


\cref{thm:intromain} extends \cref{gdStringGPST}. Indeed, every $(g, \delta)$-string graph has maximum degree at most $\delta$. It is conceivable that string graphs in a surface with Euler genus $g$ with maximum degree at most $\Delta$ are not $(g, f(\Delta))$-string for any fixed function $f$ because two curves might cross arbitrarily many times. However, we show that this is not the case for string graphs in the plane. Specifically, we prove the following extension of \cref{Intro:SS}.

\begin{restatable}{thm}{intromainlocalised} \label{main:localised} For each string graph $G$ in the plane, there exists a string representation $\mathcal{C}$ of $G$ in the plane such that for each $u \in V(G)$, the curve in $\mathcal{C}$ representing $u$ is involved in at most $2^{d_{G}(u)}(d_{G}(u) - 1) + 1$ crossing points with other curves in $\mathcal{C}$.
\end{restatable}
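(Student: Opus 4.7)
The proof strategy combines a single-vertex redrawing lemma with the observation that a minimum-crossings representation automatically satisfies a per-vertex bound. The key local lemma is: given any string representation $\mathcal{C}$ of $G$ and any vertex $u \in V(G)$, there exists another string representation $\mathcal{C}'$ of $G$, differing from $\mathcal{C}$ only in the curve representing $u$, in which the new curve $C'_u$ has at most $2^{d_G(u)}(d_G(u) - 1) + 1$ crossing points with the remaining curves. Assuming this lemma, the theorem is immediate: take a string representation $\mathcal{C}^*$ of $G$ minimising the total number of crossing points (existence is guaranteed by \cref{Intro:SS} together with the integrality of crossing counts), and observe that if some vertex $u$ violated the stated bound in $\mathcal{C}^*$, applying the lemma to $u$ would strictly decrease the total number of crossings, contradicting minimality.

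To establish the single-vertex lemma, I would localise the Schaefer--Stefankovic argument behind \cref{Intro:SS} to a region around $C_u$. Define $R_u$ to be the connected component of $\mathbb{R}^2 \setminus \bigcup_{v \notin N_G[u]} C_v$ containing $C_u$. The replacement curve $C'_u$ must lie entirely in $R_u$ and must cross each curve $C_v$ (for $v \in N_G(u)$) in at least one point. By compactifying $R_u$ into a sphere, gluing a disk into each boundary component, and then contracting, for each $v \in N_G(u)$, the finitely many pieces of $C_v \cap R_u$ down to a single curve, the task becomes that of representing a $d_G(u)$-edge star on a sphere subject to prescribed intersection constraints with a fixed family of curves. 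Applying \cref{Intro:SS} to this star with $m = d_G(u)$ edges gives a representation with at most $2^{d_G(u)} d_G(u)^2$ crossings; a refinement that accounts for the one unavoidable crossing contributed by each neighbour, and re-examines the Schaefer--Stefankovic construction on the remaining ``excess'' portion, improves this to $2^{d_G(u)}(d_G(u) - 1) + 1$. The resulting star representation is then pulled back to $R_u$ and thence to the plane to yield the desired curve $C'_u$.

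The main obstacle is executing the local reduction cleanly. The region $R_u$ can have arbitrarily many boundary components, and a single neighbour curve $C_v$ may intersect $R_u$ in many disjoint pieces, so the compactification-and-contraction step requires care to preserve the relevant topology without spuriously creating or destroying intersection constraints. A secondary technical point is the tightening from the direct Schaefer--Stefankovic bound of $2^{d_G(u)} d_G(u)^2$ to the stated $2^{d_G(u)}(d_G(u) - 1) + 1$, which requires revisiting the word-equation analysis in the restricted star setting rather than invoking \cref{Intro:SS} as a black box.
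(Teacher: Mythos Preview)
Your overall strategy---prove a single-vertex redrawing lemma and then invoke minimality of a crossing-minimum representation---is a reasonable plan, but the proof sketch for the single-vertex lemma has a genuine gap. Once you freeze every curve except $C_u$, you are no longer solving a string-representation problem: you must route a single new curve $C'_u$ through the \emph{fixed} arrangement of the neighbour curves $C_v$ ($v\in N_G(u)$) inside $R_u$. Invoking \cref{Intro:SS} on ``a $d_G(u)$-edge star'' does not address this, because \cref{Intro:SS} produces a representation from scratch and is free to redraw the neighbour curves as well; the curve for $u$ it outputs need not behave well against your original, unmoved $C_v$'s. Your compactification-and-contraction step does not bridge the gap either: collapsing the several pieces of $C_v\cap R_u$ to a single curve alters the very arrangement through which $C'_u$ must be threaded, and you give no mechanism for pulling a curve found in the contracted picture back to one that meets the uncontracted $C_v$'s with the same crossing count. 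Finally, even granting all of this you would obtain only $2^{d_G(u)}d_G(u)^2$, and the sharpening to $2^{d_G(u)}(d_G(u)-1)+1$ by ``re-examining the word-equation analysis'' is a promissory note, not an argument.

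The paper takes a different and more direct route that sidesteps the fixed-background difficulty. It never attempts to redraw one curve against frozen neighbours. Instead it picks one witness crossing $x_{uv}$ per edge $uv\in E(G)$, builds an auxiliary graph $H$ whose vertices are these witnesses and whose edges are the arcs of each $\gamma_u$ between consecutive witnesses, and records which edge-pairs cross as $R$. It then applies the \emph{per-edge} Schaefer--Stefankovi\v{c} bound, \cref{lem:weakrealisation}, which redraws \emph{all} of $H$ simultaneously so that each edge $e$ sits in at most $2^{\delta_e}-1$ crossings. With $\delta_e\le d_G(u)$ for each of the $d_G(u)-1$ arcs along $\gamma_u$, plus the $d_G(u)$ witness points themselves, the count $(2^{d_G(u)}-1)(d_G(u)-1)+d_G(u)=2^{d_G(u)}(d_G(u)-1)+1$ drops out immediately. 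The essential difference is that the paper reduces to a single global weak-realisation instance and redraws everything at once, rather than trying to hold the background rigid while moving one curve.
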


For a string graph $G$ in the plane, \cref{Intro:SS} provides a bound on the number of crossing points on a curve of a string representation representing a vertex $u \in V(G)$ as a function of $|E(G)|$, which is a `global' parameter. The bound given by \cref{main:localised} is determined by a function of $d_{G}(u)$, which is a `local' parameter. In this sense, \cref{main:localised} shows that string graphs in the plane admit \textit{localised} string representations. Our proof of \cref{main:localised} is a refinement of the proof of \cref{Intro:SS} by \citet[Theorem~9.17]{Schaefer18}. \cref{main:localised} implies the following connection between string graphs in the plane with bounded maximum degree and $\delta$-string graphs.

\begin{cor} \label{intro:boundeddegree} Every string graph in the plane with maximum degree at most $\Delta$ is~${(2^{\Delta}(\Delta - 1) + 1)}$-string. 
\end{cor}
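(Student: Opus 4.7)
The plan is to derive \cref{intro:boundeddegree} as an immediate consequence of \cref{main:localised}, using only the observation that the bound $2^{d_G(u)}(d_G(u)-1)+1$ is monotone non-decreasing in $d_G(u)$.

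First I would apply \cref{main:localised} to the given string graph $G$ in the plane. This produces a string representation $\mathcal{C}$ of $G$ in the plane with the property that, for each vertex $u \in V(G)$, the curve in $\mathcal{C}$ representing $u$ is involved in at most $2^{d_G(u)}(d_G(u)-1)+1$ crossing points with the other curves of $\mathcal{C}$.

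Next I would invoke the hypothesis that $G$ has maximum degree at most $\Delta$, so that $d_G(u) \leqslant \Delta$ for every $u \in V(G)$. The real (and only) step is to observe that the function $\phi : \mathbb{N} \to \mathbb{N}$ defined by $\phi(x) = 2^x(x-1)+1$ satisfies $\phi(x) \leqslant \phi(\Delta)$ for every $0 \leqslant x \leqslant \Delta$; this is a one-line check since $\phi$ is non-decreasing on the non-negative integers (with $\phi(0)=0$, $\phi(1)=1$, and $\phi(x+1)-\phi(x) = 2^x(x+1) > 0$ for $x \geqslant 1$). Consequently, in the representation $\mathcal{C}$, every curve is involved in at most $2^\Delta(\Delta-1)+1$ crossing points with other curves.

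By definition, this says exactly that $G$ admits a string representation in the plane in which each curve participates in at most $2^\Delta(\Delta-1)+1$ crossings, i.e.\ $G$ is a $(2^\Delta(\Delta-1)+1)$-string graph. There is no real obstacle here; the whole content of the statement is packed into \cref{main:localised}, and the corollary is obtained by taking the worst case of the local bound over vertices.
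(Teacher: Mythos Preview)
Your proposal is correct and matches the paper's approach exactly: the paper simply states that \cref{main:localised} implies \cref{intro:boundeddegree} without giving a separate proof, treating it as immediate. Your argument spells out the obvious monotonicity step, which is all that is needed.
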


The bound $2^{\Delta}(\Delta - 1) + 1$ in \cref{intro:boundeddegree} cannot be improved to a subexponential function of~$\Delta$. Indeed, since every graph on $n$ vertices has maximum degree at most $n - 1$, \cref{thm:KM91} implies that there exists an absolute constant $c > 0$ such that for every positive integer $\Delta$ there exists a string graph with maximum degree $\Delta$ that is not $\delta$-string for any $\delta < 2^{c\Delta}$.

Applying \cref{intro:boundeddegree} with \cref{gdStringGPST}, we obtain the following product structure theorem for string graphs in the plane with bounded maximum degree. This provides an alternative proof of \cref{thm:intromain} for string graphs in the plane.

\begin{thm} \label{thm:PSmaxdegree}
	Every string graph $G$ in the plane with maximum degree at most $\Delta$ is contained in $H\boxtimes P\boxtimes K_{6(\delta+1)^2}$ for some graph $H$ with treewidth at most $\binom{2 \floor{\delta/2}+4}{3}-1$ and for some path $P$, and thus $\rtw(G) \leqslant 6(\delta+1)^2\binom{2 \floor{\delta/2}+4}{3}-1$, where $\delta := 2^{\Delta}(\Delta - 1) + 1$.
\end{thm}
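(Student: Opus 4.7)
The plan is to deduce \cref{thm:PSmaxdegree} by directly composing the two ingredients already stated in the excerpt: the localised representation result \cref{intro:boundeddegree} (which itself is a corollary of \cref{main:localised}), and the product structure theorem for $(g,\delta)$-string graphs, \cref{gdStringGPST}. Essentially no new combinatorial work is required; the proof is a one-line substitution followed by standard treewidth-of-product manipulations.

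First I would let $G$ be a string graph in the plane with maximum degree at most $\Delta$, and set $\delta := 2^{\Delta}(\Delta-1)+1$. Applying \cref{intro:boundeddegree} directly, $G$ admits a string representation in the plane in which every curve is involved in at most $\delta$ crossing points; that is, $G$ is a $\delta$-string graph, which by definition coincides with being a $(0,\delta)$-string graph. I would then feed this representation into \cref{gdStringGPST} with $g=0$. Since $\max\{2g,3\}=3$ when $g=0$, the theorem gives that $G$ is contained in $H \boxtimes P \boxtimes K_{6(\delta+1)^2}$ for some path $P$ and some graph $H$ with $\tw(H) \leqslant \binom{2\floor{\delta/2}+4}{3}-1$, which is exactly the first claim.

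For the row-treewidth bound, I would absorb the clique factor into $H$: the strong product $H \boxtimes K_{6(\delta+1)^2}$ is the blow-up of $H$ obtained by replacing each vertex with a clique of size $6(\delta+1)^2$ and making every inter-bag edge present exactly when the original vertices are equal or adjacent. A standard fact (obtained by replacing each vertex in a tree decomposition of $H$ by its $6(\delta+1)^2$ copies) gives
\[
\tw(H \boxtimes K_{6(\delta+1)^2}) \leqslant 6(\delta+1)^2\bigl(\tw(H)+1\bigr)-1 \leqslant 6(\delta+1)^2\binom{2\floor{\delta/2}+4}{3}-1.
\]
Rewriting $G \subseteq H \boxtimes P \boxtimes K_{6(\delta+1)^2} = \bigl(H \boxtimes K_{6(\delta+1)^2}\bigr) \boxtimes P$ then gives the claimed bound on $\rtw(G)$ by definition of row treewidth.

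There is no real obstacle here: the only care needed is bookkeeping with the constant factors. In particular, one must verify (i) that the constant $\max\{2g,3\}$ from \cref{gdStringGPST} truly equals $3$ for $g=0$ so the clique factor becomes $K_{6(\delta+1)^2}$ rather than something larger, and (ii) that the standard blow-up bound $\tw(H\boxtimes K_k) \leqslant k(\tw(H)+1)-1$ is applied correctly so the final expression matches $6(\delta+1)^2\binom{2\floor{\delta/2}+4}{3}-1$. Once these two routine checks are made, the theorem follows immediately.
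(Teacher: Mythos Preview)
Your proposal is correct and matches the paper's approach exactly: the paper presents \cref{thm:PSmaxdegree} simply as the result of ``applying \cref{intro:boundeddegree} with \cref{gdStringGPST}'', and your write-up correctly fills in the routine constant-checking (including that the row-treewidth conclusion is already built into the statement of \cref{gdStringGPST}, so your blow-up computation is just reproving that clause).
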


We conjecture that string graphs on surfaces also admit localised string representations.

\begin{conj} \label{intro:conj} There exists a function $f$ such that for every surface $\Sigma$ with Euler genus $g$ and every string graph $G$ in $\Sigma$, there exists a string representation $\mathcal{C}$ of $G$ in $\Sigma$ such that for each $u \in V(G)$, the curve in $\mathcal{C}$ representing $u$ is involved in at most $f(d_{G}(u), g)$ crossings.
\end{conj}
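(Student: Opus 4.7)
The plan is to adapt the proof of Theorem \ref{main:localised} to the surface setting by combining it with the global surface bound of Theorem \ref{intro:surfacesSSS} and a cutting-and-pasting argument, rather than to try to deduce it from the product structure of Theorem \ref{thm:intromain} (which does not directly furnish string representations).

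First, I would invoke Theorem \ref{intro:surfacesSSS} to obtain \emph{some} string representation $\mathcal{C}_0$ of $G$ in $\Sigma$ with finitely many crossings; this gives a concrete combinatorial object to manipulate. I would then fix a system $\Gamma$ of $O(g)$ pairwise-disjoint simple closed curves on $\Sigma$ whose complement is an open disk (a fundamental-polygon decomposition). Using standard surface bigon-removal, I would homotope each curve $C(u) \in \mathcal{C}_0$ to a minimal position with respect to $\Gamma$, without introducing avoidable crossings with the other curves of $\mathcal{C}_0$.

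Next, I would cut $\Sigma$ along $\Gamma$, obtaining a representation of $G$ inside a closed disk $D$ with boundary edges identified in pairs. Each curve $C(u)$ then decomposes into a union of arcs grounded on $\partial D$, giving an outerstring-type configuration. I would then apply a variant of the redrawing technique of Theorem \ref{main:localised} inside $D$, using Theorem \ref{outerstringplane} to control the outerstring combinatorics, to produce a new configuration where each arc participates in few crossings while preserving the boundary identifications. Gluing back along $\Gamma$ would give a string representation of $G$ in $\Sigma$ in which the curve of $u$ has at most $f(d_G(u), g)$ crossings.

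The hard part is the last step. Schaefer's proof of Theorem \ref{Intro:SS} in the plane repeatedly applies a circular inversion centred at a crossing of two curves that cross more than twice, removing a pair of crossings; this is an ambient homeomorphism of the plane, but inside $D$ it would move boundary points and thereby destroy the identifications on $\Gamma$, so that the curves would no longer glue back into a representation on $\Sigma$. A genuine substitute must perform local redrawings that respect the identifications, which is equivalent to rerouting through the handles of $\Sigma$ in a way that strictly decreases the crossing count on a high-crossing curve, with the decrease bounded in terms of $d_G(u)$ and $g$ rather than $|V(G)|$. Designing such moves appears to require genuinely new ideas beyond bigon-removal and circular inversion, and is presumably why the author poses this as Conjecture \ref{intro:conj} and circumvents it in Theorem \ref{thm:intromain} via coloured planarisations.
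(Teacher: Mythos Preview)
The statement you are addressing is a \emph{conjecture}, not a theorem: the paper does not prove it and explicitly leaves it open. So there is no ``paper's own proof'' to compare against. You yourself recognise this in your final paragraph, where you concede that the key redrawing step ``appears to require genuinely new ideas'' and that this ``is presumably why the author poses this as Conjecture~\ref{intro:conj}.'' In other words, your proposal is not a proof but a plausibility sketch together with an honest diagnosis of where it breaks down, and on that level it is accurate.

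For completeness: the paper's only remark on how one might attack the conjecture is different from yours. The author suggests refining the \emph{algebraic} proof of Theorem~\ref{intro:surfacesSSS} due to Schaefer, Sedgwick, and \v{S}tefankovi\v{c}, which rephrases the problem in terms of word equations over trace monoids with involutions; this is precisely because the circular-inversion technique underlying Theorem~\ref{Intro:SS} does not transfer to surfaces. Your approach is instead topological---cut along a fundamental system of curves, redraw in the resulting disk, and glue back---and you correctly identify the obstruction: the planar redrawing moves are ambient homeomorphisms that do not respect the boundary identifications, so the glued-back curves need not represent $G$. Neither route is currently known to succeed, which is exactly the state of affairs the paper records.
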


If \cref{intro:conj} holds, it would extend \cref{intro:surfacesSSS} and generalise \cref{main:localised} to string graphs on surfaces. It would also imply that every string graph with maximum degree at most $\Delta$ in a surface with Euler genus $g$ is a $(g, h(g, \Delta))$-string graph for some function $h$. Combined with \cref{gdStringGPST}, this would provide an alternative proof of \cref{thm:intromain}. A potential way to settle \cref{intro:conj} is to refine the above-mentioned algebraic proof of \cref{intro:surfacesSSS} given by \citet{SSS-JCSS03}. However, this appears challenging, and it seems that new ideas are needed. In contrast, the techniques used in our proof of \cref{thm:intromain} are combinatorial, do not require the power of \cref{intro:surfacesSSS}, and allow us to prove \cref{outerstringplane,outerstringsurfaces}.

The paper is organised as follows. \cref{preliminaries} contains necessary definitions and a proof of a basic lemma of \citet*{KGK86}, which we use to prove our main results. \cref{section:colouredplanarisations} defines the coloured planarisation of a string representation and analyses its properties. \cref{section:proofs} proves our main results, \cref{thm:intromain,outerstringplane,outerstringsurfaces}. \cref{Section:localrealisations} proves \cref{main:localised}. In \cref{Section:intersectiongraphs}, we conjecture that the row treewidth of every string graph $G$ in a fixed surface is upper-bounded by a polynomial function of the maximum degree of $G$. As evidence for this conjecture, we show that it holds for intersection graphs of convex sets in the plane, which constitute an important subclass of string graphs. \cref{Section:examples} discusses examples that show that our product structure theorems cease to be true if the ‘bounded maximum degree’ assumption is relaxed to ‘bounded degeneracy'. \cref{section:conclusion} concludes the paper with open problems about string graphs with bounded degeneracy.

\section{Preliminaries} \label{preliminaries}

We consider finite undirected graphs without loops and parallel edges. For a vertex $v$ of a graph $G$, we denote its degree in $G$ by \defn{$d_{G}(v)$} and its neighbourhood by~\defn{$N_{G}(v)$}. For any undefined graph-theoretic terminology, see \citep{Diestel5}.

A \defn{drawing} of a graph $G$ is an embedding of $G$ in a surface, where vertices are associated with distinct points and each edge $vw$ of $G$ is associated with a non-self-intersecting curve between $v$ and $w$, such that:

\begin{itemize}

\item no edge passes through any vertex different from its endpoints,

\item  each pair of edges cross at a finite number of points,

\item  no three edges internally cross at a common point.

\end{itemize}

For a drawing $D$ of a graph $G$, a \defn{crossing} (or \defn{crossing point}) of distinct edges $e$ and $f$ of $G$ is an internal common point of the curves associated with $e$ and $f$ in $D$.

The \defn{Euler genus} of a surface obtained from a sphere by adding $h$ handles and $c$ cross-caps is~$2h+c$. The \defn{Euler genus} of a graph $G$ is the minimum Euler genus of a surface in which $G$ embeds without crossings.

A \defn{walk} in a graph $G$ is a sequence
$(v_{1}, v_{2},\dots, v_{s})$ of vertices in $G$ such that $v_{i}v_{i + 1} \in E(G)$ for each $i \in \{1,\dots, s - 1\}$. A \defn{path} in a graph $G$ is a walk $(v_{1}, v_{2},\dots, v_{s})$ in $G$ such that $v_{i} \neq v_{j}$ for all distinct $i, j \in \{1,\dots,s\}$. Let $W = (v_{1}, v_{2}, \dots, v_{s})$ be a walk in a graph $G$. We say that $v_{1}$ and $v_{s}$ are the \defn{endpoints} of $W$. For any $i \in \{1,\dots,i - 1\}$, $v_{i}$ and $v_{i + 1}$ are \defn{consecutive} vertices in $W$.

For a graph $G$, a \defn{tree decomposition} is a pair $(T, B)$ such that:

\begin{itemize}
    \item $T$ is a tree and $B : V(T) \rightarrow 2^{V(G)}$ is a function,
    \item for every edge $vw \in E(G)$, there exists a node $t \in V(T)$ with $v, w \in B(t)$, and
    \item for every vertex $v \in V(G)$, the subgraph of $T$ induced by $\{t \in V(T) : v \in B(t)\}$ is a non-empty (connected) subtree of $T$.
\end{itemize}

The sets $B(t)$ where $t \in V(T)$ are called \defn{bags} of $(T, B)$. The \defn{width} of a tree decomposition $(T, B)$ is $\max\{|B(t)| : t \in V(T)\} - 1$. The \defn{treewidth} of $G$, denoted \defn{$\tw(G)$}, is the minimum width of a tree decomposition of $G$.

Let $G$ and $H$ be graphs. $G$ is a \defn{minor} of $H$ if a graph isomorphic to $G$ can be obtained from $H$
by vertex deletion, edge deletion, and edge contraction. A \defn{model} of $G$ in $H$ is a function $\mu : V(G) \rightarrow 2^{V(H)}$ such that:
\begin{itemize}
    \item for each $v \in V(G)$, $\mu(v)$ is non-empty and the subgraph of $H$ induced by $\mu(v)$ is connected;

    \item $\mu(v) \cap \mu(w) = \emptyset$ for all distinct $v, w \in V(G)$; and

    \item for every edge $vw \in E(G)$, $ab \in E(H)$ for some $a \in \mu(v)$ and $b \in \mu(w)$.

\end{itemize}

The sets $\mu(v)$ are called \defn{branch sets} of $\mu$. It is folklore that $G$ is a minor of $H$ if and only if there exists a model of $G$ in $H$. It is well-known that if $G$ is a minor of $H$ then $\tw(G) \leqslant \tw(H)$ (see \citep{Bodlaender98} for an implicit proof).

A \defn{partition} of a graph $G$ is a set $\mathcal{P}$ of non-empty sets (called \defn{parts}) of vertices in $G$ such that each vertex of $G$ is in exactly one element of $\mathcal{P}$. A \defn{layering} is a partition $(V_{0}, V_{1},\dots, V_{s})$ of $G$ such that for every edge $vw \in E(G)$, if $v \in V_{i}$ and $w \in V_{j}$, then $|i - j| \leqslant 1$. Each set $V_{i}$ is called a \defn{layer}. The \defn{layered width} of a tree decomposition $(T, B)$ of $G$ is the minimum integer $\ell$ such that, for some layering $(V_{0}, V_{1},\dots, V_{s})$ of $G$, each bag $B(t)$ contains at most $\ell$ vertices in each layer $V_{i}$. The \defn{layered treewidth} of $G$, denoted \defn{$\ltw(G)$}, is the minimum layered width of a tree decomposition of $G$.

The \defn{strong product} of graphs $G_{1}$ and $G_{2}$ is the graph $G_{1} \boxtimes G_{2}$ with vertex set $V(G_{1} \boxtimes G_{2}) := \{(a, v) : a \in V(G_{1}), v \in V(G_{2})\}$, where distinct vertices $(a, v)$ and $(b, w)$ are adjacent if: $ab \in E(G_{1})$ and $v = w$; or $a = b$ and $vw \in E(G_{2})$; or $ab \in E(G_{1})$ and $vw \in E(G_{2})$. If $X \subseteq V(G_{1} \boxtimes G_{2})$, then the \defn{projection} of $X$ into $G_{1}$ is the set of vertices $a \in V(G_{1})$ such that $(a, v) \in X$ for some $v \in V(G_{2})$.

We include the proof of the result of \citet*{KGK86}, which states that every string graph in the plane has a string representation with a finite number of crossing points. In fact, the proof works for string graphs on surfaces.

\begin{lem} \label{lem:finitesurfaces} Every string graph $G$ in a compact surface $\Sigma$ has a string representation in $\Sigma$ with a finite number of crossing points.
\end{lem}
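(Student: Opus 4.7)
\textit{Proof plan.} I would prove this by pruning the given representation and then replacing each curve with a generic piecewise-linear (PL) approximation that stays in a small tubular neighbourhood of the original and realises exactly the prescribed witness crossings for each edge.

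Start from any string representation $\mathcal{C} = \{c_v : v \in V(G)\}$ of $G$ in $\Sigma$ and, for each edge $uv \in E(G)$, pick a witness point $p_{uv} \in c_u \cap c_v$. Let $P := \{p_{uv} : uv \in E(G)\}$; this set is finite, and by the no-three-curves-cross condition, each $p_{uv}$ lies on exactly the two curves $c_u$ and $c_v$. Equip $\Sigma$ with a metric (say, from a piecewise-flat triangulation, which exists since $\Sigma$ is a compact surface). Choose $\varepsilon > 0$ small enough that:
\begin{enumerate}[(i)]
\item for every non-edge $uv \notin E(G)$, the $\varepsilon$-neighbourhoods of $c_u$ and $c_v$ are disjoint;
\item for every $p_{u'v'} \in P$ and every $w \in V(G) \setminus \{u', v'\}$, the ball $B(p_{u'v'}, \varepsilon)$ is disjoint from $c_w$;
\item the $\varepsilon$-balls around distinct points of $P$ are pairwise disjoint.
\end{enumerate}
Only finitely many pairs of disjoint compact sets are involved, so such an $\varepsilon$ exists.

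Refine a triangulation $\mathcal{T}$ of $\Sigma$ so that every triangle has diameter less than $\varepsilon$ and every point of $P$ is a vertex of $\mathcal{T}$. For each $v \in V(G)$, approximate $c_v$ by a simple PL curve $c'_v$ in general position that stays within the $\varepsilon$-neighbourhood of $c_v$ and visits each $p_{uv}$, $u \in N_G(v)$, in the same order as along $c_v$; this is standard PL approximation on a triangulated surface. For each edge $uv \in E(G)$, both $c'_u$ and $c'_v$ pass through $p_{uv}$, so they cross. For each non-edge $uv \notin E(G)$, conditions (i)--(iii) force $c'_u \cap c'_v = \emptyset$. Because each $c'_v$ is a finite union of straight segments chosen generically, any two new curves meet in only finitely many points, and a final generic perturbation removes any accidental triple crossings, producing a valid string representation of $G$ in $\Sigma$ with finitely many crossings.

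The main obstacle is balancing the PL approximation: the new curves must be close enough to the originals to preserve all non-adjacencies (guaranteed by condition (i)), yet flexible enough to pass through every witness point while remaining simple and pairwise transversal. Choosing the mesh of $\mathcal{T}$ finer than $\varepsilon$ and applying a generic perturbation resolves both requirements simultaneously; everything else is an exercise in compactness and finite combinatorics.
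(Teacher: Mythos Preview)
Your proof is correct and takes a genuinely different route from the paper's.

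The paper argues purely topologically: for each point $p$ in the union $\Gamma$ of the curves, it picks an open neighbourhood meeting only the curves through $p$; compactness of $\Gamma$ gives a finite subcover $\mathcal{O}'$, and then each curve is redrawn inside its finitely many sets of $\mathcal{O}'$ so that any two new curves meet finitely often in each set. Your argument is metric and constructive: you fix witness points, use compactness only to find a uniform $\varepsilon$ separating all non-adjacent pairs, and then replace each curve by a PL approximation in its $\varepsilon$-tube that is forced through the witnesses. Both proofs rest on the same compactness principle (disjoint compact sets on a compact surface are at positive distance / admit separating neighbourhoods), but they diverge in how the new curves are produced. The paper's redrawing step is brief to the point of being a black box (``every curve $\gamma_i$ can be replaced\ldots''), whereas your PL-approximation step is explicit and makes clear why the new representation has finitely many crossings (finitely many segments, generic position). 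Conversely, the paper's open-cover argument avoids any appeal to triangulations, metrics, or PL machinery, so it is lighter on prerequisites. Your conditions (ii) and (iii) are not strictly needed for the logical core of your argument---condition (i) already forces non-adjacent PL curves apart, and the witness points guarantee adjacency---but they do no harm and make the local picture near each $p_{uv}$ cleaner.
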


\begin{proof} Let $\mathcal{C} := \{\gamma_{1}, \dots, \gamma_{n}\}$ be a string representation of $G$ in $\Sigma$, and let $\Gamma := \gamma_{1} \cup \dots \cup \gamma_{n}$. Note that the curves $\gamma_{1}, \dots, \gamma_{n}$ are compact and $\Gamma$ is compact. For a point $p$ of $\Gamma$, let $\mathcal{N}_{p}$ be the collection of all open neighbourhoods $O_{p}$ of $p$ such that $O_{p}$ intersects only those curves of $\Gamma$ that contain $p$. If a compact set intersects all open neighbourhoods of a point $p$, it must contain $p$. Therefore, $\mathcal{N}_{p} \neq \emptyset$ for every point $p$ of $\Gamma$ because the curves of $\mathcal{C}$ are compact.

Let $\mathcal{O} := \bigcup_{p \in \Gamma}\mathcal{N}_{p}$. Observe that $\mathcal{O}$ is an open cover of $\Gamma$. Since $\Gamma$ is compact, there exists a finite cover $\mathcal{O}' \subseteq \mathcal{O}$ of $\Gamma$. Hence for each $i \in \{1, \dots, n\}$, the curve $\gamma_{i}$ is contained in a finite collection $\mathcal{O}'_{i} \subseteq \mathcal{O}'$ of open sets such that each open set of $\mathcal{O}'_{i}$ does not intersect any curve $\gamma_{j}$ that does not cross $\gamma_{i}$. Therefore every curve $\gamma_{i}$ can be replaced with a curve $\alpha_{i}$ drawn in $\mathcal{O}'_{i}$ such that: (i) $\alpha_{i}$ crosses $\alpha_{j}$ if and only if $\gamma_{i}$ crosses $\gamma_{j}$, and (ii) every two curves of $\{\alpha_{1}, \dots, \alpha_{n}\}$ have a finite number of crossing points in every open set of $\mathcal{O}'$. Note that (i) is possible because for every two crossing curves $\gamma_{i}$ and $\gamma_{j}$ there exists a set $O \in \mathcal{O}'$ that contains a common crossing point of $\gamma_{i}$ and $\gamma_{j}$. The condition (ii) guarantees the number of crossing points of the curves $\alpha_{1}, \dots, \alpha_{n}$ is finite because $\mathcal{O}'$ is finite.
\end{proof}

\section{Coloured Planarisations} \label{section:colouredplanarisations}

This section defines an auxiliary graph that is a useful tool in the proofs of our main results, \cref{thm:intromain,outerstringplane,outerstringsurfaces}. Specifically, we define the coloured planarisation of a string representation and analyse its
properties. The concept of coloured planarisations was recently introduced by \citet{HKW} in the analysis of $k$-matching-planar drawings.

In what follows, $\mathcal{C}$ is a string representation of a string graph $G$ without isolated vertices in a compact surface $\Sigma$. So every curve in $\mathcal{C}$ is involved in crossing points. By \cref{lem:finitesurfaces}, we can assume that $\mathcal{C}$ has a finite number of crossing points. By slightly perturbing the curves of $\mathcal{C}$, we can also ensure that: (i) every curve $\gamma$ in $\mathcal{C}$ has distinct endpoints (so $\gamma$ is not closed) and (ii) every endpoint of every curve $\gamma$ in $\mathcal{C}$ does not belong to any other curve in $\mathcal{C} \setminus \{\gamma\}$. Let \defn{$\mathcal{E}_{\mathcal{C}}$} be the set of endpoints of curves in $\mathcal{C}$. 

We define ordered colourings of $\mathcal{C}$ analogously to the definition of ordered colourings of graphs given in \cref{section:intro}. That is, an \defn{ordered $t$-colouring} of $\mathcal{C}$ is a function $\phi : \mathcal{C} \rightarrow \{1, \dots, t\}$ (where the integers $1, \dots, t$ are called \defn{colours}) such that no two curves in $\mathcal{C}$ of the same colour cross. We fix an ordered $t$-colouring $\phi$ of $\mathcal{C}$. Since $G$ has no isolated vertices, $t \geqslant 2$.

The \defn{planarisation} of $\mathcal{C}$, denoted \defn{$\mathcal{C}'$}, is the graph equipped with the drawing in $\Sigma$ obtained from $\mathcal{C}$ by replacing each crossing with a `dummy' vertex of degree $4$ and adding points of $\mathcal{E}_{\mathcal{C}}$ as vertices. Since every curve in $\mathcal{C}$ is involved in crossing points, $\mathcal{E}_{\mathcal{C}} \subsetneq V(\mathcal{C}')$. Since $\mathcal{C}$ has a finite number of crossing points (by \cref{lem:finitesurfaces}), $\mathcal{C}'$ is a finite graph. 
For each curve $\gamma \in \mathcal{C}$, let \defn{$L_{\gamma}$} be the path in $\mathcal{C}'$ determined by $\gamma$. Define the \defn{level} of a dummy vertex $d \in \gamma_{1} \cap \gamma_{2} \setminus \mathcal{E}_{\mathcal{C}}$ to be $\level(d) := \min(\phi(\gamma_{1}), \phi(\gamma_{2}))$. For each $v \in \mathcal{E}_{\mathcal{C}}$, let $\level(v) := 0$. Let \defn{$\mathcal{C}^{\phi}$} be the graph equipped with the drawing in $\Sigma$ obtained from $\mathcal{C}'$ as follows: for each curve $\gamma \in \mathcal{C}$ and for any two consecutive (along $\gamma$) dummy vertices $d_{1}, d_{2} \in L_{\gamma} \setminus \mathcal{E}_{\mathcal{C}}$ such that $\level(d_{1}) = \level(d_{2}) = \phi(\gamma)$, contract the edge $d_{1}d_{2}$ in $\mathcal{C}'$. We say that $\mathcal{C}^{\phi}$ is the \defn{coloured planarisation} of $\mathcal{C}$. Note that $\mathcal{C}'$ and $\mathcal{C}^{\phi}$ do not have crossings in $\Sigma$. So the Euler genus of $\mathcal{C}'$ and $\mathcal{C}^{\phi}$ is at most the Euler genus of $\Sigma$. In particular, if $\Sigma$ is the plane, then $\mathcal{C}'$ and $\mathcal{C}^{\phi}$ are planar graphs.

See \cref{fragmentssections,colouredplanarisation} for an example of a coloured planarisation. In these figures, the colours of the edges of $\mathcal{C}'$ and $\mathcal{C}^{\phi}$ are kept for better visual understanding, but formally speaking we do not define edge-colourings of $\mathcal{C}'$ or~$\mathcal{C}^{\phi}$. The endpoints $\mathcal{E}_{\mathcal{C}}$ are grey, and the vertices of $V(\mathcal{C}') \setminus \mathcal{E}_{\mathcal{C}}$ and $V(\mathcal{C}^{\phi}) \setminus \mathcal{E}_{\mathcal{C}}$ are black.

Let \defn{$\psi$} $: V(\mathcal{C}') \rightarrow V(\mathcal{C}^{\phi})$ be the surjective function determined by the contraction operation in the construction of $\mathcal{C}^\phi$. We emphasise that $\mathcal{C}^{\phi}$ depends upon the ordering of the colours in the ordered $t$-colouring $\phi$. Since every curve in $\mathcal{C}$ is involved in crossing points, $L_{\gamma} \setminus \mathcal{E}_{\mathcal{C}} \neq \emptyset$ and $W_{\gamma} \setminus \mathcal{E}_{\mathcal{C}} \neq \emptyset$ for each $\gamma \in \mathcal{C}$. Note that no edge of $\mathcal{C}'$ incident to $\mathcal{E}_{\mathcal{C}}$ is contracted in the construction of $\mathcal{C}^{\phi}$. So $\mathcal{E}_{\mathcal{C}} \subsetneq V(\mathcal{C}^{\phi})$ and $\psi(v) = v$ for each $v \in \mathcal{E}_{\mathcal{C}}$.

Let $\gamma \in \mathcal{C}$ be an arbitrary curve. The crossing points of $\gamma$ and the curves of colour less than $\phi(\gamma)$ split $\gamma$ into subcurves, called the \defn{fragments} of $\gamma$ (see \cref{fragmentssectionsa}). For each $\gamma \in \mathcal{C}$, every fragment of $\gamma$ naturally induces a subpath of $L_{\gamma}$. Let $M$ be such a subpath. If $M$ consists of at least three vertices, then the subpath of $M$ obtained by deleting the endpoints of $M$ is called a \defn{section} of $L_{\gamma}$ (see \cref{fragmentssectionsb}). By definition, every section $S$ of $L_{\gamma}$ is non-empty and the level of every vertex of $S$ is $\phi(\gamma)$.

\begin{figure}[h]
    \centering
    \begin{subfigure}[t]{0.44\textwidth}
    \centering
        \scalebox{1.13}{\includegraphics{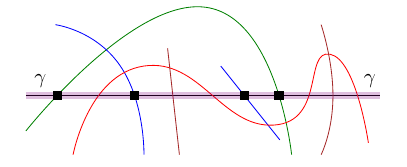}}
            \subcaption{$\mathcal{C}$}
            \label{fragmentssectionsa}
    \end{subfigure} \quad \quad  
    \begin{subfigure}[t]{0.5\textwidth}
    \centering
        \scalebox{1.13}{\includegraphics{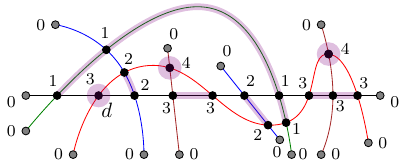}}
        \subcaption{$\mathcal{C}'$}
        \label{fragmentssectionsb}
    \end{subfigure}
    \caption{An example of fragments and sections. (a) A string representation $\mathcal{C}$ in the plane with an ordered $5$-colouring $\phi$, where colours are: green $= 1$, blue $= 2$, black $= 3$ (only the horizontal curve $\gamma$ is black), red $= 4$, and brown $= 5$. The  curve $\gamma$ is split by the crossing points (marked as squares) of $\gamma$ and the curves of smaller colours into five fragments (highlighted in purple). (b) The planarisation $\mathcal{C}'$ of $\mathcal{C}$. Each vertex is labelled by its level. The edges of sections and $1$-vertex sections of $\mathcal{C}'$ are highlighted in purple. There are three sections of $L_{\gamma}$, one of which consists of a single dummy vertex labelled $d$.}
    \label{fragmentssections}
\end{figure}

Let $S_{1}$ be a section of $L_{\gamma_{1}}$ and $S_{2}$ be a section of $L_{\gamma_{2}}$ such that $S_{1} \neq S_{2}$, where $\gamma_{1}, \gamma_{2} \in \mathcal{C}$. If $\gamma_{1} = \gamma_{2}$ then $S_{1}$ and $S_{2}$ are disjoint. Otherwise, $\gamma_{1} \neq \gamma_{2}$. If $S_{1} \cap S_{2} \neq \emptyset$ then there exists a dummy vertex $d \in S_{1} \cap S_{2} \setminus \mathcal{E}_{\mathcal{C}}$, and hence $d \in \gamma_{1} \cap \gamma_{2}$. By definition of $\phi$, we have $\phi(\gamma_{1}) \neq \phi(\gamma_{2})$. Without loss of generality, $\phi(\gamma_{1}) < \phi(\gamma_{2})$. By definition, $d$ is not a vertex of a section of $L_{\gamma_{2}}$, a contradiction. Thus, sections of $\mathcal{C}'$ are pairwise disjoint.

For each section $S$ of $\mathcal{C}'$, there exists exactly one curve $\gamma \in \mathcal{C}$ such that $L_{\gamma}$ contains $S$ and $\phi(\gamma)$ is equal to the common level of the vertices of $S$. The coloured planarisation $\mathcal{C}^{\phi}$ is obtained from the planarisation $\mathcal{C}'$ by contracting every edge in every section of $\mathcal{C}'$ (see \cref{colouredplanarisation}). For $v \in \mathcal{E}_{\mathcal{C}}$, we have $\psi^{-1}(v) = \{v\}$. Since sections are pairwise disjoint, for $x \in V(\mathcal{C}^{\phi}) \setminus \mathcal{E}_{\mathcal{C}}$, the set $\psi^{-1}(x)$ is the vertex set of a section of $\mathcal{C}'$. Note that $(\psi^{-1}(x) : x \in V(\mathcal{C}^{\phi}))$ is a partition of $\mathcal{C}'$.

For each vertex $x \in V(\mathcal{C}^{\phi})$, define the \defn{level} of $x$, denoted $\level(x)$, to be the common level of the vertices in $\psi^{-1}(x)$. Observe that the vertices of level $0$ are exactly the elements of $\mathcal{E}_{\mathcal{C}}$.

Consider any curve $\gamma \in \mathcal{C}$. Let $w_{0},\dots, w_{r}$ be the path $L_{\gamma}$ in $\mathcal{C}'$ (so $w_{0}$ and $w_{r}$ are the endpoints of $\gamma$). Let \defn{$W_{\gamma}$} be the walk in $\mathcal{C}^{\phi}$ obtained from $(\psi(w_{0}), \psi(w_{1}),\dots, \psi(w_{r}))$ by identifying consecutive identical vertices. Note that the level of each vertex in $W_{\gamma}$ is at most $\phi(\gamma)$. Observe that for distinct curves $\gamma_{1}, \gamma_{2} \in \mathcal{C}$, $W_{\gamma_{1}} \cap W_{\gamma_{2}} = \emptyset$ if and only if $\gamma_{1}$ and $\gamma_{2}$ cross.

\begin{figure}[h]
    \centering
        \scalebox{1.13}{\includegraphics{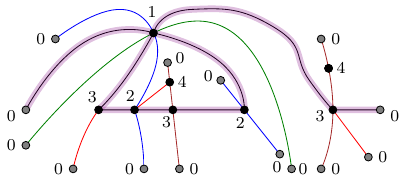}}
    \caption{The coloured planarisation $\mathcal{C}^{\phi}$ of the string representation $\mathcal{C}$ with the ordered $5$-colouring $\phi$ from \cref{fragmentssectionsa}. Each vertex is labelled by its level.
    The edges between consecutive vertices of the walk $W_{\gamma}$ in $\mathcal{C}^{\phi}$ are highlighted in purple.  
    }
    \label{colouredplanarisation}
\end{figure}

We now establish several properties of coloured planarisations.

\begin{lem} \label{lem:CPLuniquecurve}
    Let $x \in V(\mathcal{C}^{\phi}) \setminus \mathcal{E}_{\mathcal{C}}$. Then there exists exactly one curve $\gamma \in \mathcal{C}$ such that $\phi(\gamma) = \level(x)$ and $x \in W_{\gamma}$. Moreover, $L_{\gamma}$ contains $\psi^{-1}(x)$.
\end{lem}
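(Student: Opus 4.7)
The plan is to read off the result almost entirely from facts that have already been established in the two paragraphs preceding the lemma. By construction, $\psi$ is the identity on $\mathcal{E}_{\mathcal{C}}$, and for $x \in V(\mathcal{C}^{\phi}) \setminus \mathcal{E}_{\mathcal{C}}$ the set $\psi^{-1}(x)$ is precisely the vertex set of some section $S$ of $\mathcal{C}'$. It has also been noted that for every section $S$ of $\mathcal{C}'$ there is exactly one curve $\gamma \in \mathcal{C}$ with $L_{\gamma} \supseteq V(S)$ whose colour $\phi(\gamma)$ equals the common level of the vertices of $S$. Taking this $\gamma$ immediately yields $L_{\gamma} \supseteq \psi^{-1}(x)$ and $\phi(\gamma) = \level(x)$.

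The first step of the proof is to verify $x \in W_{\gamma}$ for this chosen $\gamma$. This is essentially unpacking the definition: pick any $w \in \psi^{-1}(x) \subseteq V(L_{\gamma})$; writing $L_{\gamma} = (w_0,\dots,w_r)$, the vertex $w$ occurs as some $w_i$, so $x = \psi(w_i)$ occurs in the sequence $(\psi(w_0),\dots,\psi(w_r))$ and therefore in the walk $W_{\gamma}$ obtained from it by identifying consecutive repetitions.

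The second, and only slightly more delicate, step is uniqueness. Suppose some $\gamma' \in \mathcal{C}$ with $\gamma' \neq \gamma$ also satisfies $\phi(\gamma') = \level(x)$ and $x \in W_{\gamma'}$. Then $x$ appears as $\psi(w')$ for some $w' \in V(L_{\gamma'})$, so $w' \in \psi^{-1}(x) = V(S)$. Since $S$ is a section of $L_{\gamma}$, every vertex of $S$ is obtained from the fragment path $M$ by removing its two endpoints; hence $w'$ is an \emph{interior} dummy vertex of a fragment of $\gamma$, meaning that $w' \in \mathcal{E}_{\mathcal{C}}^{c}$ is a crossing point of $\gamma$ with some curve $\gamma''$ of colour strictly greater than $\phi(\gamma)$ (otherwise $w'$ would already have been peeled off as an endpoint of $M$). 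But a dummy vertex lies on exactly the two curves that meet at it, so $w' \in \gamma'$ forces $\gamma' \in \{\gamma, \gamma''\}$. The first option is excluded by assumption and the second by $\phi(\gamma') = \phi(\gamma) < \phi(\gamma'')$, a contradiction.

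The only place where one must be slightly careful is the uniqueness step: it is easy to overlook that the section $S$ is obtained from $M$ by deleting endpoints, which is exactly what forces every vertex of $S$ to be a crossing with a \emph{strictly larger}-coloured curve. Once this is in hand, the contradiction with $\phi$ being an ordered colouring of $\mathcal{C}$ closes the argument, and no further machinery is needed.
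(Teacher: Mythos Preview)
Your proof is correct and follows essentially the same approach as the paper. The only difference is in the uniqueness step: the paper argues more directly that any $w'\in\psi^{-1}(x)\subseteq L_\gamma$ with $w'\in L_{\gamma'}$ forces $\gamma$ and $\gamma'$ to cross, which already contradicts $\phi(\gamma')=\phi(\gamma)$; your detour through the second curve $\gamma''$ at the crossing point is unnecessary but harmless.
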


\begin{proof}
    By assumption, $\psi^{-1}(x)$ is the vertex set of a section of $\mathcal{C}'$. Hence, there exists exactly one curve $\gamma \in \mathcal{C}$ such that $\phi(\gamma) = \level(x)$ and $L_{\gamma}$ contains $\psi^{-1}(x)$. Then $x \in W_{\gamma}$. Since no two curves in $\mathcal{C}$ of the same colour cross, no curve in $\mathcal{C}$ of colour $\phi(\gamma)$ crosses $\gamma$. Hence, $\gamma$ is the only curve in $\mathcal{C}$ of colour $\level(x)$ that contains a dummy vertex of $\psi^{-1}(x)$. Thus there is no curve $\gamma_{1} \in \mathcal{C} \setminus \{\gamma\}$ such that $\phi(\gamma_{1}) = \level(x)$ and $x \in W_{\gamma_{1}}$.
\end{proof}

\begin{lem} \label{lem:CPLlocaldistance} Let $Y \subseteq \mathcal{E}_{\mathcal{C}}$ be a set such that each curve in $\mathcal{C}$ has at least one of its endpoints in $Y$. Then for each $x \in V(\mathcal{C}^{\phi}) \setminus \mathcal{E}_{\mathcal{C}}$, there exists $v \in Y$ such that $\dist_{\mathcal{C}^{\phi}}(x, v) \leqslant t - 1$.
\end{lem}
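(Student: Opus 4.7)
The plan is to prove by strong induction on $\level(x)$ that $\dist_{\mathcal{C}^\phi}(x, Y) \le \level(x)$ for every $x \in V(\mathcal{C}^\phi) \setminus \mathcal{E}_\mathcal{C}$. Combined with the fact that every such $x$ satisfies $\level(x) \le t-1$, this yields the claimed distance bound. To see the latter, apply \cref{lem:CPLuniquecurve} to obtain the unique $\gamma \in \mathcal{C}$ with $\phi(\gamma) = \level(x)$ such that $L_\gamma$ contains $\psi^{-1}(x)$; the section $\psi^{-1}(x)$ is non-empty and each of its vertices is a dummy vertex from a crossing of $\gamma$ with some curve $\gamma''$ of strictly greater colour, so $\phi(\gamma) \le t-1$.

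For the base case $\level(x) = 1$, the associated curve $\gamma$ has colour $1$, so it has no crossings with smaller-colour curves and therefore exactly one fragment, namely $\gamma$ itself. The corresponding section is $L_\gamma$ with its two endpoints $u, v$ removed, and collapses to the single vertex $x$ under $\psi$. Thus $x$ is adjacent in $\mathcal{C}^\phi$ to both $u$ and $v$, and since at least one of $u, v$ lies in $Y$ by hypothesis, $\dist_{\mathcal{C}^\phi}(x, Y) \le 1$.

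For the inductive step, fix $k \ge 2$, assume the result for all smaller levels, and let $\level(x) = k$ with associated curve $\gamma$. The section $\psi^{-1}(x)$ is the interior of the subpath of $L_\gamma$ induced by some fragment $M$ of $\gamma$. The two endpoints of $M$ (viewed as vertices of $L_\gamma$) are each either an endpoint of $\gamma$ or a dummy vertex from a crossing of $\gamma$ with a curve of smaller colour. If both are endpoints of $\gamma$, then $x$ is adjacent in $\mathcal{C}^\phi$ to an endpoint of $\gamma$ that lies in $Y$, giving $\dist_{\mathcal{C}^\phi}(x, Y) \le 1 \le k$. Otherwise some endpoint $b$ of $M$ is a dummy vertex on $L_\gamma$ with $\level(b) = \phi(\gamma'') \in \{1, \dots, k-1\}$, where $\gamma''$ is the smaller-colour curve crossing $\gamma$ at $b$. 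The edge from $b$ to its neighbour in $\psi^{-1}(x)$ is not contracted in the construction of $\mathcal{C}^\phi$ (because $\level(b) < \phi(\gamma)$ violates the contraction condition), so $\psi(b) \in V(\mathcal{C}^\phi) \setminus \mathcal{E}_\mathcal{C}$ is adjacent to $x$ in $\mathcal{C}^\phi$ and has level $\level(b) \le k-1$. Applying the inductive hypothesis to $\psi(b)$ yields $\dist_{\mathcal{C}^\phi}(\psi(b), Y) \le k-1$, and hence $\dist_{\mathcal{C}^\phi}(x, Y) \le k$.

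The main obstacle is purely one of bookkeeping. One must verify from the construction that (i) the contraction inside a section leaves the section-boundary edges to smaller-level neighbours intact, so that $x$ truly has $\psi(b)$ as a neighbour in $\mathcal{C}^\phi$, and (ii) $\psi(b)$ lies in $V(\mathcal{C}^\phi) \setminus \mathcal{E}_\mathcal{C}$, with the correct level, so that the inductive hypothesis applies. Both follow because only edges between pairs of dummy vertices of level equal to the curve's colour are contracted, and no endpoint of a curve is ever identified with a dummy vertex in $\mathcal{C}^\phi$.
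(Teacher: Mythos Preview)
Your proof is correct and takes essentially the same approach as the paper: both arguments exploit that every vertex $x \in V(\mathcal{C}^\phi)\setminus\mathcal{E}_\mathcal{C}$ has a neighbour of strictly smaller level (or, failing that, is adjacent to an endpoint of its curve in $Y$), and then descend through levels. The paper phrases this as explicitly building a level-decreasing path terminating at a vertex adjacent to $Y$, while you package the same descent as strong induction on $\level(x)$; the underlying mechanism and the resulting bound $\dist_{\mathcal{C}^\phi}(x,Y)\le\level(x)\le t-1$ are identical.
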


\begin{proof}

Let $y \in V(\mathcal{C}^{\phi}) \setminus \mathcal{E}_{\mathcal{C}}$ be an arbitrary vertex. Recall that $\psi^{-1}(y)$ is the vertex set of a section of $\mathcal{C}'$. By \cref{lem:CPLuniquecurve}, there exists exactly one curve $\gamma \in \mathcal{C}$ such that $\phi(\gamma) = \level(y)$ and $L_{\gamma}$ contains $\psi^{-1}(y)$. Recall that the level of each vertex of $\psi^{-1}(y)$ is $\phi(\gamma)$. Since $L_{\gamma}$ is a path, there are two vertices $a, b$ in $L_{\gamma}$ adjacent in the graph $\mathcal{C}'$ to the endpoints of $\psi^{-1}(y)$. Observe that $\level(a) < \phi(\gamma)$ and $\level(b) < \phi(\gamma)$. Hence $y$ is adjacent to $\psi(a)$ and $\psi(b)$ in the graph $\mathcal{C}^{\phi}$, where $\level(\psi(a)) < \level(y)$ and $\level(\psi(b)) < \level(y)$. If $a, b \in \mathcal{E}_{\mathcal{C}}$ then $\psi(a) = a$ and $\psi(b) = b$, $a$ and $b$ are distinct, and at least one of $a$ or $b$ belongs to $Y$ because $a, b \in L_{\gamma}$. So if $y$ does not have neighbours of levels $1, 2, \dots, \level(y) - 1$, then $y$ is adjacent to a vertex of $Y$.

By definition, the level of each vertex in $\mathcal{C}'$ is at most $t - 1$. Therefore, the level of each vertex in $\mathcal{C}^{\phi}$ is at most $t - 1$. Hence $1 \leqslant \level(x) \leqslant t - 1$. By an observation in the previous paragraph, there exists a path $x = x_{0}, x_{1}, \dots, x_{r} = w$ in $G^{\phi}$ such that: (i) $\level(w) > 0$, (ii) $w$ does not have neighbours of levels $1, 2, \dots, \level(w) - 1$, and (iii) $\level(x_{i + 1}) < \level(x_{i})$ for each $i \in \{0,\dots,r - 1\}$. By an observation in the previous paragraph, there exists $v \in Y$ such that $vw \in E(\mathcal{C}^{\phi})$. It follows from (iii) that $\dist_{\mathcal{C}^{\phi}}(x, v) \leqslant t - 1$, as desired.
\end{proof}

\begin{lem} \label{lem:CPLconsecutive} For any $\gamma \in \mathcal{C}$, no two vertices with level $\phi(\gamma)$ are consecutive in $W_{\gamma}$.
\end{lem}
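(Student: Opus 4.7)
The plan is to argue by contradiction: I will assume that two consecutive vertices $x_1, x_2$ in $W_\gamma$ both have level $\phi(\gamma)$ and derive that they must already have been identified by the contraction rule defining $\mathcal{C}^\phi$, making them equal in $\mathcal{C}^\phi$, a contradiction.

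First, I will unpack what it means for $x_1, x_2$ to be consecutive in $W_\gamma$. Write $L_\gamma = w_0, w_1, \dots, w_r$. By the construction of $W_\gamma$ via identifying consecutive identical entries of $(\psi(w_0), \dots, \psi(w_r))$, the consecutivity of $x_1$ and $x_2$ in $W_\gamma$ means there is an index $j$ with $\psi(w_{j-1}) = x_1$ and $\psi(w_j) = x_2$. In particular, $w_{j-1}$ and $w_j$ are adjacent in $L_\gamma \subseteq \mathcal{C}'$.

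Next, I will identify $\psi^{-1}(x_1)$ and $\psi^{-1}(x_2)$. Since $\level(x_1) = \level(x_2) = \phi(\gamma) \geqslant 1$, both vertices lie in $V(\mathcal{C}^\phi) \setminus \mathcal{E}_{\mathcal{C}}$, so each preimage is the vertex set of a section of $\mathcal{C}'$. By \cref{lem:CPLuniquecurve} applied with level $\phi(\gamma)$, both sections lie inside $L_\gamma$ (since $\gamma$ is the unique curve of colour $\phi(\gamma)$ through either of them). In particular, every vertex of $\psi^{-1}(x_1)$ and of $\psi^{-1}(x_2)$ is a dummy vertex of $\mathcal{C}'$ (it lies in a section, which by definition contains no element of $\mathcal{E}_{\mathcal{C}}$) of level exactly $\phi(\gamma)$.

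Finally, I apply the contraction rule defining $\mathcal{C}^\phi$: the vertices $w_{j-1}, w_j$ are consecutive along $\gamma$, both lie in $L_\gamma \setminus \mathcal{E}_{\mathcal{C}}$, and both have level $\phi(\gamma)$. Therefore the edge $w_{j-1} w_j$ of $\mathcal{C}'$ is contracted when forming $\mathcal{C}^\phi$, giving $\psi(w_{j-1}) = \psi(w_j)$, i.e., $x_1 = x_2$. This contradicts the fact that consecutive vertices of $W_\gamma$ are distinct (all equal consecutive entries in $(\psi(w_0),\dots,\psi(w_r))$ are collapsed when forming $W_\gamma$).

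I do not anticipate any serious obstacle: the proof is essentially bookkeeping that traces the definitions of sections, of the contraction rule, and of $W_\gamma$. The only mild subtlety is verifying that $w_{j-1}$ and $w_j$ really are dummy vertices (so that the contraction rule applies), which follows because their images $x_1, x_2$ lie outside $\mathcal{E}_{\mathcal{C}}$ and hence their preimages are sections, which contain no endpoints.
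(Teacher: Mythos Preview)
Your proposal is correct and follows essentially the same approach as the paper's proof: both argue by contradiction, locate consecutive vertices $w_{j-1}, w_j$ in $L_\gamma$ mapping to $x_1, x_2$, observe that both are dummy vertices of level $\phi(\gamma)$, and conclude that the edge between them was contracted in forming $\mathcal{C}^\phi$, forcing $x_1 = x_2$. The paper phrases the final step in the language of sections (two distinct sections of $L_\gamma$ cannot have consecutive dummy vertices), whereas you invoke the contraction rule directly; these are equivalent, and your appeal to \cref{lem:CPLuniquecurve} is harmless though not strictly needed, since $\level(w_{j-1}) = \level(x_1) = \phi(\gamma)$ already follows from the definition of level in $\mathcal{C}^\phi$.
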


\begin{proof}

Assume for the sake of contradiction that some consecutive vertices $x, y$ in $W_{\gamma}$ have level $\phi(\gamma)$. By definition of $W_{\gamma}$, $x \neq y$. Since $\level(x) = \level(y) = \phi(\gamma)$, $\psi^{-1}(x)$ and $\psi^{-1}(y)$ are the vertex sets of some distinct sections $S_{1}$ and $S_{2}$ of $\gamma$. By definition of $W_{\gamma}$, there exist two dummy vertices $d_{x} \in S_{1}$, $d_{y} \in S_{2}$ such that $\psi(d_{x}) = x$, $\psi(d_{y}) = y$, and $d_{x}$, $d_{y}$ are consecutive vertices in the path $L_{\gamma}$. By definition of sections, no two dummy vertices of distinct sections of $L_{\gamma}$ are consecutive in $L_{\gamma}$, a contradiction.
\end{proof}

The next lemma is an ingredient in the proofs of our main results, \cref{thm:intromain,outerstringplane,outerstringsurfaces}. The proofs of these results consider models of string graphs in $H \boxtimes K_{s}$ for some $H$ and $s$. Throughout, we assume that $V(K_{s}) = \{1, \dots, s\}$.

\begin{lem} \label{lem:CPL} Let $G$ be a string graph without isolated vertices and $\mathcal{C} := \{\gamma_{v} : v \in V(G)\}$ be a string representation of $G$, where each curve $\gamma_{v}$ represents $v$. Let $\phi$ be an ordered $t$-colouring of $\mathcal{C}$ such that for any curve $\gamma_{v} \in \mathcal{C}$ and for any fragment $\alpha$ of $\gamma_{v}$, there are at most $d$ curves of colour greater than $\phi(\gamma_{v})$ that cross~$\alpha$. Then there exists a model $\mu$ of $G$ in $(\mathcal{C}^{\phi} - \mathcal{E}_{\mathcal{C}}) \boxtimes K_{d + 1}$ such that for each $v \in V(G)$, the projection of $\mu(v)$ into $\mathcal{C}^{\phi} - \mathcal{E}_{\mathcal{C}}$ is exactly $W_{\gamma_{v}} \setminus \mathcal{E}_{\mathcal{C}}$.
\end{lem}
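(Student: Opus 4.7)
The plan is to build $\mu$ by assigning each vertex $x \in V(\mathcal{C}^\phi) \setminus \mathcal{E}_\mathcal{C}$ a palette of at most $d+1$ colours in $\{1,\dots,d+1\}$, one for each curve $\gamma_v \in \mathcal{C}$ whose walk $W_{\gamma_v}$ passes through $x$. Concretely, for each such $x$ let $S(x) := \{v \in V(G) : x \in W_{\gamma_v}\}$, fix an injection $c_x : S(x) \to \{1,\dots,d+1\}$, and then set
\[
\mu(v) \;:=\; \{(x, c_x(v)) : x \in W_{\gamma_v} \setminus \mathcal{E}_\mathcal{C}\}.
\]
By construction, the projection of $\mu(v)$ into $\mathcal{C}^\phi - \mathcal{E}_\mathcal{C}$ is exactly $W_{\gamma_v} \setminus \mathcal{E}_\mathcal{C}$, as required.

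The key step is showing $|S(x)| \le d+1$ so that the injections $c_x$ exist. Recall that $\psi^{-1}(x)$ is the vertex set of a section of $L_\gamma$ for a unique $\gamma \in \mathcal{C}$ with $\phi(\gamma) = \level(x)$ (namely, the curve given by \cref{lem:CPLuniquecurve}). Every vertex of this section is a dummy vertex on $L_\gamma$ of level $\phi(\gamma)$, i.e., a crossing of $\gamma$ with some curve of colour strictly greater than $\phi(\gamma)$, and all such vertices lie on a common fragment $\alpha$ of $\gamma$. A curve $\gamma_v \in \mathcal{C}$ satisfies $x \in W_{\gamma_v}$ iff some vertex of $\psi^{-1}(x)$ lies on $L_{\gamma_v}$; the only such curves are $\gamma$ itself and those of colour greater than $\phi(\gamma)$ that cross $\alpha$. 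By hypothesis, the latter set has size at most $d$, so $|S(x)| \le d+1$.

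The remaining verifications are routine. Non-emptiness: since $v$ is not isolated, $\gamma_v$ has at least one crossing, so $L_{\gamma_v}$ contains a dummy vertex, giving a vertex of $W_{\gamma_v}$ outside $\mathcal{E}_\mathcal{C}$. Connectedness: by the standing assumption that endpoints of curves do not lie on other curves, the only vertices of $W_{\gamma_v}$ inside $\mathcal{E}_\mathcal{C}$ are the (at most two) endpoints of $\gamma_v$; hence $W_{\gamma_v} \setminus \mathcal{E}_\mathcal{C}$ is a walk in $\mathcal{C}^\phi - \mathcal{E}_\mathcal{C}$, and consecutive vertices of this walk yield adjacent vertices in the strong product $(\mathcal{C}^\phi - \mathcal{E}_\mathcal{C}) \boxtimes K_{d+1}$ regardless of colour, so $\mu(v)$ is connected. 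Disjointness: if $(x,i) \in \mu(v) \cap \mu(w)$, then $c_x(v) = c_x(w) = i$, forcing $v=w$ by the injectivity of $c_x$. Edge preservation: if $vw \in E(G)$, then $\gamma_v$ and $\gamma_w$ cross at some dummy vertex $e \in V(\mathcal{C}') \setminus \mathcal{E}_\mathcal{C}$, and $x := \psi(e)$ lies in $W_{\gamma_v} \cap W_{\gamma_w}$ with $x \notin \mathcal{E}_\mathcal{C}$; since $c_x$ is injective, $(x, c_x(v)) \in \mu(v)$ and $(x, c_x(w)) \in \mu(w)$ are distinct vertices that are adjacent in $(\mathcal{C}^\phi - \mathcal{E}_\mathcal{C}) \boxtimes K_{d+1}$ via the $K_{d+1}$-factor.

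The main obstacle is really the $|S(x)| \le d+1$ bound, and this is entirely a matter of unpacking the definitions: identifying $\psi^{-1}(x)$ as a section, locating its unique hosting curve $\gamma$ via \cref{lem:CPLuniquecurve}, and translating the fragment-degree hypothesis into a bound on distinct crossing curves. All other steps are clean bookkeeping once the colour palette is in place.
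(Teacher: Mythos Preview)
Your proposal is correct and follows essentially the same approach as the paper: define the set $S(x)$ (the paper calls it $B_x$), bound its size by $d+1$ via the section/fragment correspondence, fix an injection into $\{1,\dots,d+1\}$, and set $\mu(v)$ accordingly. Your verification of the model axioms is slightly more detailed than the paper's (in particular the connectedness argument), but the structure and the key idea are identical.
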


\begin{proof}

For each $x \in V(\mathcal{C}^{\phi}) \setminus \mathcal{E}_{\mathcal{C}}$, let $B_{x} := \{v \in V(G) : x \in W_{\gamma_{v}}\}$. Recall that $\psi^{-1}(x)$ is the vertex set of some section $S$ of $L_{\gamma_{u}}$ for some $u \in V(G)$. By definition of $W_{\gamma_{v}}$, $x \in W_{\gamma_{v}}$ for some $v \in V(G)$ only if $L_{\gamma_{v}} \cap S \neq \emptyset$. Therefore, $x \in W_{\gamma_{v}}$ for some $v \in V(G)$ only if $v = u$ or $\gamma_{v}$ crosses the fragment $\alpha$ of $\gamma_{u}$ that corresponds to $S$. By definition of fragments, only the curves in $\mathcal{C}$ of colour greater than $i$ can cross $\alpha$. By assumption, the number of such curves is at most $d$. Thus $|B_{x}| \leqslant d + 1$ for each $x \in V(\mathcal{C}^{\phi}) \setminus \mathcal{E}_{\mathcal{C}}$.

For each $x \in V(\mathcal{C}^{\phi}) \setminus \mathcal{E}_{\mathcal{C}}$, let $\lambda_{x} : B_{x} \rightarrow \{1, \dots, d + 1\}$ be an injective function. For each $v \in V(G)$, define $\mu(v) := \{(x, \lambda_{x}(v)) : x \in W_{\gamma_{v}} \setminus \mathcal{E}_{\mathcal{C}}\}$. By construction, the projection of $\mu(v)$ into $\mathcal{C}^{\phi} - \mathcal{E}_{\mathcal{C}}$ is exactly $W_{\gamma_{v}} \setminus \mathcal{E}_{\mathcal{C}}$. Since $W_{\gamma_{v}} \setminus \mathcal{E}_{\mathcal{C}} \neq \emptyset$ for each $\gamma_{v} \in \mathcal{C}$, the subgraph of $(\mathcal{C}^{\phi} - \mathcal{E}_{\mathcal{C}}) \boxtimes K_{d + 1}$ induced by $\mu(v)$ is non-empty and connected. Since the functions $\lambda_{x}$ are injective, $\mu(v) \cap \mu(w) = \emptyset$ for all distinct $v, w \in V(G)$.

Let $vw \in E(G)$ be an edge. Then $\gamma_{v}$ and $\gamma_{w}$ cross. Hence there exists a vertex $y \in \mathcal{C}'$ such that $y \in L_{\gamma_{v}} \cap L_{\gamma_{w}}$. Therefore $\psi(y) \in W_{\gamma_{v}} \cap W_{\gamma_{w}}$. Hence $a:= (\psi(y), i) \in \mu(v)$ and $b:= (\psi(y), j) \in \mu(w)$ for some $i, j \in \{1, \dots, d + 1\}$. Therefore $ab \in E((\mathcal{C}^{\phi} - \mathcal{E}_{\mathcal{C}}) \boxtimes K_{d + 1})$ for some $a \in \mu(v)$ and $b \in \mu(w)$. Thus $\mu$ is a model of $G$ in $(\mathcal{C}^{\phi} - \mathcal{E}_{\mathcal{C}}) \boxtimes K_{d + 1}$.
\end{proof}

The next lemma bounds the distance between two vertices in the coloured planarisation and is used in the proof of \cref{thm:intromain}. The proof relies on the following definitions about walks. First, let $x$ and $y$ be vertices in a walk $W$ in a graph $H$ (possibly, $x = y$). Then we can enumerate the vertices of $W$ such that $W = (v_{1}, \dots, v_{s})$, $x = v_{i}$, $y = v_{j}$ for some $i \leqslant j$, and $x, y \notin (v_{i + 1}, \dots, v_{j - 1})$. Then $(x, v_{i + 1}, \dots, v_{j - 1}, y)$ is an \defn{$xy$-subwalk} of $W$. Note that there may exist several $xy$-subwalks of~$W$.

Second, let $W$ be a walk in a graph $H$ with distinct endpoints. Let $y$ be one of the endpoints of $W$. Then we can enumerate the vertices of $W$ such that $W = (v_{1}, \dots, v_{s})$ and $y = v_{1}$. Let $a$ be a vertex of $W$ such that $a \neq y$. Let $i \in \{2, \dots, t\}$ be the index such that $a = v_{i}$ and $a \neq v_{j}$ for any $j \in \{1,\dots,i - 1\}$. Then we say that $v_{i - 1}$ is the neighbour of $a$ \defn{towards} $y$ in $W$. Since $v_{1} \neq v_{s}$, the neighbour of $a$ towards $y$ in $W$ is unambiguously defined by $a$, $W$ and $y$. Let $b$ be a vertex of $W$. We say that $b$ is \defn{between} $a$ and $y$ in $W$ if there exists $j \in \{1, \dots, i\}$ such that $b = v_{j}$. In particular, $b$ is between $a$ and $y$ if $b \in \{a, y\}$. Observe that the neighbour of $a$ towards $y$ in $W$ is between $a$ and $y$ in $W$. If $b$ is between $a$ and $y$ in $W$ and a vertex $w$ is between $b$ and $y$ in $W$ then $w$ is between $a$ and $y$ in~$W$.

\begin{lem} \label{lem:CPLdistance} Let $\mathcal{C}$ be a string representation of a string graph $G$ without isolated vertices. Let $\phi$ be an ordered $t$-colouring of $\mathcal{C}$ such that for any curve $\gamma \in \mathcal{C}$, there are at most $k$ curves in $\mathcal{C}$ of colour smaller than $\phi(\gamma)$ that cross $\gamma$. Then for any $\gamma \in \mathcal{C}$ and any $x, y \in W_{\gamma} \setminus \mathcal{E}_{\mathcal{C}}$, we have $\dist_{\mathcal{C}^{\phi}}(x, y) \leqslant (2k + 1)\sum_{j = 0}^{t - 2}k^{j}$.
\end{lem}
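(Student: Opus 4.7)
The plan is to induct on the colour $i := \phi(\gamma)$, proving that for every curve $\gamma$ with $\phi(\gamma)=i$ and every $x, y \in W_\gamma \setminus \mathcal{E}_\mathcal{C}$, $\dist_{\mathcal{C}^\phi}(x,y) \leq D(i)$, where $D(i) := (2k+1)\sum_{j=0}^{i-2}k^j$ (so $D(1)=0$ and $D(i) = k\,D(i-1) + (2k+1)$ for $i \geq 2$). Since $i \leq t$ and $D$ is non-decreasing, this gives the stated bound. For the base case $i=1$, no curve of smaller colour crosses $\gamma$, so the interior of $L_\gamma$ forms a single fragment and a single section, which collapses to one vertex in $\mathcal{C}^\phi$; hence $W_\gamma \setminus \mathcal{E}_\mathcal{C}$ is a singleton and $x = y$.

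For the inductive step, fix $\gamma$ with $\phi(\gamma) = i \geq 2$ and consider the $xy$-subwalk $W$ of $W_\gamma$. I will call a vertex on $W$ a \emph{pivot} if its level is strictly less than $i$. By \cref{lem:CPLuniquecurve}, each pivot $p$ lies on $W_{\gamma_p}$ for a unique curve $\gamma_p$ of colour $\level(p)<i$, and since $\psi^{-1}(p)$ meets $L_\gamma$, the curve $\gamma_p$ must cross $\gamma$. Because at most $k$ curves of colour smaller than $i$ cross $\gamma$, the pivots appearing on $W$ split into at most $k$ \emph{classes} according to their associated lower-colour curve.

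The argument then rests on three observations. First, by \cref{lem:CPLconsecutive}, $x$ is at distance $\leq 1$ from the first pivot $p_x$ on $W$, and $y$ is at distance $\leq 1$ from the last pivot $p_y$ (or $W$ has length at most $1$ and we are already done). Second, two pivots in the same class both lie on $W_{\gamma'}$ for a curve $\gamma'$ of colour less than $i$, so the main inductive hypothesis gives their distance is $\leq D(i-1)$. Third, by \cref{lem:CPLconsecutive} again, consecutive pivots on $W$ from different classes are separated by at most one level-$i$ vertex, and so they are at distance $\leq 2$ in $\mathcal{C}^\phi$.

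I will combine these with a short secondary induction on the number $k'$ of classes of pivots occurring on a subwalk, showing that the distance from the first to the last pivot is at most $k'\,D(i-1) + 2(k'-1)$: shortcut within the class of the first pivot to the \emph{last} pivot of that class in the subwalk (cost $\leq D(i-1)$), take a step of length $\leq 2$ to the first pivot of a new class, and recurse on the remaining $\leq k'-1$ classes. Adding the two endpoint steps near $x$ and $y$ yields $\dist_{\mathcal{C}^\phi}(x,y) \leq 2 + k\,D(i-1) + 2(k-1) \leq (2k+1) + k\,D(i-1) = D(i)$. The main obstacle is arranging the secondary induction cleanly: one must shortcut to the \emph{last} pivot of the current class so that the subsequent pivots genuinely come from strictly fewer classes, allowing the recursion on $k'$ to close.
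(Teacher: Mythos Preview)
Your proposal is correct and follows essentially the same approach as the paper's proof. Both argue by induction on the colour $i=\phi(\gamma)$ with the same base case, and both run a secondary induction that processes one lower-colour crossing curve (your ``class'') at a time: shortcut within that curve's walk using the main inductive hypothesis (cost $\leq D(i-1)$), then spend at most $2$ to reach a pivot belonging to a curve not yet used, and repeat at most $k$ times. The paper packages this secondary induction as a claim building sets $A_j\subseteq\Gamma$ and subwalks $W_j$ with $W_\alpha\cap W_j\subseteq\{y\}$ for $\alpha\in A_j$, which is exactly your ``remaining subwalk contains strictly fewer classes'' invariant phrased in different language.
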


\begin{proof}

Let $h(1) := 0$ and $h(i) := (2k + 1)\sum_{j = 0}^{i - 2}k^{j}$ for any $i \geqslant 2$. Observe that $h(i) = k(h(i - 1) + 2) + 1$. By induction, we prove that for any $\gamma \in \mathcal{C}$ and any $x, y \in W_{\gamma} \setminus \mathcal{E}_{\mathcal{C}}$, $\dist_{\mathcal{C}^{\phi}}(x, y) \leqslant h(\phi(\gamma))$. \cref{lem:CPLdistance} follows from this because $h(\phi(\gamma)) \leqslant h(t) = (2k + 1)\sum_{j = 0}^{t - 2}k^{j}$. We further assume that $x$ and $y$ are distinct, otherwise \cref{lem:CPLdistance} is trivial.

Consider the base case with $\phi(\gamma) = 1$. Note that the level of each vertex of $L_{\gamma} \setminus \mathcal{E}_{\mathcal{C}}$ is exactly $1$. Hence $L_{\gamma} \setminus \mathcal{E}_{\mathcal{C}}$ is a section of $L_{\gamma}$, and the edges of $L_{\gamma} \setminus \mathcal{E}_{\mathcal{C}}$ are contracted in the construction of $\mathcal{C}^{\phi}$. Therefore $|W_{\gamma} \setminus \mathcal{E}_{\mathcal{C}}| = 1$. Thus $x = y$, a contradiction.

Now assume that $\phi(\gamma) = i$ for some $i \in \{2, \dots, t\}$. Let $\Gamma$ be the set of curves of colour smaller than $i$ that cross $\gamma$. By assumption, $|\Gamma| \leqslant k$.

\begin{claim} \label{induction} For any $j \in \{0, \dots, |\Gamma|\}$, there exists a vertex $x_{j} \in W_{\gamma} \setminus \mathcal{E}_\mathcal{C}$ and a set $A_{j} \subseteq \Gamma$ such that:

\begin{itemize}
    \item $\dist_{\mathcal{C}^{\phi}}(x, x_{j}) \leqslant j(h(i - 1) + 2)$,
    \item $|A_{j}| = j$,
    \item there exists an $x_{j}y$-subwalk $W_{j}$ of $W_{\gamma}$ such that for each $\alpha \in A_{j}$, we have $W_{\alpha} \cap W_{j} \subseteq \{y\}$.
\end{itemize}
    
\end{claim}

\begin{proof}
    We prove this claim by induction on $j$. First, consider the base case $j = 0$. \cref{induction} is trivial for $j = 0$, $x_{0} := x$, $A_{0} := \emptyset$ and any $xy$-subwalk $W_{0}$ of $W_{\gamma}$.

    Now assume that $j \in \{1, \dots, |\Gamma|\}$. By the inductive hypothesis (for \cref{induction}), there exist a vertex $x_{j - 1} \in W_{\gamma} \setminus \mathcal{E}_\mathcal{C}$, a set $A_{j - 1} \subseteq \Gamma$ and an $x_{j - 1}y$-subwalk $W_{j - 1}$ of $W_{\gamma}$ such that all three properties in \cref{induction} are satisfied. Since $j \leqslant |\Gamma|$, we have $\Gamma \setminus A_{j - 1} \neq \emptyset$.

    If $x_{j - 1} = y$ then $\dist_{\mathcal{C}^{\phi}}(x, y) \leqslant (j - 1)(h(i - 1) + 2) \leqslant j(h(i - 1) + 2)$. In this case, let $x_{j} := y$, $A_{j} := A_{j - 1} \cup \{\alpha\}$ for any $\alpha \in \Gamma \setminus A_{j - 1}$, and $W_{j}$ be the walk consisting of a single vertex $y$. All three properties in \cref{induction} are satisfied for this choice of $x_{j}$, $A_{j}$, and $W_{j}$.

    Otherwise, $x_{j - 1} \neq y$. Let $u_{j}$ be the neighbour of $x_{j - 1}$ towards $y$ in $W_{j - 1}$.

    If $u_{j} = y$ then $\dist_{\mathcal{C}^{\phi}}(x, y) \leqslant 1 + (j - 1)(h(i - 1) + 2) \leqslant j(h(i - 1) + 2)$. In this case, let $x_{j} := y$, $A_{j} := A_{j - 1} \cup \{\alpha\}$ for any $\alpha \in \Gamma \setminus A_{j - 1}$, and $W_{j}$ be the walk consisting of a single vertex $y$. All three properties in \cref{induction} are satisfied for this choice of $x_{j}$, $A_{j}$, and $W_{j}$.

    Otherwise, $u_{j} \neq y$. Recall that the level of each vertex in $W_{\gamma}$ is at most $\phi(\gamma)$. By definition of $u_{j}$, the vertices $u_{j}$ and $x_{j - 1}$ are consecutive in $W_{j - 1}$ (and hence in $W_{\gamma}$). By \cref{lem:CPLconsecutive}, there exists a vertex $z_{j} \in \{x_{j - 1}, u_{j}\}$ such that $\level(z_{j}) < \phi(\gamma) = i$. So $\dist_{\mathcal{C}^{\phi}}(x_{j - 1}, z_{j}) \leqslant 1$, $z_{j}$ is between $x_{j - 1}$ and $y$ in $W_{j - 1}$, and $z_{j} \neq y$. By \cref{lem:CPLuniquecurve}, there exists a curve $\alpha_{j} \in \mathcal{C}$ such that $\phi(\alpha_{j}) = \level(z_{j}) < i = \phi(\gamma)$ and $z_{j} \in W_{\alpha_{j}} \cap W_{j - 1} \subseteq W_{\alpha_{j}} \cap W_{\gamma}$. So $\alpha_{j}$ crosses $\gamma$, and hence $\alpha_{j} \in \Gamma$. If $\alpha_{j} \in A_{j - 1}$, then by the inductive hypothesis, $W_{\alpha_{j}} \cap W_{j - 1} \subseteq \{y\}$, a contradiction to $z_{j} \in W_{\alpha_{j}} \cap W_{j - 1}$ and $z_{j} \neq y$. Thus $\alpha_{j} \notin A_{j - 1}$.

    Let $a_{j}$ be the first vertex of $W_{j - 1}$ starting at $y$ that is in $W_{\alpha_{j}}$. By the inductive hypothesis (for \cref{lem:CPLdistance}), $\dist_{\mathcal{C}^{\phi}}(a_{j}, z_{j}) \leqslant h(\phi(\alpha_{j})) \leqslant h(i - 1)$. Hence $\dist_{\mathcal{C}^{\phi}}(a_{j}, x) \leqslant \dist_{\mathcal{C}^{\phi}}(a_{j}, z_{j}) + \dist_{\mathcal{C}^{\phi}}(z_{j}, x_{j - 1}) + \dist_{\mathcal{C}^{\phi}}(x_{j - 1}, x) \leqslant h(i - 1) + 1 + (j - 1)(h(i - 1) + 2) \leqslant j(h(i - 1) + 2) - 1$. If $a_{j} = y$ then let $x_{j} := y$, $A_{j} := A_{j - 1} \cup \{\alpha_{j}\}$, and $W_{j}$ be the walk consisting of a single vertex~$y$. All three properties in \cref{induction} are satisfied for this choice of $x_{j}$, $A_{j}$, and $W_{j}$.

    Otherwise, $a_{j} \neq y$. It follows from definition of $a_{j}$ that $y \notin W_{\alpha_{j}}$. Let $x_{j}$ be the neighbour of $a_{j}$ towards $y$ in~$W_{j - 1}$. So $\dist_{\mathcal{C}^{\phi}}(x, x_{j}) \leqslant (j(h(i - 1) + 2) - 1) + 1 = j(h(i - 1) + 2)$. Let  $A_{j} := A_{j - 1} \cup \{\alpha_{j}\}$ and $W_{j}$ be the $x_{j}y$-subwalk of $W_{j - 1}$ such that $x_{j}$ occurs exactly one time in $W_{j}$ (that is, $W_{j}$ is the part of $W_{j - 1}$ between $y$ and the first occurence of $x_{j}$ in $W_{j}$ starting at $y$). All three properties in \cref{induction} are satisfied for this choice of $x_{j}$, $A_{j}$, and $W_{j}$.
\end{proof}

By \cref{induction} (setting $j = |\Gamma|$), there exists a vertex $x_{|\Gamma|} \in W_{\gamma} \setminus \mathcal{E}_{\mathcal{C}}$ such that $\dist_{\mathcal{C}^{\phi}}(x, x_{|\Gamma|}) \leqslant |\Gamma|(h(i - 1) + 2) \leqslant k(h(i - 1) + 2) = h(i) - 1$ and there exists an $x_{|\Gamma|}y$-subwalk $W_{|\Gamma|}$ of $W_{\gamma}$ such that for each $\alpha \in \Gamma$, we have $W_{\alpha} \cap W_{|\Gamma|} \subseteq \{y\}$. If $x_{|\Gamma|} = y$ then we are done. Otherwise, let $r_{0}$ be the neighbour of $x_{|\Gamma|}$ towards $y$ in $W_{|\Gamma|}$. Note that $\dist_{\mathcal{C}^{\phi}}(x, r_0) \leqslant h(i)$. If $r_{0} = y$ then we are done. Now assume that $r_{0} \neq y$. Note that $r_{0}$ and $x_{|\Gamma|}$ are consecutive in $W_{\gamma_v}$. By \cref{lem:CPLconsecutive}, there exists $z \in \{x_{|\Gamma|}, r_{0}\}$ such that $\level(z) < \phi(\gamma) = i$. By \cref{lem:CPLuniquecurve}, there exists a curve $\alpha \in \mathcal{C}$ such that $\phi(\alpha) = \level(z) < i = \phi(\gamma)$ and $z \in W_{\alpha_{j}} \cap W_{\gamma}$. So $\alpha$ crosses $\gamma$, and hence $\alpha \in \Gamma$. This contradicts $W_{\alpha} \cap W_{|\Gamma|} \subseteq \{y\}$.

We have shown that for any $\gamma \in \mathcal{C}$ and any $x, y \in W_{\gamma} \setminus \mathcal{E}_{\mathcal{C}}$, $\dist_{\mathcal{C}^{\phi}}(x, y) \leqslant h(\phi(\gamma))$. Since $h(\phi(\gamma)) \leqslant h(t) = (2k + 1)\sum_{j = 0}^{t - 2}k^{j}$, \cref{lem:CPLdistance} follows.
\end{proof}

\section{Proofs of Main Results}
\label{section:proofs}

This section proves our main results, Theorems \labelcref{thm:intromain}, \labelcref{outerstringplane} and \labelcref{outerstringsurfaces}. The proofs use coloured planarisations (\cref{section:colouredplanarisations}) and the well-known fact that $\tw(G \boxtimes K_{n}) \leqslant (\tw(G) + 1)n - 1$ for every graph $G$ and integer $n \geqslant 1$.

\subsection{Product Structure of String Graphs in Surfaces}

We start with \cref{thm:intromain}, which is our result about product structure of string graphs in surfaces. Our proof is based on the concept of weak shallow minors, recently introduced by \citet{HKW}. To explain this concept, several definitions are needed. Let $H$ be a graph and $A \subseteq V(H)$. The \defn{weak diameter} of $A$ in $H$ is the maximum distance in $H$ between the vertices of $A$; that is, $\max\{\dist_{H}(u,v) : u,v \in A\}$. \citet{HKW} used the following variant of this definition. The \defn{weak radius} of $A$ in $H$ is the minimum non-negative integer $r$ such that for some  $v \in V(H)$ and for every $a \in A$ we have $\dist_{H}(v, a) \leqslant r$. Note that the weak radius of $A$ in $H$ is less than or equal to the weak diameter of $A$ in $H$. A model $\mu$ of a graph $G$ in a graph $H$ is \defn{weak $r$-shallow} if the weak radius of every branch set of $\mu$ in $H$ is at most $r$. If there exists a weak $r$-shallow model of $G$ in $H$, then $G$ is a \defn{weak $r$-shallow minor} of $H$. We use the following result of \citet[Theorem~6.6]{HKW}.

\begin{thm} [\citep{HKW}] \label{thm:RTWmain}
Let $r, g \geqslant 0$ and $c \geqslant 1$ be integers. Let $H$ be a graph of Euler genus $g$ and $G$ be a weak $r$-shallow minor of $H \boxtimes K_c$. Then 
$$\rtw(G)\leqslant (4r + 1)c((2(8r+1)c+3)(2g + 7)^{(6r+2)(2g + 5) - 4} - 1) - 1.$$
\end{thm}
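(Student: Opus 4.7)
The plan is to combine a product structure theorem for graphs of bounded Euler genus with a blocking argument tailored to the weak $r$-shallow structure of the model. The end goal is a containment $G \subseteq (H'' \boxtimes K_{(4r+1)c}) \boxtimes Q$ for a path $Q$ and a graph $H''$ whose treewidth matches the claimed bound; the product $\tw(\cdot \boxtimes K_n) \le (\tw(\cdot)+1)n-1$ then produces the stated row treewidth bound.

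First, I would apply a product structure theorem for graphs of Euler genus $g$ to write $H \subseteq J \boxtimes P$ for some path $P$ and some graph $J$ whose treewidth is bounded in terms of $g$. Since strong product commutes with the clique factor, $H \boxtimes K_c \subseteq (J \boxtimes K_c) \boxtimes P$, and $\tw(J \boxtimes K_c) \le (\tw(J)+1)c-1$. The $(2g+7)$ base in the final bound reflects (an iterated form of) the treewidth of $J$.

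Second, I would block the path $P$. Let $\mu$ be a weak $r$-shallow model of $G$ in $(J \boxtimes K_c) \boxtimes P$. For every branch set $\mu(v)$, its projection onto $P$ is contained in an interval of length at most $2r+1$, since the weak radius in the host graph bounds the span of the projection. Partition $V(P)$ into consecutive blocks of $4r+1$ vertices; every branch set then lies in the union of at most two consecutive blocks. Collapsing each block to a single vertex yields a quotient path $Q$, and the host graph embeds into a thickened product whose super-layer has treewidth at most $(\tw(J)+1)c(4r+1)-1$.

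Third, I would build the row decomposition of $G$ by contracting each branch set of $\mu$ within its super-layer or pair of adjacent super-layers. Branch sets entirely inside a single super-layer yield an intra-bag minor; branch sets straddling two super-layers require recording a bounded-size interface of vertices in each super-layer. A pigeonhole argument on the number of distinct interface profiles (accounting for which copy of $K_c$ is used and the local BFS position of the straddling vertex) provides the $(2(8r+1)c+3)$ factor, while an outer strong product with $K_{(4r+1)c}$ (to encode the at most $(4r+1)c$ branch sets whose projections meet a given block boundary) produces the $(4r+1)c$ factor.

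The main obstacle is this third step: handling branch sets that straddle block boundaries without inflating the treewidth of $H''$ beyond the claimed bound. To reach the exponent $(6r+2)(2g+5)-4$, one likely must iterate the bounded-genus structure theorem: after the first application, the interface graph still has a bounded-width structure on which the theorem can be applied again to refine the decomposition further. Balancing the treewidth growth at each step against the depth of the recursion is the delicate numerical part, and it is precisely this iterative refinement that accounts for the rather intricate exponential-in-$g$-and-$r$ form of the final bound.
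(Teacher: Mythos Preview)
The paper does not prove this theorem. It is quoted as Theorem~6.6 of \citet{HKW} and used as a black box in the proof of \cref{lem:PS}; there is no argument for it in the present paper to compare your proposal against.

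That said, your outline is more a plan than a proof. Steps~1 and~2 are standard and sound: passing from $H$ to $J\boxtimes P$ via a genus-$g$ product structure theorem, then blocking $P$ into intervals of length $4r+1$ so that each branch set spans at most two consecutive blocks. But Step~3, which you yourself flag as the obstacle, is where all the work lies, and you have not actually done it. Saying that branch sets straddling two blocks ``require recording a bounded-size interface'' and that a ``pigeonhole argument on interface profiles'' yields the factor $2(8r+1)c+3$ is not an argument; you have not defined the interface, shown why the number of profiles is bounded by that expression, or explained how the contracted branch sets assemble into a graph of the claimed treewidth. Your final paragraph then speculates that an iterated application of the genus structure theorem is needed to reach the exponent $(6r+2)(2g+5)-4$, but gives no recursion, no invariant, and no termination argument. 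As written, the proposal identifies the right shape of the answer but does not supply the mechanism that produces the specific bound; the gap is precisely the part you call ``delicate''.
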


\begin{lem} \label{lem:PS} Let $G$ be a string graph in a surface $\Sigma$ with Euler genus $g$ and $\mathcal{C} = \{\gamma_{v} : v \in V(G)\}$ be a string representation of $G$ in $\Sigma$, where each curve $\gamma_{v}$ represents $v$. Suppose that for some $t \geqslant 2$ there exists an ordered $t$-colouring $\phi$ of $G$ such that:

\begin{itemize}
    \item for any curve $\gamma_{v} \in \mathcal{C}$ and for any fragment $\alpha$ of $\gamma_{v}$, there are at most $d$ curves of colour greater than $\phi(\gamma_{v})$ that cross $\alpha$,
    \item for any curve $\gamma_{v} \in \mathcal{C}$, there are at most $k$ curves in $\mathcal{C}$ of colour smaller than $\phi(\gamma_{v})$ that cross $\gamma_{v}$.
\end{itemize}

Then for $r := (2k + 1)\sum_{j = 0}^{t - 2}k^{j}$, we have

\begin{align*}
\ltw(G) &\leqslant (4r + 1)(d + 1)(2g + 3) \text{ and} \\
\rtw(G) &\leqslant (4r + 1)(d + 1)((2(8r+1)(d + 1) + 3)(2g + 7)^{(6r+2)(2g + 5) - 4} - 1) - 1.
\end{align*}

\end{lem}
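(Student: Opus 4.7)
My plan is to combine \cref{lem:CPL} and \cref{lem:CPLdistance} with \cref{thm:RTWmain} of \citet{HKW} and the standard layered-treewidth machinery for bounded-genus graphs. Since adding isolated vertices changes neither $\rtw(G)$ nor $\ltw(G)$, I may assume $G$ has no isolated vertices, so that the coloured-planarisation framework of \cref{section:colouredplanarisations} applies directly.

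First, I apply \cref{lem:CPL} to $\mathcal{C}$ and $\phi$ to obtain a model $\mu$ of $G$ in $(\mathcal{C}^{\phi} - \mathcal{E}_{\mathcal{C}}) \boxtimes K_{d+1}$ whose branch sets project onto $W_{\gamma_{v}} \setminus \mathcal{E}_{\mathcal{C}}$. Because $\mathcal{C}^{\phi}$ is drawn in $\Sigma$ without crossings, $\mathcal{C}^{\phi}$ has Euler genus at most $g$, and it will be convenient to view $\mu$ as a model inside the larger host $\mathcal{C}^{\phi} \boxtimes K_{d+1}$ (this is harmless and sidesteps a host-graph mismatch below). I then invoke \cref{lem:CPLdistance}: for every $v$ and all $x, y \in W_{\gamma_{v}} \setminus \mathcal{E}_{\mathcal{C}}$, $\dist_{\mathcal{C}^{\phi}}(x, y) \leqslant r := (2k+1)\sum_{j=0}^{t-2} k^{j}$, so the weak radius of the projection of $\mu(v)$ in $\mathcal{C}^{\phi}$ is at most $r$. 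Since distances in a strong product equal the maximum coordinate distance and $K_{d+1}$ has diameter $1$, the weak radius of $\mu(v)$ in $\mathcal{C}^{\phi} \boxtimes K_{d+1}$ is also at most $r$. Hence $G$ is a weak $r$-shallow minor of $\mathcal{C}^{\phi} \boxtimes K_{d+1}$.

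The row-treewidth bound then drops out of \cref{thm:RTWmain} applied with $H := \mathcal{C}^{\phi}$ and $c := d+1$: substituting $c = d+1$ into the formula there reproduces the expression in the statement verbatim. For the layered-treewidth bound I use the classical fact that every graph of Euler genus $g$ has layered treewidth at most $2g+3$, so the standard tensoring argument gives $\ltw(\mathcal{C}^{\phi} \boxtimes K_{d+1}) \leqslant (d+1)(2g+3)$; combining this with the well-known $(4r+1)$ multiplicative blow-up of layered treewidth under weak $r$-shallow minors (contract each branch set onto a vertex witnessing its weak radius, and merge a window of $4r+1$ consecutive BFS layers of the host into one layer of $G$) yields $\ltw(G) \leqslant (4r+1)(d+1)(2g+3)$.

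The main subtlety I anticipate is the mismatch between \cref{lem:CPLdistance}, which bounds distances in $\mathcal{C}^{\phi}$, and \cref{lem:CPL}, whose model lives in $(\mathcal{C}^{\phi} - \mathcal{E}_{\mathcal{C}}) \boxtimes K_{d+1}$; this is precisely why I pass to the superhost $\mathcal{C}^{\phi} \boxtimes K_{d+1}$ in the step above, which is legitimate since $\mathcal{C}^{\phi}$ still has Euler genus at most $g$. Apart from this, the argument is a routine assembly of three black boxes (\cref{lem:CPL}, \cref{lem:CPLdistance}, and \cref{thm:RTWmain}) together with the bounded-genus layered-treewidth bound; the only calculation is the syntactic substitution $c = d+1$ in \cref{thm:RTWmain} to match the claimed constants.
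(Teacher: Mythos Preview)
Your proposal is correct and follows essentially the same route as the paper: reduce to the isolated-vertex-free case, apply \cref{lem:CPL} to model $G$ in a product with $K_{d+1}$, use \cref{lem:CPLdistance} to bound the weak radius of branch sets by $r$, then invoke \cref{thm:RTWmain} for the $\rtw$ bound and the $(4r+1)$ layered-treewidth blow-up (which the paper cites as \citet[Lemma~6.1]{HKW}) together with the Euler-genus bound $\ltw \leqslant 2g+3$ for the $\ltw$ bound. Your explicit passage to the superhost $\mathcal{C}^{\phi} \boxtimes K_{d+1}$ to reconcile \cref{lem:CPLdistance} (distances in $\mathcal{C}^{\phi}$) with the model living in $(\mathcal{C}^{\phi} - \mathcal{E}_{\mathcal{C}}) \boxtimes K_{d+1}$ is a clean way to handle a point the paper leaves implicit.
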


\begin{proof} The row treewidth of a graph is the maximum row treewidth of its connected components, so we may assume that $G$ does not have isolated vertices.

By \cref{lem:CPL}, there exists a model $\mu$ of $G$ in $(\mathcal{C}^{\phi} - \mathcal{E}_{\mathcal{C}}) \boxtimes K_{d + 1}$ such that for each $v \in V(G)$, the projection of $\mu(v)$ into $\mathcal{C}^{\phi} - \mathcal{E}_{\mathcal{C}}$ is exactly $W_{\gamma_v} \setminus \mathcal{E}_{\mathcal{C}}$. By \cref{lem:CPLdistance}, every branch set of $\mu$ has weak diameter (and hence weak radius) at most $r = (2k + 1)\sum_{j = 0}^{t - 2}k^{j}$. Therefore $\mu$ is a weak $r$-shallow model, and hence $G$ is a weak $r$-shallow minor of $(\mathcal{C}^{\phi} - \mathcal{E}_{\mathcal{C}}) \boxtimes K_{d + 1}$. Recall that the graph $\mathcal{C}^{\phi}$ has Euler genus at most $g$. \citet{DMW17} proved that every graph with Euler genus $g$ has layered treewidth at most $2g + 3$. So $\ltw(\mathcal{C}^{\phi}) \leqslant 2g + 3$. \citet[Lemma~6.1]{HKW} proved that for any graph $H$ and any weak $r$-shallow minor $J$ of $H$, $\ltw(J) \leqslant (4r + 1)\ltw(H)$. Therefore $\ltw(G) \leqslant (4r + 1)(d + 1)(2g + 3)$. The bound on $\rtw(G)$ follows from \cref{thm:RTWmain}.
\end{proof}

Note that \cref{lem:PS} works in the setting that is slightly more general than the `bounded maximum degree' assumption. The following result, which implies \cref{thm:intromain}, follows from \cref{lem:PS}.

\begin{thm} \label{thm:PSspecificbounds} Let $G$ be a string graph with maximum degree at most $\Delta \geqslant 1$ in a surface with Euler genus $g$. Then for $r := (2\Delta + 1)\sum_{j = 0}^{\Delta - 1}\Delta^{j}$, we have
\begin{align*}
\ltw(G) &\leqslant (4r + 1)(\Delta + 1)(2g + 3) \text{ and} \\
\rtw(G) &\leqslant (4r + 1)(\Delta + 1)((2(8r+1)(\Delta + 1) + 3)(2g + 7)^{(6r+2)(2g + 5) - 4} - 1) - 1.
\end{align*}
\end{thm}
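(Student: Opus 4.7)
The plan is to derive Theorem~\ref{thm:PSspecificbounds} as a direct corollary of Lemma~\ref{lem:PS}. Since row treewidth and layered treewidth of a graph are the maxima over its connected components, I may assume $G$ has no isolated vertices. Fix a string representation $\mathcal{C} = \{\gamma_v : v \in V(G)\}$ of $G$ in a surface with Euler genus $g$, where each curve $\gamma_v$ represents $v$.

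The central step is to produce an ordered $(\Delta+1)$-colouring $\phi$ of $\mathcal{C}$ realising the parameters $t = \Delta+1$, $k = \Delta$, and $d = \Delta$. Since $G$ has maximum degree at most $\Delta$, a greedy argument yields a proper $(\Delta+1)$-colouring of $G$. Under the identification $v \leftrightarrow \gamma_v$, and using the defining property that $\gamma_v$ crosses $\gamma_w$ if and only if $vw \in E(G)$, this proper colouring of $G$ is precisely an ordered $(\Delta+1)$-colouring of $\mathcal{C}$.

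Next I would verify the two hypotheses of Lemma~\ref{lem:PS}. For any curve $\gamma_v \in \mathcal{C}$, the number of other curves of $\mathcal{C}$ that cross $\gamma_v$ equals $d_G(v) \leqslant \Delta$. In particular, at most $\Delta$ such curves have colour smaller than $\phi(\gamma_v)$, so taking $k = \Delta$ suffices. Similarly, each fragment $\alpha$ of $\gamma_v$ is a subcurve of $\gamma_v$, so at most $d_G(v) \leqslant \Delta$ curves of colour greater than $\phi(\gamma_v)$ can cross $\alpha$, so taking $d = \Delta$ suffices.

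Applying Lemma~\ref{lem:PS} with $t = \Delta+1$, $k = \Delta$, and $d = \Delta$ gives exactly the claimed bounds: the value $r = (2k+1)\sum_{j=0}^{t-2} k^j$ becomes $(2\Delta+1)\sum_{j=0}^{\Delta-1}\Delta^j$ as in the theorem statement, and substituting $d+1 = \Delta+1$ into the layered-treewidth and row-treewidth bounds of Lemma~\ref{lem:PS} yields the two displayed inequalities verbatim. The proof is essentially a translation; the only real observation is that bounded maximum degree simultaneously bounds the chromatic number (giving $t$) and bounds the number of crossings on every curve (giving both $k$ and $d$). There is no genuine obstacle.
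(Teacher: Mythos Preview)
Your proposal is correct and follows essentially the same route as the paper's proof: take a proper $(\Delta+1)$-colouring of $G$ (ordered arbitrarily) as the ordered colouring of $\mathcal{C}$, observe that maximum degree at most $\Delta$ forces both $k \leqslant \Delta$ and $d \leqslant \Delta$, and apply Lemma~\ref{lem:PS} with $t=\Delta+1$, $k=\Delta$, $d=\Delta$. Your write-up is in fact more explicit than the paper's, which dispatches the whole argument in two sentences.
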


\begin{proof} Let $\phi$ be an ordered $(\Delta + 1)$-colouring of a string representation of $G$, where we `order' the colours arbitrarily. The result follows from \cref{lem:PS}, setting $t := \Delta + 1$, $d := \Delta$, and $k := \Delta$.
\end{proof}

\subsection{Treewidth of Outerstring Graphs}

This section proves our results about the treewidth of outerstring graphs (\cref{outerstringplane}) and $(g, c)$-outerstring graphs (\cref{outerstringsurfaces}). 

To prove \cref{outerstringplane}, we use the following classical result of \citet{RS-III}.

\begin{thm} [\citep{RS-III}] \label{PlanarBoundedRadius} Every planar graph with radius $r$ has treewidth at most $3r + 1$.
\end{thm}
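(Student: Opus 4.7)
The plan is to exhibit a tree decomposition whose bags are unions of root-to-vertex paths in a BFS tree rooted at a center of $G$. First, I would reduce to the case where $G$ is a planar triangulation: one can add edges in a planar way to obtain a triangulation $G'$, and this operation cannot decrease the treewidth or increase the radius, so it suffices to bound $\tw(G')$. Let $v \in V(G')$ be a vertex of eccentricity at most $r$, and let $T$ be a BFS spanning tree of $G'$ rooted at $v$; then $T$ has depth at most $r$. For each $u \in V(G')$, let $P(u)$ denote the unique $uv$-path in $T$, which has at most $r+1$ vertices.

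Next, I would construct the tree decomposition as follows. The nodes are the triangular faces of the planar embedding of $G'$. For each face $f$ with vertex set $\{x_1, x_2, x_3\}$, define the bag $B_f := V(P(x_1)) \cup V(P(x_2)) \cup V(P(x_3))$. Since all three paths contain $v$, we have $|B_f| \leqslant 3(r+1) - 2 = 3r + 1$, so the eventual width is at most $3r$, which is within the stated bound $3r + 1$. The underlying tree $\mathcal{T}$ has two face-nodes adjacent iff the corresponding faces share an edge of $G'$ that is not in $T$. By Euler's formula (a triangulation on $n$ vertices has $3n-6$ edges and $2n-4$ triangular faces, so exactly $2n-5$ non-tree edges) and the standard planar-duality fact that the non-tree edges of a spanning tree of a planar graph form a spanning tree of the dual, $\mathcal{T}$ is indeed a tree.

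The remaining task, and the main technical obstacle, is to verify the tree-decomposition axioms. The edge-cover property is routine: a tree edge $xy \in E(T)$ lies in $P(y)$ (for $y$ the deeper endpoint) and hence in every bag incident to it, while a non-tree edge $xy$ is shared by two faces whose bags both contain $x$ and $y$. The subtle part is the connected-subtree property: for each $u \in V(G')$, the set $\mathcal{F}_u := \{f : u \in B_f\}$ must induce a connected subgraph of $\mathcal{T}$. Observing that $u \in B_f$ iff $f$ has a vertex in $V(T_u)$ (where $T_u$ is the subtree of $T$ rooted at $u$), and using that $T_u$ is connected inside the planar embedding of $G'$, a planar-duality argument shows that the dual boundary of the region spanned by $T_u$ is connected in $\mathcal{T}$. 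This topological step---linking the connected-subtree property to planar duality, via the fact that $\mathcal{T}$ is itself a spanning tree of the dual---is where the planarity hypothesis is essentially used.
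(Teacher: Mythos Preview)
The paper does not prove this statement; it is quoted as a classical result of \citet{RS-III} and used as a black box in the proof of \cref{thm:planarouterstring}. So there is no ``paper's own proof'' to compare your proposal against.

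That said, your proposal is precisely the standard argument behind this theorem (the BFS-tree/cotree construction that appears in Robertson--Seymour and in later expositions such as Eppstein's). The reduction to a triangulation, the bag definition via root-paths, the cotree structure on faces, and your bag-size computation $|B_f|\leqslant 3(r+1)-2=3r+1$ (hence width $\leqslant 3r$) are all correct. Your identification of the connected-subtree axiom as the delicate step is apt, and your reduction ``$u\in B_f$ iff $f$ meets $V(T_u)$'' is right. A fully written proof would flesh out the final duality step: one clean route is to show that for any non-tree edge $e$, the two components of $\mathcal{T}-e^*$ correspond to the two sides of the fundamental cycle of $e$ in $T$, and then observe that $T_u$ lies entirely on one side of any such cycle not passing through~$u$, while cycles through $u$ have faces on both sides that are incident to~$u$. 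Your sketch points in the right direction but stops short of this.
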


The following result, which implies \cref{outerstringplane}, works in a setting that is slightly more general than the `$(t, d)$-degenerate' assumption of \cref{outerstringplane}.

\begin{thm} \label{thm:planarouterstring} Let $G$ be an outerstring graph. Suppose that there exists an outerstring diagram $(\mathcal{C}, D)$ of $G$ and an ordered $t$-colouring $\phi$ of $\mathcal{C}$ such that for any curve $\gamma \in \mathcal{C}$ and for any fragment $\alpha$ of $\gamma$, there are at most $d$ curves of colour greater than $\phi(\gamma)$ that cross $\alpha$. Then $\tw(G) \leqslant (3t - 1)(d + 1) - 1$.
\end{thm}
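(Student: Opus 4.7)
The plan is to apply \cref{lem:CPL} to the outerstring diagram, obtaining a minor model of $G$ in $(\mathcal{C}^{\phi} - \mathcal{E}_{\mathcal{C}}) \boxtimes K_{d+1}$, and then build a planar graph $H$ of radius at most $t - 1$ into which $\mathcal{C}^{\phi} - \mathcal{E}_{\mathcal{C}}$ embeds as a subgraph. Combining \cref{PlanarBoundedRadius} with the standard inequality $\tw(H \boxtimes K_{d+1}) \leq (\tw(H) + 1)(d+1) - 1$ then gives $\tw(G) \leq (3(t-1) + 2)(d+1) - 1 = (3t - 1)(d+1) - 1$, as required.

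Let $Y \subseteq \mathcal{E}_{\mathcal{C}}$ denote the set of grounded endpoints of curves in $\mathcal{C}$. Since every curve in the outerstring diagram has at least one endpoint on $\partial D$, each curve contributes to $Y$, so \cref{lem:CPLlocaldistance} applies with this choice of $Y$. Each endpoint $v \in \mathcal{E}_{\mathcal{C}}$ has degree exactly one in $\mathcal{C}^{\phi}$, because $v$ is the end of a unique curve and no incident edge of $\mathcal{C}'$ is contracted; write $u(v)$ for its unique neighbour, noting that $u(v) \in V(\mathcal{C}^{\phi}) \setminus \mathcal{E}_{\mathcal{C}}$. I would define $H$ to be the graph obtained from $\mathcal{C}^{\phi} - \mathcal{E}_{\mathcal{C}}$ by adding one new vertex $z$ adjacent to $u(v)$ for every $v \in Y$.

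The graph $H$ is planar: in the drawing of $\mathcal{C}^{\phi}$ inherited from the outerstring diagram every vertex of $Y$ lies on $\partial D$ while the interior of $D$ is empty, so I can place $z$ inside $D$, draw arcs from $z$ to each $v \in Y$ through $D$, contract those edges (so $z$ becomes adjacent to each $u(v)$), and delete the remaining leaves of $\mathcal{E}_{\mathcal{C}} \setminus Y$. To bound the radius of $H$ from $z$, observe that endpoints are leaves of $\mathcal{C}^{\phi}$, so $\dist_{\mathcal{C}^{\phi} - \mathcal{E}_{\mathcal{C}}}(x, y) = \dist_{\mathcal{C}^{\phi}}(x, y)$ for all $x, y \in V(\mathcal{C}^{\phi}) \setminus \mathcal{E}_{\mathcal{C}}$. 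For any such $x$, \cref{lem:CPLlocaldistance} supplies some $v \in Y$ with $\dist_{\mathcal{C}^{\phi}}(x, v) \leq t - 1$; since the last edge of a shortest $xv$-path is necessarily $u(v)v$, we obtain $\dist_{\mathcal{C}^{\phi} - \mathcal{E}_{\mathcal{C}}}(x, u(v)) \leq t - 2$, and then $\dist_{H}(z, x) \leq t - 1$ via the edge $zu(v)$.

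Now \cref{PlanarBoundedRadius} yields $\tw(H) \leq 3(t-1) + 1 = 3t - 2$. Since $(\mathcal{C}^{\phi} - \mathcal{E}_{\mathcal{C}}) \boxtimes K_{d+1}$ is a subgraph of $H \boxtimes K_{d+1}$, the graph $G$ is a minor of $H \boxtimes K_{d+1}$, which gives $\tw(G) \leq \tw(H \boxtimes K_{d+1}) \leq (3t - 1)(d+1) - 1$. The main subtlety is squeezing the radius down to $t - 1$ rather than $t$; this is exactly why $z$ is attached to the near-neighbours $u(v)$ instead of to the endpoints $v$ themselves, and why the observation that endpoints are leaves of $\mathcal{C}^{\phi}$ (letting us drop the final edge of the path from \cref{lem:CPLlocaldistance}) is crucial.
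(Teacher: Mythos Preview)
Your proof is correct and follows essentially the same route as the paper: the paper forms $\mathcal{C}^{\phi}_{0}$ by contracting the boundary of $D$ to a single vertex $w$ (which becomes adjacent to each $u(v)$ with $v\in Y$) and deleting $\mathcal{E}_{\mathcal{C}}\setminus Y$, yielding a planar graph of radius at most $t-1$, then applies \cref{PlanarBoundedRadius} and \cref{lem:CPL} exactly as you do. Your graph $H$ is isomorphic to the paper's $\mathcal{C}^{\phi}_{0}$ via $z\leftrightarrow w$, and your explicit use of the fact that endpoints are leaves (to shave the last edge of the path from \cref{lem:CPLlocaldistance}) makes transparent a step the paper leaves implicit.
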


\begin{proof} 

Recall that $\mathcal{E}_{\mathcal{C}}$ is the set of endpoints of curves in $\mathcal{C}$. Let $Y \subseteq \mathcal{E}_{\mathcal{C}}$ be the set of such endpoints that lie on the boundary of $D$. By definition of outerstring graphs, each curve in $\mathcal{C}$ has at least one of its endpoints in $Y$.

Recall that the coloured planarisation $\mathcal{C}^{\phi}$ is obtained from the planarisation $\mathcal{C}'$ of $\mathcal{C}$ by contracting certain edges. We therefore may assume that the vertices and the edges of $\mathcal{C}$ lie outside of $D$ except for the vertices that belong to $Y$. Let $\mathcal{C}^{\phi}_{0}$ be the graph equipped with the drawing obtained from $\mathcal{C}^{\phi}$ by contracting the boundary of $D$ into a single point and deleting the vertices of $\mathcal{E}_{\mathcal{C}} \setminus Y$ and the edges incident to $\mathcal{E}_{\mathcal{C}} \setminus Y$. So in graph-theoretic sense, the vertices of $Y$ are identified to a single vertex $w$.

By \cref{lem:CPLlocaldistance}, for each $x \in V(\mathcal{C}^{\phi}) \setminus \mathcal{E}_{\mathcal{C}}$, there exists $v \in Y$ such that $\dist_{\mathcal{C}^{\phi}}(x, v) \leqslant t - 1$. Therefore for each $y \in V(\mathcal{C}^{\phi}_{0})$, we have $\dist_{\mathcal{C}^{\phi}_{0}}(y, w) \leqslant t - 1$. So $\mathcal{C}^{\phi}_{0}$ has radius at most $t - 1$. By \cref{PlanarBoundedRadius}, $\tw(\mathcal{C}^{\phi}_{0}) \leqslant 3t - 2$. By \cref{lem:CPL}, $G$ is a minor of $(\mathcal{C}^{\phi} - \mathcal{E}_{\mathcal{C}}) \boxtimes K_{d + 1}$. Hence $G$ is a minor of $\mathcal{C}^{\phi}_{0} \boxtimes K_{d + 1}$. Thus $\tw(G) \leqslant (3t - 1)(d + 1) - 1$.
\end{proof}

To prove our result about the treewidth of $(g, c)$-outerstring graphs, \cref{outerstringsurfaces}, we need the following extension of \cref{PlanarBoundedRadius}.

\begin{lem} \label{lem:radiuskverticesgenus} Let $G$ be a graph with Euler genus $g$. 
Suppose that there exist $c$ vertices $w_{1}, \dots, w_{c} \in V(G)$ such that for every $v \in V(G)$ there exists $i \in \{1, \dots, c\}$ such that $\dist_{G}(v, w_{i}) \leqslant r$. Then $\tw(G) \leqslant (2r + 1)c(2g + 3) - 1$.
\end{lem}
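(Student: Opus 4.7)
The plan is to construct a tree decomposition of $G$ of bounded width by exploiting a multi-source BFS layering and the Dujmovic-Morin-Wood layered treewidth theorem used in the proof of \cref{lem:PS} (every graph of Euler genus $g$ has layered treewidth at most $2g+3$ with respect to a BFS layering from any single vertex).

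First, I would define the multi-source BFS layering $(V_0, V_1, \dots, V_r)$ of $V(G)$ by $V_j := \{v \in V(G) : \min_{i} \dist_G(v, w_i) = j\}$, so $V_0 = \{w_1, \dots, w_c\}$. This is a valid layering of $G$ because, for every edge $uv \in E(G)$, the values $\min_i \dist_G(u, w_i)$ and $\min_i \dist_G(v, w_i)$ differ by at most $1$.

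Second, the heart of the proof is to reduce to the single-source case. I would form $G^+$ from $G$ by adding a new vertex $w$ and edges $ww_i$ for each $i \in \{1, \dots, c\}$. Then $G^+$ has radius at most $r+1$ from $w$, so the single-source BFS from $w$ in $G^+$ has at most $r+2$ layers $\{w\}, V_0, V_1, \dots, V_r$. I would bound the Euler genus of $G^+$ by $g + (c-1)$: starting from a cellular embedding of $G$ in a surface of Euler genus $g$, place $w$ in some face and add at most $c-1$ crosscaps to route the edges $ww_i$ to those centers that lie in other faces. Applying the layered treewidth theorem to $G^+$ with the BFS layering from $w$ then yields $\tw(G^+) \leq (r+2)(2(g+c-1)+3) - 1$, and since $G$ is the subgraph of $G^+$ obtained by deleting $w$, this bounds $\tw(G)$.

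The main obstacle is matching the precise bound $(2r + 1)c(2g + 3) - 1$ stated in the lemma. The super-source reduction above achieves a bound of the form $(r+2)(2g+2c+1) - 1$, which is asymptotically tighter than the target in most regimes but does not structurally match the claimed expression. To match the stated bound exactly, a more direct construction is needed, producing a tree decomposition of layered width at most $c(2g+3)$ directly with respect to the multi-source layering itself. I expect this to be achieved by combining $c$ BFS tree structures rooted at $w_1, \dots, w_c$ (each contributing the layered width $2g+3$ guaranteed by a single-source DMW argument), with the factor $(2r+1)$ rather than $(r+1)$ emerging from a "doubling" when the $c$ individual tree decompositions are stitched together along boundaries where the closest center changes.
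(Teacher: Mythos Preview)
The paper's argument is much simpler than your route and avoids any super-source or genus-modification entirely. The Dujmovi\'c--Morin--Wood theorem gives $\ltw(G)\le 2g+3$ directly for $G$: there exists \emph{some} layering $(V_0,\dots,V_s)$ of $G$ and a tree decomposition whose bags meet each layer in at most $2g+3$ vertices. The missing observation is that \emph{any} layering of $G$---not just a BFS one---has at most $(2r+1)c$ non-empty layers under the hypothesis. Indeed, each centre $w_i$ sits in some layer $V_{a_i}$, and since layer indices change by at most one along each edge, every vertex within graph-distance $r$ of $w_i$ lies in a layer with index in $[a_i-r,\,a_i+r]$. Thus all non-empty layers are covered by $c$ intervals of length $2r+1$, and each bag has size at most $(2r+1)c(2g+3)$. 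That is the whole proof (stated in the paper as \cref{lem:radiuskverticesltw}).

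Your super-source construction would yield \emph{a} bound, but it is unnecessary and does not hit the target. The genus estimate for $G^+$ is also shaky: routing an edge between two distinct faces generally costs a handle (Euler genus $+2$), not a single crosscap, so $g+(c-1)$ is not clearly correct. More importantly, your final speculation---stitching together $c$ separate BFS decompositions, with the factor $2r+1$ arising from some ``doubling'' at the seams---is on the wrong track. No stitching is needed; the factor $2r+1$ (rather than $r+1$) appears simply because in an arbitrary layering a centre $w_i$ need not occupy an extreme layer, so vertices near it can spread $r$ layers in both directions.
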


To prove \cref{lem:radiuskverticesgenus}, we use the concept of layered treewidth (see \cref{preliminaries} for a definition). \citet{DMW17} proved that every graph with Euler genus $g$ has layered treewidth at most $2g + 3$. Thus \cref{lem:radiuskverticesgenus} follows from \cref{lem:radiuskverticesltw}, which extends a result of \citet[Lemma~6]{DMW17}.

\begin{lem} \label{lem:radiuskverticesltw} Let $G$ be a graph. Suppose that there exist $c$ vertices $w_{1}, \dots, w_{c} \in V(G)$ such that for every $v \in V(G)$ there exists $i \in \{1, \dots, c\}$ such that $\dist_{G}(v, w_{i}) \leqslant r$. Then $\tw(G) \leqslant (2r + 1)c \cdot \ltw(G) - 1$.
\end{lem}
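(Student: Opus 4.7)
The plan is to lift the single-center argument of \citet[Lemma~6]{DMW17} essentially verbatim, observing that the proof never needed the layers to be defined by a BFS from one vertex; it only used the fact that the vertex set is covered by a bounded number of layers. With $c$ centers and radius $r$, one still gets a bounded number of ``relevant'' layers, namely $(2r+1)c$ of them.

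First, I would fix a tree decomposition $(T, B)$ of $G$ together with a layering $(V_0, V_1, \dots, V_s)$ of $G$ that jointly realise the layered treewidth, so that $|B(t) \cap V_j| \leqslant \ell := \ltw(G)$ for every $t \in V(T)$ and every $j$. This pair exists by the definition of $\ltw$.

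Next, for each $i \in \{1, \dots, c\}$ let $k_i$ be the index with $w_i \in V_{k_i}$. Because the layering is proper in the sense that every edge of $G$ has endpoints in layers whose indices differ by at most $1$, any walk of length at most $r$ from $w_i$ ends in a layer with index in $\{k_i - r, \dots, k_i + r\}$. The hypothesis of the lemma therefore forces every $v \in V(G)$ to lie in some $V_j$ with $j \in L := \bigcup_{i=1}^{c}\{k_i - r, \dots, k_i + r\} \cap \{0, \dots, s\}$, and clearly $|L| \leqslant (2r+1)c$. Thus $V(G) = \bigcup_{j \in L} V_j$.

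Finally, I would bound the size of an arbitrary bag: for any $t \in V(T)$,
\[
|B(t)| \;=\; \Big|\bigcup_{j \in L}\bigl(B(t) \cap V_j\bigr)\Big| \;\leqslant\; \sum_{j \in L} |B(t) \cap V_j| \;\leqslant\; |L|\cdot \ell \;\leqslant\; (2r+1)c \cdot \ltw(G),
\]
so $(T,B)$ certifies $\tw(G) \leqslant (2r+1)c \cdot \ltw(G) - 1$, as claimed. There is no real obstacle: the whole argument is a one-line counting step once the right layering is in place, and the only ingredient beyond the definitions is the elementary observation that distance in $G$ dominates the layer-index difference.
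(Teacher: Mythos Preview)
Your proof is correct and takes essentially the same approach as the paper. The paper's own proof is simply a terser version of yours: it asserts in one line that the layering has at most $(2r+1)c$ (nonempty) layers, whereas you spell out why by locating each $w_i$ in its layer $V_{k_i}$ and observing that distance at most $r$ confines vertices to indices in $\{k_i-r,\dots,k_i+r\}$.
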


\begin{proof} Let $\ell := \ltw(G)$. So there exists a layering $(V_{0}, V_{1} \dots, V_{s})$ of $G$ and a tree decomposition $(T, B)$ of $G$ such that each bag contains at most $\ell$ vertices in each layer $V_{i}$. By assumption, $\mathcal{L}$ has at most $(2r + 1)c$ layers. Hence each bag of $(T, B)$ contains at most $(2r + 1)c\ell$ vertices. Therefore the width of $(T, B)$ is at most $(2r + 1)c\ell - 1$. Thus $\tw(G) \leqslant (2r + 1)c\ell - 1$.
\end{proof}

The following result, which implies \cref{outerstringsurfaces}, works in a setting that is slightly more general the `$(t, d)$-degenerate' assumption of \cref{outerstringsurfaces}.

\begin{thm} \label{thm:genusouterstring} Let $G$ be a $(g, c)$-outerstring graph. Suppose that there exists a $(g, c)$-outerstring diagram $(\Sigma, \mathcal{C}, \{D_{1}, \dots, D_{c}\})$ of $G$ and an ordered $t$-colouring $\phi$ of $\mathcal{C}$ such that for any curve $\gamma \in \mathcal{C}$ and for any fragment $\alpha$ of $\gamma$, there are at most $d$ curves of colour greater than $\gamma$ that cross $\alpha$. Then $\tw(G) \leqslant (2t - 1)c(2g + 3)(d + 1) - 1$.
\end{thm}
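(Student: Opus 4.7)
The plan is to mimic the proof of \cref{thm:planarouterstring}, but replace the application of \cref{PlanarBoundedRadius} (planar graphs with bounded radius) with the multi-disk/multi-centre analogue \cref{lem:radiuskverticesgenus}. In other words, each disk $D_i$ contributes one ``root'' vertex in a contracted coloured planarisation, and the bounded number of colours in $\phi$ forces every vertex to be within distance $t - 1$ of some root.

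First, I would apply \cref{lem:finitesurfaces} (if needed) to assume $\mathcal{C}$ has finitely many crossings, and otherwise set up the coloured planarisation $\mathcal{C}^{\phi}$ inside $\Sigma$ exactly as in \cref{section:colouredplanarisations}. Let $Y \subseteq \mathcal{E}_{\mathcal{C}}$ be the set of endpoints lying on $\partial D_1 \cup \cdots \cup \partial D_c$; by the definition of a $(g,c)$-outerstring diagram every curve in $\mathcal{C}$ has at least one endpoint in $Y$. As in the proof of \cref{thm:planarouterstring}, I may assume that no vertex or edge of $\mathcal{C}^\phi$ lies in the interior of any $D_i$ except for the vertices in $Y$ on $\partial D_i$.

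Next, form $\mathcal{C}^{\phi}_{0}$ from $\mathcal{C}^{\phi}$ by, for each $i \in \{1,\dots,c\}$, contracting $\partial D_i$ to a single point $w_i$, and then deleting the vertices of $\mathcal{E}_\mathcal{C} \setminus Y$ together with their incident edges. Since each $D_i$ is a disk drawn in $\Sigma$, these contractions produce a graph that still embeds in $\Sigma$, so $\mathcal{C}^{\phi}_{0}$ has Euler genus at most $g$. By \cref{lem:CPLlocaldistance} applied with this set $Y$, every $x \in V(\mathcal{C}^{\phi}) \setminus \mathcal{E}_{\mathcal{C}}$ is at distance at most $t-1$ from some vertex of $Y$ in $\mathcal{C}^\phi$; after contractions this says that every vertex of $\mathcal{C}^{\phi}_{0}$ is at distance at most $t-1$ from some $w_i$. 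Therefore \cref{lem:radiuskverticesgenus} (with $r = t - 1$ and the $c$ centres $w_1,\dots,w_c$) yields
\begin{equation*}
\tw(\mathcal{C}^{\phi}_{0}) \leqslant (2(t-1)+1)\,c\,(2g+3) - 1 = (2t-1)c(2g+3) - 1.
\end{equation*}

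Finally, \cref{lem:CPL} gives a model of $G$ in $(\mathcal{C}^{\phi} - \mathcal{E}_{\mathcal{C}}) \boxtimes K_{d+1}$, and because $\mathcal{C}^{\phi}_{0}$ is obtained from $\mathcal{C}^{\phi} - \mathcal{E}_\mathcal{C}$ by identifications on $Y$ and deletions, $G$ is also a minor of $\mathcal{C}^{\phi}_{0} \boxtimes K_{d+1}$. Using the standard inequality $\tw(H \boxtimes K_n) \leqslant (\tw(H)+1)n - 1$, this gives the claimed $\tw(G) \leqslant (2t-1)c(2g+3)(d+1) - 1$. The only mildly delicate step I expect is verifying that the contractions and deletions used to build $\mathcal{C}^{\phi}_{0}$ are valid: namely, that $\partial D_i$ is a connected subset of $\mathcal{C}^{\phi}$ so that contracting it is a genuine graph minor operation, and that no curve lies inside any $D_i$ so that the contractions do not identify vertices of different disks. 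Both points follow from the $(g,c)$-outerstring diagram definition together with the standing assumption that curves are drawn outside the disks $D_i$.
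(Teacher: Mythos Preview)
Your proposal is correct and follows essentially the same argument as the paper: form $\mathcal{C}^{\phi}_{0}$ by contracting each disk boundary to a vertex $w_i$ and deleting $\mathcal{E}_{\mathcal{C}}\setminus Y$, use \cref{lem:CPLlocaldistance} to place every vertex within distance $t-1$ of some $w_i$, apply \cref{lem:radiuskverticesgenus}, and finish via \cref{lem:CPL} and the treewidth bound for strong products with $K_{d+1}$. The only minor imprecision is your description of how $\mathcal{C}^{\phi}_{0}$ relates to $\mathcal{C}^{\phi}-\mathcal{E}_{\mathcal{C}}$ (the latter is simply a subgraph of the former, which is all you need), and the contraction of $\partial D_i$ is a topological operation in $\Sigma$ rather than a graph-minor contraction within $\mathcal{C}^{\phi}$---but the paper treats this point the same way.
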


\begin{proof} Recall that $\mathcal{E}_{\mathcal{C}}$ is the set of endpoints of curves in $\mathcal{C}$. For each $i \in \{1, \dots, c\}$, let $Y_{i} \subseteq \mathcal{E}_{\mathcal{C}}$ be the set of such endpoints that lie on the boundary of $D_{i}$. Since the disks $D_{1}, \dots, D_{c}$ are disjoint, the sets $Y_{1}, \dots, Y_{c}$ are disjoint. Let $Y := Y_{1} \cup Y_{2} \cup \dots \cup Y_{c}$. By definition of $(g, c)$-outerstring graphs, each curve in $\mathcal{C}$ has at least one of its endpoints in $Y$.

Recall that the coloured planarisation $\mathcal{C}^{\phi}$ is obtained from the planarisation $\mathcal{C}'$ of $\mathcal{C}$ by contracting certain edges. We therefore may assume that the vertices and the edges of $\mathcal{C}$ lie outside of $D_{1} \cup D_{2} \dots \cup D_{c}$ except for the vertices that belong to $Y$. Let $\mathcal{C}^{\phi}_{0}$ be the graph equipped with the drawing obtained from $\mathcal{C}^{\phi}$ by contracting the boundary of each of $D_{i}$, where $i \in \{1, \dots, c\}$, into a single point and deleting the vertices of $\mathcal{E}_{\mathcal{C}} \setminus Y$ and the edges incident to $\mathcal{E}_{\mathcal{C}} \setminus Y$. So in graph-theoretic sense, for each $i \in \{1, \dots, c\}$, the vertices of $Y_{i}$ are identified to a single vertex $w_{i}$.

By \cref{lem:CPLlocaldistance}, for each $x \in V(\mathcal{C}^{\phi}) \setminus \mathcal{E}_{\mathcal{C}}$, there exists $v \in Y$ such that $\dist_{\mathcal{C}^{\phi}}(x, v) \leqslant t - 1$. Therefore for each $y \in V(\mathcal{C}^{\phi}_{0})$, there exists $i \in \{1, \dots, c\}$ such that $\dist_{\mathcal{C}^{\phi}_{0}}(y, w_{i}) \leqslant t - 1$. By \cref{lem:radiuskverticesgenus}, $\tw(\mathcal{C}^{\phi}_{0}) \leqslant (2t - 1)c(2g + 3) - 1$. By \cref{lem:CPL}, $G$ is a minor of $(\mathcal{C}^{\phi} - \mathcal{E}_{\mathcal{C}}) \boxtimes K_{d + 1}$. Hence $G$ is a minor of $\mathcal{C}^{\phi}_{0} \boxtimes K_{d + 1}$. Thus $\tw(G) \leqslant (2t - 1)c(2g + 3)(d + 1) - 1$.
\end{proof}

\section{String Graphs in the Plane} \label{Section:localrealisations}

This section proves \cref{main:localised}, which says that string graphs in the plane admit localised string representations.

We need the following definitions. Let $H$ be a graph and $R \subseteq \{\{e, f\} : e,f \in E(H)\}$ be a set of pairs of edges of $H$. A drawing $D$ of $H$ in the plane is a \defn{weak realisation}~\citep{Krat-JCTB91,KM91} of $(H, R)$ if every pair of crossing edges of $H$ belongs to $R$. In other words, only the pairs of edges that are in $R$ are allowed to cross in $D$. Note that the pairs of edges specified in $R$ do not \textit{have} to cross in a weak realisation of~$(H, R)$. We say that $(H, R)$ is \defn{weakly realisable} if it has a weak realisation. It is well-known that string graphs and weak realisability are closely related~\citep{Krat-JCTB91,KM91,SS-JCSS04}.

To prove \cref{Intro:SS}, \citet{SS-JCSS04} used the following result (see also the book of \citet[Lemma~9.2]{Schaefer18}). We use \cref{lem:weakrealisation} to prove \cref{main:localised}.

\begin{thm} [\citep{SS-JCSS04}]  \label{lem:weakrealisation} Let $H$ be a graph. Let $R \subseteq \{\{e, f\} : e,f \in E(H)\}$ be a set of pairs of edges of $H$ such that $(H, R)$ is weakly realisable. For each edge $e \in E(H)$, let $\delta_{e}$ be the number of pairs in $R$ that contain $e$. Then there exists a weak realisation $D$ of $(H, R)$ such that every edge $e \in E(H)$ is involved in at most $2^{\delta_{e}} - 1$ crossing points.
\end{thm}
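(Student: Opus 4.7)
The plan is to follow the strategy of \citet{SS-JCSS04} and \citet[Theorem~9.17]{Schaefer18}, combining a minimality argument with a pigeonhole on parity vectors of prefixes of the crossing sequence along each edge. First, among all weak realisations of $(H, R)$, pick one, call it $D$, that minimises the total number of crossings. Such a minimum is attained: a weak realisation exists by hypothesis, and after a standard perturbation every weak realisation has finitely many crossings. I claim that in any such minimal $D$, every edge $e \in E(H)$ is involved in at most $2^{\delta_e} - 1$ crossings.

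Fix $e \in E(H)$, and let $A_e := \{f \in E(H) \setminus \{e\} : \{e, f\} \in R\}$, so $|A_e| = \delta_e$. Parametrise $e$ from one endpoint $u$ to the other endpoint $v$, and let $p_1, \dots, p_N$ be the crossings on $e$ in this order, with $p_\ell \in e \cap f_\ell$ for some $f_\ell \in A_e$. For $0 \leq i \leq N$, define the prefix parity vector $\mathbf{v}_i \in \mathbb{Z}_2^{A_e}$ by $(\mathbf{v}_i)_f := |\{\ell \leq i : f_\ell = f\}| \bmod 2$. If $N \geq 2^{\delta_e}$, then among the $N + 1$ vectors $\mathbf{v}_0, \dots, \mathbf{v}_N$ lying in a set of size $2^{\delta_e}$, pigeonhole gives indices $0 \leq i < j \leq N$ with $\mathbf{v}_i = \mathbf{v}_j$. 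The sub-arc $\alpha$ of $e$ from $p_i$ to $p_j$ then crosses every edge of $H - e$ an even number of times. The next step is a redrawing lemma: such an even-parity sub-arc $\alpha$ can be replaced by a sub-arc $\alpha'$ with the same endpoints $p_i, p_j$ that crosses strictly fewer edges in total, yielding a weak realisation of $(H, R)$ with fewer crossings and contradicting the minimality of $D$. I would prove this via a Hanani--Tutte-style topological argument in the plane: because $\alpha$ crosses each edge of $H - e$ evenly, there is a curve $\beta$ between $p_i$ and $p_j$ in the complement of $V(H)$ that is homotopic to $\alpha$ but avoids every edge of $H - e$, and then $\alpha'$ is obtained by perturbing $\beta$ into general position.

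The main obstacle is to carry out this redrawing step rigorously so that the new drawing remains a weak realisation of $(H, R)$. The subtlety is that a naive perturbation of $\beta$ might create crossings between pairs of edges not in $R$, destroying weak realisability; we must confine all new crossings to pairs already allowed by $R$, and in fact strictly reduce the crossing count. This is where the approach of \citet{SS-JCSS04} leans on specific features of the plane, notably circular (Möbius) inversions, to implement the parity-based redrawing concretely while controlling which pairs of edges end up crossing. Verifying these geometric details, while standard in spirit, is the technical heart of the argument and is the reason the proof is specific to the plane rather than an arbitrary surface.
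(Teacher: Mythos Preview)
The paper does not prove this theorem; it quotes it from \citet{SS-JCSS04} (with the explicit bound taken from \citet{Schaefer18}) and uses it as a black box in the proof of \cref{main:localised}. Your high-level strategy---take a crossing-minimal weak realisation, run a parity pigeonhole on the prefix vectors along each edge, and reroute an even-parity sub-arc to reduce the crossing count---is exactly the approach of those references.

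There is, however, a genuine gap in your rerouting step. You claim that because the sub-arc $\alpha$ crosses every edge of $H - e$ an even number of times, there is an arc $\beta$ with the same endpoints avoiding every edge of $H - e$. This is false in general: the even-parity condition on individual edges of $H - e$ does \emph{not} force the endpoints of $\alpha$ into the same face of the drawing of $H - e$. For a concrete obstruction, let $f, g \in E(H - e)$ be drawn so that they cross twice, enclosing a lens-shaped region; an arc starting inside the lens can exit across the $f$-part of the lens boundary, detour around an endpoint of $g$, and then cross a different piece of $f$ once more, landing outside the lens with two crossings on $f$ and none on $g$. Every arc between those two endpoints must meet $f \cup g$, so no $\beta$ as you describe exists. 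Crossing parity with an arc (as opposed to a cycle) is not a homotopy invariant in the punctured plane, so the Hanani--Tutte-style sketch you give does not deliver the stated conclusion. What the argument in \citet{SS-JCSS04} actually establishes is the weaker claim that such an $\alpha$ can be replaced by an arc with \emph{strictly fewer} crossings, all of them with edges already paired with $e$ in $R$; this suffices to contradict minimality, and it is precisely here that the inversion-based redrawing you mention in your final paragraph does the real work. A minor additional point: the endpoints of the sub-arc should be taken strictly between consecutive crossings (not at $p_i$ and $p_j$ themselves), so that they lie off every other edge.
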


We now prove \cref{main:localised}. Our proof is a refinement of the argument of \citet[Theorem~9.17]{Schaefer18} of the proof of \cref{Intro:SS}.

\intromainlocalised*

\begin{proof}

By \cref{lem:finitesurfaces}, there exists a string representation $\mathcal{C}_{0} = \{\gamma_{u} : u \in V(G)\}$ of $G$ with a finite number of crossing points, where for each $u \in V(G)$ the curve $\gamma_{u} \in \mathcal{C}_{0}$ represents $u$.  For each edge $uv \in E(G)$, fix a crossing point $x_{uv} \in \gamma_{u} \cap \gamma_{v}$, and let $X := \{x_{uv} : uv \in E(G)\}$ (see \cref{stringexample}). Hence for every $u \in V(G)$, there are exactly $d_{G}(u)$ points of $X$ on $\gamma_{u}$. Let $H$ be a graph and $D_{0}$ be a drawing of $H$ defined as follows. As illustrated in \cref{drawingexample}, let $V(H) := E(G)$ and let each vertex $uv \in E(G) = V(H)$ of $H$ be associated with $x_{uv}$ in the drawing $D_{0}$. Let $E(H)$ be the set of pairs $\{ua, ub\}$ such that $ua, ub \in E(G)$, $a \neq b$, and there are no points of $X$ on the subcurve of $\gamma_{u}$ between $x_{ua}$ and $x_{ub}$ (and $D_{0}$ associates the edge $\{ua, ub\}$ with this subcurve). In other words, for every $u \in V(G)$ such that $d_{G}(u) \geqslant 1$, the curve $\gamma_{u}$ is divided to $d_{G}(u) - 1$ edges of $H$ in the drawing $D_{0}$ by crossing points  $x_{uv_{1}}$, $x_{uv_{2}}$, $\dots$, $x_{uv_{d_{G}(u)}}$, where these points occur along $\gamma_{u}$ in this order. Let $\sigma_{u} := (v_{1}, \dots, v_{d_{G}(u)})$ be the order of the neighbours of $u$ corresponding to the order of these crossing points. Since the total number of crossing points of $\mathcal{C}_{0}$ is finite, $H$ is a finite graph and $D_{0}$ is indeed a drawing of~$H$ in the plane.

\begin{figure}[h]
    \begin{subfigure}[t]{0.5\textwidth}
    \centering
        \scalebox{1.3}{\includegraphics{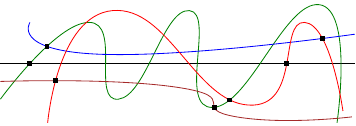}}
        \subcaption{$\mathcal{C}_{0}$ with the points of $X$}
        \label{stringexample}
    \end{subfigure}
    \begin{subfigure}[t]{0.55\textwidth}
    \centering
        \scalebox{1.3}{\includegraphics{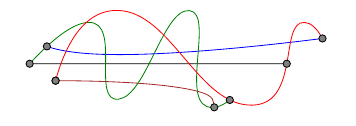}}
        \subcaption{The drawing $D_{0}$ of the graph $H$}
        \label{drawingexample}
    \end{subfigure}
    \caption{Proof of \cref{main:localised}. (a) A string representation $\mathcal{C}_{0}$. The points of $X$ are marked as black squares. For every two crossing curves, exactly one of their common crossing points is in $X$. (b) The drawing $D_{0}$ of the graph $H$ obtained from $\mathcal{C}_{0}$ and $X$. The vertices of $H$ are grey.}
    \label{example}
\end{figure}

Let $R$ be the set of pairs of edges of $H$ that cross in $D_{0}$. So $D_{0}$ is a weak realisation of~$(H, R)$. Since every curve $\gamma_{u} \in \mathcal{C}_{0}$ is non-self-intersecting, $\{\{ua, ub\}, \{uc, ud\}\} \notin R$ for every $ua, ub, uc, ud \in E(G) = V(H)$. By construction, every edge $\{ua, ub\} \in E(H)$ is contained in at most $d_{G}(u)$ pairs of $R$. By \cref{lem:weakrealisation}, there exists a weak realisation $D$ of $(H, R)$ such that every edge $\{ua, ub\}$ of $E(H)$ is involved in at most $2^{d_{G}(u)} - 1$ crossing points. For each $uv \in E(G) = V(H)$, let $y_{uv}$ be the point in the plane associated with~$uv$ in $D$. For each $u \in V(G)$, let $\alpha_{u}$ be the curve in the plane obtained by concatenation of $d_{G}(u) - 1$ curves $y_{uv_{1}}y_{uv_{2}}$, $y_{uv_{2}}y_{uv_{3}}$, $\dots$, $y_{uv_{d_{G}(u) - 1}}y_{uv_{d_{G}(u)}}$ of the drawing $D$, where $(v_{1}, \dots, v_{d_{G}(u)}) = \sigma_{u}$. Let $\mathcal{C} := \{\alpha_{u} : u \in V(G)\}$. Since $\{\{ua, ub\}, \{uc, ud\}\} \notin R$ for every $ua, ub, uc, ud \in E(G) = V(H)$, every curve $\alpha_{u}$ of $\mathcal{C}$ is non-self-intersecting. For each $uv \in E(G)$, the curves $\alpha_{u}$ and $\alpha_{v}$ have a common crossing point $y_{u, v}$. Since $D$ is a weak realisation of $(H, R)$, for each $u, v \in V(G)$ such that $uv \notin E(G)$, we have $\alpha_{u} \cap \alpha_{v} = \emptyset$. Therefore $\mathcal{C}$ is a string representation of $G$. For each $u \in V(G)$, there are two types of crossing points of $\alpha_{u}$ with other curves in $\mathcal{C}$: (i) the crossing points of $y_{uv_{i}}y_{uv_{i + 1}}$ with other edges in $D$, where $i \in \{1, \dots, d_{G}(u) - 1\}$ and (ii) $d_{G}(u)$ crossing points $y_{uv_{1}}, y_{uv_{2}}, \dots y_{uv_{d_{G}(u)}}$. By the choice of $D$, for each $i \in \{1, \dots, d_{G}(u) - 1\}$, there are at most $2^{d_{G}(u)} - 1$ crossing points of $y_{uv_{i}}y_{uv_{i + 1}}$ with other edges in $D$. Therefore the curve $\alpha_{u}$ is involved in at most $(2^{d_{G}(u)} - 1)(d_{G}(u) - 1) + d_{G}(u) = 2^{d_{G}(u)}(d_{G}(u) - 1) + 1$ crossing points, as desired.
\end{proof}

\section{Intersection Graphs of Convex Sets} \label{Section:intersectiongraphs}

Our bounds on the row treewidth of string graphs with bounded maximum degree are exponential for string graphs in the plane (\cref{thm:PSmaxdegree}) and even larger for string graphs on surfaces (\cref{thm:PSspecificbounds}). We conjecture that these bounds can be improved to a polynomial function.

\begin{conj} \label{conj:polynomial} Every string graph in a surface with Euler genus $g$ with maximum degree at most $\Delta$ has row treewidth at most $f(\Delta, g)$ for some polynomial function $f$.
\end{conj}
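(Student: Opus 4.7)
The obstruction to a polynomial bound via the approach of \cref{thm:PSspecificbounds} is twofold. First, the distance estimate of \cref{lem:CPLdistance} is exponential in the number of colours~$t$, forcing the shallow-minor radius~$r$ in \cref{thm:RTWmain} to be exponential in~$\Delta$. Second, the route through $\delta$-string graphs via \cref{intro:boundeddegree} and \cref{gdStringGPST} cannot succeed either, because as observed just after \cref{intro:boundeddegree}, \cref{thm:KM91} forces any bound of the form ``maximum degree $\Delta$ implies $\delta$-string'' to satisfy $\delta = 2^{\Omega(\Delta)}$. A polynomial-in-$\Delta$ bound must therefore bypass both routes and argue directly from the geometry of a string representation.

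My plan is to use a BFS layering of $G$ together with \cref{outerstringsurfaces}. Fix any string representation $\mathcal{C} = \{\gamma_v : v \in V(G)\}$ of $G$ in $\Sigma$, a root $v_0 \in V(G)$, and the BFS layers $V_0 = \{v_0\}, V_1, V_2, \dots$ of $G$ from $v_0$. For each $v \in V_i$ with $i \geq 1$ fix a parent $p(v) \in V_{i-1}$ adjacent to $v$ in $G$ and a single crossing point $x_v \in \gamma_v \cap \gamma_{p(v)}$. Since $d_G(w) \leq \Delta$, each curve $\gamma_w$ with $w \in V_{i-1}$ carries at most $\Delta$ of these anchor points. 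The goal is to perform a topological surgery on $\bigcup_{w \in V_{i-1}} \gamma_w$ that ``shrinks'' each $\gamma_w$ to a small disk containing only its anchor points, without creating any new crossings among the curves of $V_{\geq i}$. If this can be achieved, then inside each face $F$ of the surface minus the surgered region the curves of $V_i$ give a $(g_F, c_F)$-outerstring diagram with $g_F \leq g$ and $c_F \leq \poly(\Delta)$, and so by \cref{outerstringsurfaces} the corresponding piece of $G[V_i]$ has treewidth at most $\poly(\Delta, g)$. Patching these bounds along the BFS layering would give a polynomial layered-width decomposition of $G$ and, combined with the bounded-degree hypothesis, a product decomposition $G \subseteq H \boxtimes P$ with $\tw(H) \leq \poly(\Delta, g)$.

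The main obstacle is the surgery step. Any retraction of $\bigcup_{w \in V_{i-1}} \gamma_w$ into a topologically bounded shape must preserve every crossing between $V_{i-1}$ and $V_{\geq i}$, and must not produce new crossings among curves of $V_{\geq i}$ themselves. In addition, a priori the number $c_F$ of boundary components of a single face $F$ grows with $|V_{<i}|$ rather than with $\Delta$, so controlling $c_F$ requires genuinely new topological input beyond what is used in \cref{section:colouredplanarisations}. Resolving this step — essentially showing that bounded-degree string graphs on a surface admit a layer-wise compression into outerstring pieces of polynomial complexity — is where a full proof of \cref{conj:polynomial} would have to break new ground, and is also where the convex-set assumption of \cref{Section:intersectiongraphs} provides the rigidity that substitutes for such a compression.
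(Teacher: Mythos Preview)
The statement you are attempting is \cref{conj:polynomial}, which is a \emph{conjecture} in the paper, not a theorem. The paper does not prove it; it only offers \cref{thm:intersectiongraphs} (the convex-set case) as supporting evidence. So there is no ``paper's own proof'' to compare against, and you are in effect proposing an attack on an open problem.

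Your write-up is honest about this: you call it a proof plan, and you yourself flag the surgery step as the point where ``a full proof of \cref{conj:polynomial} would have to break new ground.'' That assessment is accurate. The difficulty you isolate --- controlling the number $c_F$ of boundary components of a face after collapsing $\bigcup_{w\in V_{i-1}}\gamma_w$, so that \cref{outerstringsurfaces} can be invoked with $c$ polynomial in $\Delta$ --- is real, and nothing in the paper's toolbox handles it. Indeed, the curves of $V_{<i}$ can carve $\Sigma$ into faces with arbitrarily many boundary components even when $\Delta$ is small, so without a genuinely new compression argument this route stalls exactly where you say it does.

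One further gap worth noting: even if the surgery produced a polynomial bound on the \emph{layered} treewidth of $G$, your concluding sentence (``combined with the bounded-degree hypothesis, a product decomposition $G\subseteq H\boxtimes P$'') is not automatic. Bounded layered treewidth does not in general imply bounded row treewidth, and the paper's route to row treewidth goes through weak shallow minors and \cref{thm:RTWmain}, not through layered treewidth alone. So a complete argument along your lines would also need to recover a shallow-minor structure, or some other bridge, from the layer-wise outerstring decomposition.
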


To provide evidence in support of \cref{conj:polynomial}, we show that it holds for intersection graphs of convex sets in the plane.

\begin{thm} \label{thm:intersectiongraphs} Every intersection graph $G$ of convex sets in the plane with maximum degree at most $\Delta$ is contained in $H \boxtimes P \boxtimes K_{6(2\Delta^{2} + 1)^{2}}$ for some graph $H$ with treewidth at most $\binom{2\Delta^{2} + 4}{3} - 1$ and for some path $P$, and thus $G$ has row treewidth at most $6(2\Delta^{2} + 1)^{2}\binom{2\Delta^{2} + 4}{3} - 1$.
\end{thm}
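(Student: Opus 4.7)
The plan is to prove \cref{thm:intersectiongraphs} by reducing it to \cref{gdStringGPST}. Specifically, I would establish the intermediate claim that every intersection graph $G$ of convex sets in the plane with maximum degree at most $\Delta$ is a $(2\Delta^2)$-string graph; that is, $G$ admits a string representation in the plane in which every curve is involved in at most $2\Delta^2$ crossing points. Granted this, applying \cref{gdStringGPST} with $g = 0$ and $\delta = 2\Delta^2$ gives containment in $H \boxtimes P \boxtimes K_{6(2\Delta^2+1)^2}$ with $\tw(H) \leqslant \binom{2\Delta^2 + 4}{3} - 1$, and hence $\rtw(G) \leqslant 6(2\Delta^2+1)^2\binom{2\Delta^2+4}{3} - 1$, matching the statement.

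To construct the string representation from a convex representation $\{C_v : v \in V(G)\}$ of $G$, for each edge $uv \in E(G)$ I would pick a point $q_{uv} \in C_u \cap C_v$ in general position. For each vertex $v$ I would fix an ordering $N_G(v) = \{u_1, \ldots, u_{d_v}\}$ of its neighbours and define $\gamma_v$ to be the straight-line polygonal arc visiting $q_{vu_1}, q_{vu_2}, \ldots, q_{vu_{d_v}}$ in this order. Convexity of $C_v$ guarantees that each straight segment $q_{vu_i}q_{vu_{i+1}}$ lies entirely inside $C_v$, so $\gamma_v \subseteq C_v$. Consequently $\gamma_u \cap \gamma_v = \emptyset$ when $uv \notin E(G)$, while $\gamma_u \cap \gamma_v \ni q_{uv}$ when $uv \in E(G)$. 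A standard generic perturbation of the chosen points and segments ensures that all shared points are proper transverse crossings, producing a valid string representation of $G$ in the plane.

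The crux of the proof is bounding the number of crossing points on each $\gamma_v$ by $2\Delta^2$. The strategy is to show that for every neighbour $u$ of $v$ the curves $\gamma_u$ and $\gamma_v$ cross at most $2\Delta$ times; since $v$ has at most $\Delta$ neighbours, summing gives the required bound. The geometric fact to exploit is that any straight segment intersects a convex set in at most one subsegment, so $\gamma_v$ restricts to at most $d_v - 1 \leqslant \Delta - 1$ straight pieces inside $C_u \cap C_v$, and symmetrically for $\gamma_u$. With a coordinated choice of the neighbour orderings, for instance sorting each $N_G(v)$ by angular order around a chosen interior reference point of $C_v$, one can arrange the pieces of $\gamma_v$ and $\gamma_u$ inside the convex region $C_u \cap C_v$ so that the total number of crossings between them is at most $(d_u - 1) + (d_v - 1) \leqslant 2\Delta - 2$, rather than the naive quadratic bound coming from pairing segments.

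The main obstacle will be precisely this last step: upgrading the trivial $(d_u - 1)(d_v - 1)$ bound on crossings between two polygonal arcs inside a common convex region to a bound that is linear in $d_u + d_v$. This is where genuine convex-geometric input and a globally consistent choice of neighbour orderings are needed, and is where essentially all the content of the argument lies; the remaining reduction to \cref{gdStringGPST} is purely arithmetic.
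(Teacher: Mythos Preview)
Your overall plan differs from the paper's: you aim to show $G$ is a $(2\Delta^2)$-string graph and then invoke \cref{gdStringGPST}, whereas the paper shows $G$ is $2\Delta^2$-\emph{planar} and invokes the $k$-planar product structure theorem of Hickingbotham and Wood. Both reductions would yield the same numerical bounds, but the intermediate claims are genuinely different, and only the paper's is established by a clean argument.

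There is a real gap at your self-identified crux. With your construction each $\gamma_v$ has $d_v - 1$ straight segments inside $C_v$, so the honest pairwise bound is $(d_u-1)(d_v-1)$ crossings between $\gamma_u$ and $\gamma_v$; summing over the at most $\Delta$ neighbours gives $O(\Delta^3)$ crossings on $\gamma_v$, not $2\Delta^2$. The angular-ordering fix does not bring this down to $O(d_u + d_v)$: the orderings for $u$ and $v$ are taken around \emph{different} centres $p_u$ and $p_v$, and nothing prevents the two polylines from interleaving $\Theta(d_u d_v)$ times inside $C_u \cap C_v$ (for instance when both zigzag radially, which angular order certainly permits). Angular ordering does not even guarantee that $\gamma_v$ is simple. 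So your approach proves a polynomial bound on $\rtw(G)$, but not the one in the statement.

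The paper avoids this entirely by abandoning the string representation. It chooses an interior reference point $p_i$ in each set $S_i$ and draws each \emph{edge} $S_iS_j$ as the $1$-bend polyline $\overline{p_i\,q_{ij}\,p_j}$, producing a drawing of $G$ itself. Because $\overline{p_i q_{ij}} \subseteq S_i$ by convexity, any half-edge $\overline{p_a q_{ab}}$ crossing it must lie in some $S_a$ meeting $S_i$; hence $a$ is one of at most $\Delta$ neighbours of $S_i$, and $b$ is one of at most $\Delta$ neighbours of $S_a$, giving at most $\Delta^2$ crossings per half-edge and $2\Delta^2$ per edge. The extra reference points $p_i$ are exactly what make the count per \emph{edge} rather than per \emph{vertex}, which is why the quadratic bound falls out in one line.
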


It is straightforward to show that every intersection graph of convex sets in the plane is a string graph in the plane. Remarkably, \citet*{PRY20} proved that almost all string graphs in the plane are intersection
graphs of convex sets in the plane. Thus \cref{thm:intersectiongraphs} provides substantial evidence for \cref{conj:polynomial}.

Little is known about product structure of intersection graphs of convex sets. \citet{MSSU24} analysed under which conditions intersection graphs of $\alpha$-free homothetic copies of a convex set have bounded row treewidth. See also the paper of \citet[Theorem~1.7]{HKW}, who proved the conjecture of \citet{MSSU24} about the product structure of $\alpha$-free homothetic copies of a regular convex polygon. Compared with these results, \cref{thm:intersectiongraphs} does not restrict shapes of convex sets but requires bounded maximum degree.

We now set out to prove \cref{thm:intersectiongraphs}. For an integer $k \geqslant 0$, a graph is \defn{$k$-planar} if it has a drawing in the plane such that every edge is involved in at most $k$ crossings. Building on the work of \citet{DMW23}, \citet{HW24} proved that every $k$-planar graph $G$ is contained in $H \boxtimes P \boxtimes K_{6(k + 1)^{2}}$ for some graph $H$ with treewidth at most $\binom{k + 4}{3} - 1$ and for some path $P$, and thus $G$ has row treewidth at most $6(k + 1)^{2}\binom{k + 4}{3} - 1$. Thus \cref{thm:intersectiongraphs} is implied by the following.

\begin{prop} \label{connection} Every intersection graph of convex sets in the plane with maximum degree at most $\Delta$ is $2\Delta^{2}$-planar.
\end{prop}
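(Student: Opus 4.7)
The plan is to construct an explicit drawing of $G$ in the plane from the convex representation and bound crossings edge by edge. For each vertex $v \in V(G)$, fix a point $p_v \in C_v$; for each edge $uv \in E(G)$, fix a point $q_{uv}\in C_u \cap C_v$ (non-empty since $uv \in E(G)$). Draw the edge $uv$ as the concatenation of the straight line segments $p_u q_{uv}$ and $q_{uv} p_v$. By convexity of $C_u$ and $C_v$, the first of these segments lies entirely in $C_u$ and the second in $C_v$. So every edge is split into two ``half-edges,'' each contained in the convex set of one of its endpoints.

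The crossing count for an edge $e = uv$ is then analysed half-edge by half-edge. Consider the half-edge $\alpha = p_u q_{uv}\subseteq C_u$. Any other half-edge that crosses $\alpha$ must meet $C_u$; by construction, every half-edge is contained in some $C_w$, and $C_w \cap C_u \neq \emptyset$ only when $w = u$ or $w$ is a neighbour of $u$ in $G$ (this is exactly the definition of the intersection graph). Half-edges sitting inside $C_u$ all emanate from $p_u$, so they share this endpoint with $\alpha$ and contribute no proper crossing. For each of the at most $\Delta$ neighbours $w$ of $u$, the set $C_w$ contains at most $d_w \leqslant \Delta$ half-edges (one per edge incident to $w$), giving at most $\Delta^2$ candidate segments; since two distinct line segments meet in at most one point, $\alpha$ is involved in at most $\Delta^2$ crossings. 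The symmetric argument applied to the half-edge $q_{uv} p_v \subseteq C_v$ yields another $\Delta^2$, so $e$ is involved in at most $2\Delta^2$ crossings in total, which is the desired bound.

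The main obstacle is the usual general-position nuisance: three half-edges meeting at a common interior point, collinear overlaps, or the case when $C_u \cap C_v$ is a single point, forcing the location of $q_{uv}$. I would dispose of this by a generic perturbation of the chosen points $p_v$ and $q_{uv}$ within the open domains where they still lie in the required convex sets, and, in unavoidably degenerate cases, by slightly bending a short piece of a segment near its endpoint in $q_{uv}$ or $p_u$. Since the perturbations can be made arbitrarily small they do not create new crossings, so the per-edge bound $2\Delta^2$ is preserved, establishing that $G$ is $2\Delta^2$-planar.
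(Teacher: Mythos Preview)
Your proposal is correct and follows essentially the same approach as the paper: pick a representative point in each convex set, a witness point in each pairwise intersection, draw each edge as a two-segment polyline through the witness, and bound crossings per half-edge by $\Delta^2$ via the observation that a half-edge lying in $C_u$ can only be crossed by half-edges lying in sets adjacent to $C_u$. Your treatment of the general-position issues is, if anything, slightly more careful than the paper's (which simply declares the sets open after perturbation and the chosen points generic).
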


\begin{proof} Let $\mathcal{C} = \{S_{1}, \dots, S_{n}\}$ be a collection of convex sets in the plane such that the intersection graph $G$ of $\mathcal{C}$ has maximum degree at most $\Delta$. By slightly perturbing the sets of $\mathcal{C}$, if necessary, we can ensure that all sets of $\mathcal{C}$ are open. Recall that, by definition given in \cref{section:intro}, $V(G) = \{S_{1}, \dots, S_{n}\}$.

For each $i \in \{1, \dots, n\}$, take an arbitrary point $p_{i} \in S_{i}$. For each edge $S_{i}S_{j} \in E(G)$, take an arbitrary point $q_{i, j} \in S_{i} \cap S_{j}$. Since all sets of $\mathcal{C}$ are open, we can ensure that all taken points are pairwise distinct and
no three of them lie on a single straight line.

For each $i \in \{1, \dots, n\}$, associate $S_{i}$ with $p_{i}$. Associate each edge $S_{i}S_{j} \in E(G)$ with the $1$-bend polyline $\overline{p_{i}q_{i,j}p_{j}}$. Since all sets of $\mathcal{C}$ are open, we can also ensure that no three edges of $G$ internally cross at a common point in $D$. By construction, this association forms a drawing $D$ of $G$. For each $i \in \{1, \dots, n\}$, since $S_{i}$ is convex, the line segment $\overline{p_{i}q_{i,j}}$ is drawn completely inside $S_{i}$.

Consider any edge $S_{i}S_{j} \in E(G)$ and any line segment $\overline{p_{a}q_{a, b}}$ that crosses $\overline{p_{i}q_{i,j}}$. Since $\overline{p_{i}q_{i,j}}$ is drawn completely inside $S_{i}$ and $\overline{p_{a}q_{a,b}}$ is drawn completely inside $S_{a}$, we have $S_{i} \cap S_{a} \neq \emptyset$ and hence $S_{i}S_{a} \in E(G)$. Since the maximum degree of $G$ is at most $\Delta$, there are at most $\Delta$ choices for $a$, and for each such $a$ there are at most $\Delta$ choices for $b$. Hence there are at most $\Delta^{2}$ line segments $\overline{p_{a}q_{a, b}}$ crossing $\overline{p_{i}q_{i,j}}$. Therefore the part $\overline{p_{i}q_{i,j}}$ of the $1$-bend polyline $\overline{p_{i}q_{i,j}q_{j}}$ associated with the edge $S_{i}S_{j}$ of $G$ is involved in at most $\Delta^{2}$ crossing points. Similarly, the same holds for $\overline{q_{i,j}p_{j}}$. Hence there are at most $2\Delta^{2}$ crossing points of the $1$-bend polyline $\overline{p_{i}q_{i,j}q_{j}}$ associated with the edge $S_{i}S_{j}$ of $G$. Thus $D$ witnesses that $G$ is $2\Delta^{2}$-planar.
\end{proof}

Note that \cref{connection} is optimal up to a constant factor. For example, $K_{\Delta, \Delta}$ has maximum degree $\Delta$ and can be represented as the intersection graph of $\Delta$ vertical and $\Delta$ horizontal segments. This graph has $\Delta^{2}$ edges and crossing number $\Omega(\Delta^{4})$ (see \citep{BLNPSS23} for example). Thus some edge in every drawing of $K_{\Delta, \Delta}$ in the plane is involved in $\Omega(\Delta^{2})$ crossings.

\section{Bounded Maximum Degree is Necessary} \label{Section:examples}

This section discusses that the `bounded maximum degree' assumption in our product structure theorems cannot be relaxed to `bounded degeneracy'. Specifically, we discuss that three important subclasses of string graphs (intersection graphs of disks, grid intersection graphs, and outerstring graphs) with bounded degeneracy do not have bounded row treewidth and layered treewidth.

A lemma of \citet[Lemma~6]{DMW17} and a well-known relation between layered treewidth and row treewidth~\citep[Section~2]{BDJMW22} imply the following.

\begin{lem} [\citep{DMW17,BDJMW22}] \label{lem:knownrelations} For every graph $G$ with radius $r$, $$\rtw(G) + 1 \geqslant \ltw(G) \geqslant (\tw(G) + 1)/(2r + 1).$$
\end{lem}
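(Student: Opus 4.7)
The plan is to handle the two inequalities separately. The first, $\rtw(G) + 1 \geqslant \ltw(G)$, is a straightforward product-of-decompositions argument, while the second, $\ltw(G) \geqslant (\tw(G)+1)/(2r+1)$, is essentially the special case $c = 1$ of \cref{lem:radiuskverticesltw}, which is already available in the paper.

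For the first inequality, suppose $G$ is contained in $H \boxtimes P$ where $\tw(H) \leqslant \rtw(G)$ and $P$ is a path. Fix a tree decomposition $(T, B)$ of $H$ of width $\rtw(G)$, and consider the natural layering $\mathcal{L} = (V_0, V_1, \dots, V_s)$ of $H \boxtimes P$ where $V_i := V(H) \times \{p_i\}$ and $p_0, \dots, p_s$ enumerate the vertices of $P$ in order; this is indeed a layering since every edge of $H \boxtimes P$ either stays within some $V_i$ or connects $V_i$ and $V_{i+1}$. Define a tree decomposition $(T, B')$ of $H \boxtimes P$ by $B'(t) := B(t) \times V(P)$. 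One verifies that $(T, B')$ is a tree decomposition, and by construction each bag $B'(t)$ contains exactly $|B(t)| \leqslant \rtw(G) + 1$ vertices in each layer $V_i$. Restricting to $V(G) \subseteq V(H \boxtimes P)$ yields a tree decomposition of $G$ witnessing $\ltw(G) \leqslant \rtw(G) + 1$.

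For the second inequality, let $w$ be a center of $G$, so that $\dist_G(v, w) \leqslant r$ for every $v \in V(G)$. Apply \cref{lem:radiuskverticesltw} with $c = 1$ and the single vertex $w_1 := w$, which immediately yields $\tw(G) \leqslant (2r + 1)\cdot 1 \cdot \ltw(G) - 1$. Rearranging gives $\ltw(G) \geqslant (\tw(G) + 1)/(2r + 1)$, as required. (Internally, the mechanism is that any layering of a radius-$r$ graph has at most $2r + 1$ non-empty layers, since along any path from $w$ of length at most $r$ consecutive layer indices differ by at most one; hence each bag of a layered tree decomposition has size at most $(2r+1)\ltw(G)$.)

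There is essentially no obstacle here: both inequalities are folklore consequences of the definitions once one has the natural layering of a strong product with a path and the radius-to-number-of-layers observation already used to prove \cref{lem:radiuskverticesltw}. The only minor care needed is in the first inequality, to ensure that taking a subgraph preserves both the tree decomposition and the layering, which it does since restricting bags to $V(G)$ and layers to $V(G) \cap V_i$ preserves all defining properties.
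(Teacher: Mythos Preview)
Your proposal is correct. The paper does not actually prove \cref{lem:knownrelations}; it merely cites \citet[Lemma~6]{DMW17} for the second inequality and \citet[Section~2]{BDJMW22} for the first, and your argument supplies precisely the standard details behind those citations: the product-of-decompositions layering of $H\boxtimes P$ for $\rtw(G)+1\geqslant\ltw(G)$, and the $c=1$ case of \cref{lem:radiuskverticesltw} (which is exactly \citet[Lemma~6]{DMW17}) for $\ltw(G)\geqslant(\tw(G)+1)/(2r+1)$.
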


\header{Intersection Graphs of Disks}  Grids constitute a classical example of graphs with large treewidth. The \defn{$(t \times t)$-grid} is the graph with vertex set $\{1, \dots, t\} \times \{1, \dots, t\}$ where vertices $(v_{1}, v_{2})$ and $(u_{1}, u_{2})$ are adjacent whenever $|v_{1} - u_{1}| + |v_{2} - u_{2}| = 1$ (see \cref{gridfigure}). The treewidth of the $(t \times t)$-grid is $t$ for every $t \geqslant 2$ (see \citep[Lemma~20]{HW17} for a proof). For every $t \geqslant 1$, there exists a $3$-degenerate intersection graph of disks isomorphic to the graph obtained from the $(t \times t)$-grid by adding a dominant vertex (see \cref{disksfigure}). This graph has radius~$1$. Thus by \cref{lem:knownrelations}, $n$-vertex intersection graphs of disks have layered treewidth $\Omega(\sqrt{n})$ and row treewidth $\Omega(\sqrt{n})$. This simple example was also mentioned by \citet{MSSU24}.

\begin{figure}[h]
    \begin{subfigure}[t]{0.5\textwidth}
    \centering
        \scalebox{0.9}{\includegraphics{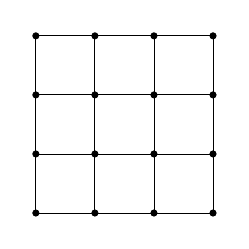}}
        \subcaption{$(4 \times 4)$-grid}
        \label{gridfigure}
    \end{subfigure}
    \begin{subfigure}[t]{0.5\textwidth}
    \centering
        \scalebox{0.9}{\includegraphics{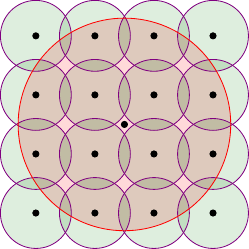}}
        \subcaption{Intersection graph of disks}
        \label{disksfigure}
    \end{subfigure}
    \caption{}
    \label{gridballs}
\end{figure}

\header{Grid Intersection Graphs} A \defn{grid intersection graph} is the intersection graph of a collection of horizontal and vertical segments in the plane such that no two parallel segments intersect. Grid intersection graphs form a natural subclass of string graphs and are well studied; see \citep{HNZ91,BHPW93,CFHW18} for example. Every grid intersection graph is also the intersection graph of axis parallel rectangles in the plane. Such graphs form a natural subclass of intersection graphs of convex sets and are well studied; see \citep{AG60,CW21,CFPS15} for example.

\begin{figure}[h]
        \centering
        \scalebox{1.07}{\includegraphics{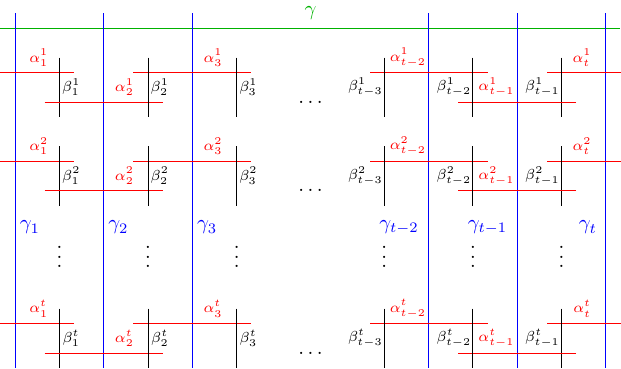}}
        \caption{A collection $\mathcal{C}$ of horizontal and vertical segments. The horizontal segment $\gamma$ (drawn in green) crosses $t$ vertical segments $\gamma_{1}, \dots, \gamma_{t}$ (drawn in blue) ordered `from left to right'. For each $i \in \{1, \dots, t\}$ and $j \in \{1, \dots, t\}$, the horizontal segment $\alpha_{i}^{j}$ (drawn in red) crosses $\gamma_{i}$ and no other segments of $\{\gamma_{1}, \dots, \gamma_{t}\}$. For each $i \in \{1, \dots, t - 1\}$ and $j \in \{1, \dots, t\}$, the vertical segment $\beta_{i}^{j}$ (drawn in black) crosses two segments $\alpha_{i}^{j}$, $\alpha_{i + 1}^{j}$ and crosses no other segment of $\mathcal{C}$.}
        \label{gridintersection}
\end{figure}

For a fixed integer $t \geqslant 1$, let $\mathcal{C}$ be the collection of horizontal and vertical segments defined and illustrated in \cref{gridintersection}. Observe that the intersection graph $G$ of $\mathcal{C}$ is a $2$-degenerate grid intersection graph. Note that $G$ is $K_{2, 2}$-free, has radius $3$ and $2t^{2} + 1$ vertices. 
    For each $j \in \{1, \dots, t\}$, let $X_{j} := \{\alpha_{1}^{j}, \beta_{1}^{j}, \alpha_{2}^{j}, \beta_{2}^{j}, \dots, \alpha_{t - 1}^{j}, \beta_{t - 1}^{j}, \alpha_{t}^{j}\}$. Note that the subgraph of $G$ induced by $X_{j}$ is a path. For each $i, j \in \{1, \dots, t\}$, the segment $\gamma_{i}$ is adjacent to the segment $\alpha_{i}^{j} \in X_{j}$.
    So $\gamma_{1}, \dots, \gamma_{t}$ and $X_{1}, \dots, X_{t}$ form a model of $K_{t, t}$ in $G$, and hence $K_{t, t}$ is a minor of $G$. Since $\tw(K_{t, t}) = t$, we have $\tw(G) \geqslant t$. Thus by \cref{lem:knownrelations}, $2$-degenerate $K_{2,2}$-free grid intersection graphs on $n$ vertices have layered treewidth $\Omega(\sqrt{n})$ and row treewidth $\Omega(\sqrt{n})$.

\header{Outerstring Graphs} \citet[Lemma~13]{SOX24} constructed $2$-degenerate $n$-vertex outerstring graphs with treewidth $\Omega(\log{n})$. Let $\mathcal{C}$ be a collection of $n$ curves in the plane grounded on a disk $D$ given by their construction. So the intersection graph $G$ of $\mathcal{C}$ is outerstring and $\tw(G) \in \Omega(\log{n})$. Since $\mathcal{C}$ is finite, there exists a curve $\gamma \notin \mathcal{C}$ outside of $D$ that has exactly one endpoint on the boundary of $D$ and crosses every curve in $\mathcal{C}$. Define $\mathcal{C}_{1} := \mathcal{C} \cup \{\gamma\}$. By construction, the intersection graph $H$ of $\mathcal{C}_{1}$ is outerstring and has radius $1$ and $n + 1$ vertices. Since $\tw(G) \in \Omega(\log{n})$ and $\tw(H) \geqslant \tw(G)$, we have $\tw(H) \in \Omega(\log{n})$. Thus by \cref{lem:knownrelations}, $3$-degenerate $n$-vertex outerstring graphs have layered treewidth $\Omega(\log{n})$ and row treewidth $\Omega(\log{n})$.

\section{Conclusion} \label{section:conclusion}

This paper shows that string graphs with bounded maximum degree have bounded row treewidth. \cref{Section:examples} discusses that the `bounded maximum degree' assumption cannot be replaced by `bounded degeneracy'. Still, it is desirable to have a strong structural description of $d$-degenerate string graphs in the spirit of product structure theory. It is not clear what such
a structural description should be. As a guide, one would expect that such a result would lead to positive solutions to the following open problems. Do $d$-degenerate string graphs have bounded queue number? Do $d$-degenerate string graphs have polynomial $p$-centred chromatic number? What is the size of the smallest universal graph for the class of $d$-degenerate $n$-vertex string graphs?

\subsection*{Acknowledgements}

Thanks to David Wood for helpful discussions and suggestions to improve this paper. Thanks to Kevin Hendrey and Jung Hon Yip for comments on an early draft of this paper.

{
\fontsize{10pt}{11pt}
\selectfont
\bibliographystyle{NikolaiNatbibStyle}
\bibliography{NikolaiBibliography}

\def\soft#1{\leavevmode\setbox0=\hbox{h}\dimen7=\ht0\advance \dimen7 by-1ex\relax\if t#1\relax\rlap{\raise.6\dimen7 \hbox{\kern.3ex\char'47}}#1\relax\else\if T#1\relax \rlap{\raise.5\dimen7\hbox{\kern1.3ex\char'47}}#1\relax \else\if d#1\relax\rlap{\raise.5\dimen7\hbox{\kern.9ex \char'47}}#1\relax\else\if D#1\relax\rlap{\raise.5\dimen7 \hbox{\kern1.4ex\char'47}}#1\relax\else\if l#1\relax \rlap{\raise.5\dimen7\hbox{\kern.4ex\char'47}}#1\relax \else\if L#1\relax\rlap{\raise.5\dimen7\hbox{\kern.7ex \char'47}}#1\relax\else\message{accent \string\soft \space #1 not defined!}#1\relax\fi\fi\fi\fi\fi\fi}
\begin{thebibliography}{58}
\providecommand{\natexlab}[1]{#1}
\providecommand{\msn}[1]{MR:\,\href{http://www.ams.org/mathscinet-getitem?mr=MR{#1}}{#1}}
\providecommand{\ZBL}[1]{Zbl:\,\href{https://www.zentralblatt-math.org/zmath/en/search/?q=an:#1}{#1}}
\providecommand{\url}[1]{\texttt{#1}}
\providecommand{\urlprefix}{}
\expandafter\ifx\csname urlstyle\endcsname\relax
  \providecommand{\doi}[1]{doi:\discretionary{}{}{}#1}\else
  \providecommand{\doi}{doi:\discretionary{}{}{}\begingroup \urlstyle{rm}\Url}\fi

\bibitem[{An et~al.(2024)An, Oh, and Xue}]{SOX24}
\textsc{Shinwoo An, Eunjin Oh, and Jie Xue}.
\newblock \href{https://drops.dagstuhl.de/entities/document/10.4230/LIPIcs.ESA.2024.10}{{Sparse Outerstring Graphs Have Logarithmic Treewidth}}.
\newblock In \textsc{Timothy Chan, Johannes Fischer, John Iacono, and Grzegorz Herman}, eds., \emph{Proc. 32nd Annual European Symposium on Algorithms \textnormal{(ESA 2024)}}, vol. 308 of \emph{LIPIcs}, pp. 10:1--10:18. Schloss Dagstuhl, 2024.

\bibitem[{Asplund and Gr{\"u}nbaum(1960)}]{AG60}
\textsc{Edgar Asplund and Branko Gr{\"u}nbaum}.
\newblock \href{http://eudml.org/doc/165747}{On a coloring problem}.
\newblock \emph{Mathematica Scandinavica}, 8:181--188, 1960.

\bibitem[{Balogh et~al.(2023)Balogh, Lidick{\'{y}}, Norin, Pfender, Salazar, and Spiro}]{BLNPSS23}
\textsc{J{\'{o}}zsef Balogh, Bernard Lidick{\'{y}}, Sergey Norin, Florian Pfender, Gelasio Salazar, and Sam Spiro}.
\newblock \href{https://doi.org/10.1016/j.procs.2023.08.216}{Crossing numbers of complete bipartite graphs}.
\newblock In \emph{Proc. {XII} Latin-American Algorithms, Graphs and Optimization Symposium \textnormal{({LAGOS} 2023)}}, vol. 223 of \emph{Procedia Computer Science}, pp. 78--87. 2023.

\bibitem[{Bellantoni et~al.(1993)Bellantoni, Hartman, Przytycka, and Whitesides}]{BHPW93}
\textsc{Stephen Bellantoni, Irith Ben-Arroyo Hartman, Teresa Przytycka, and Sue Whitesides}.
\newblock \href{https://www.sciencedirect.com/science/article/pii/0012365X9390354V}{Grid intersection graphs and boxicity}.
\newblock \emph{Discrete Mathematics}, 114(1):41--49, 1993.

\bibitem[{Benzer(1959)}]{Benzer59}
\textsc{Seymour Benzer}.
\newblock \href{https://www.pnas.org/doi/abs/10.1073/pnas.45.11.1607}{On the topology of the genetic fine structure}.
\newblock \emph{Proceedings of the National Academy of Sciences}, 45(11):1607--1620, 1959.

\bibitem[{Bodlaender(1998)}]{Bodlaender98}
\textsc{Hans~L. Bodlaender}.
\newblock \href{https://doi.org/10.1016/S0304-3975(97)00228-4}{A partial $k$-arboretum of graphs with bounded treewidth}.
\newblock \emph{Theoret. Comput. Sci.}, 209(1-2):1--45, 1998.

\bibitem[{Bose et~al.(2022)Bose, Dujmovi\'c, Javarsineh, Morin, and Wood}]{BDJMW22}
\textsc{Prosenjit Bose, Vida Dujmovi\'c, Mehrnoosh Javarsineh, Pat Morin, and David~R. Wood}.
\newblock \href{https://doi.org/10.46298/dmtcs.7458}{Separating layered treewidth and row treewidth}.
\newblock \emph{Disc. Math. Theor. Comput. Sci.}, 24(1):\#18, 2022.

\bibitem[{Campbell et~al.(2024)Campbell, Davies, Distel, Frederickson, Gollin, Hendrey, Hickingbotham, Wiederrecht, Wood, and Yepremyan}]{CDDFGHHWWY24}
\textsc{Rutger Campbell, James Davies, Marc Distel, Bryce Frederickson, J.~Pascal Gollin, Kevin Hendrey, Robert Hickingbotham, Sebastian Wiederrecht, David~R. Wood, and Liana Yepremyan}.
\newblock \href{http://arxiv.org/abs/2410.19295}{{Treewidth, Hadwiger number, and induced minors}}.
\newblock 2024, arXiv:2410.19295.

\bibitem[{Chalermsook and Walczak(2021)}]{CW21}
\textsc{Parinya Chalermsook and Bartosz Walczak}.
\newblock \href{https://doi.org/10.1137/1.9781611976465.54}{Coloring and maximum weight independent set of rectangles}.
\newblock In \emph{Proc. 2021 Annual {ACM-SIAM} Symposium on Discrete Algorithms \textnormal{(SODA'21)}}, pp. 860--868. 2021.

\bibitem[{Chang et~al.(2025)Chang, Conroy, Tan, and Zheng}]{CCTZ25}
\textsc{Hsien-Chih Chang, Jonathan Conroy, Zihan Tan, and Da~Wei Zheng}.
\newblock \href{http://arxiv.org/abs/2510.21700}{{$O(1)$-distortion planar emulators for string graphs}}.
\newblock 2025, arXiv:2510.21700.

\bibitem[{Chaplick et~al.(2018)Chaplick, Felsner, Hoffmann, and Wiechert}]{CFHW18}
\textsc{Steven Chaplick, Stefan Felsner, Udo Hoffmann, and Veit Wiechert}.
\newblock \href{https://doi.org/10.1007/s11083-017-9437-0}{Grid intersection graphs and order dimension}.
\newblock \emph{Order}, 35(2):363--391, 2018.

\bibitem[{Correa et~al.(2015)Correa, Feuilloley, P{\'{e}}rez{-}Lantero, and Soto}]{CFPS15}
\textsc{Jos{\'{e}} Correa, Laurent Feuilloley, Pablo P{\'{e}}rez{-}Lantero, and Jos{\'{e}}~A. Soto}.
\newblock \href{https://doi.org/10.1007/s00454-014-9661-y}{Independent and hitting sets of rectangles intersecting a diagonal line: Algorithms and complexity}.
\newblock \emph{Discret. Comput. Geom.}, 53(2):344--365, 2015.

\bibitem[{Davies(2025)}]{Davies25}
\textsc{James Davies}.
\newblock \href{http://arxiv.org/abs/2510.19602}{String graphs are quasi-isometric to planar graphs}.
\newblock 2025, arXiv:2510.19602.

\bibitem[{Diestel(2018)}]{Diestel5}
\textsc{Reinhard Diestel}.
\newblock Graph theory, vol. 173 of \emph{Graduate Texts in Mathematics}.
\newblock Springer, 5th edn., 2018.

\bibitem[{Distel et~al.(2022)Distel, Hickingbotham, Huynh, and Wood}]{DHHW22}
\textsc{Marc Distel, Robert Hickingbotham, Tony Huynh, and David~R. Wood}.
\newblock \href{https://doi.org/10.46298/dmtcs.8877}{Improved product structure for graphs on surfaces}.
\newblock \emph{Discrete Math. Theoret. Comput. Sci.}, 24(2):\#6, 2022.

\bibitem[{D\k{e}bski et~al.(2021)D\k{e}bski, Felsner, Micek, and Schr\"{o}der}]{DFMS21}
\textsc{Micha{\l} D\k{e}bski, Stefan Felsner, Piotr Micek, and Felix Schr\"{o}der}.
\newblock \href{https://doi.org/10.19086/aic.27351}{Improved bounds for centered colorings}.
\newblock \emph{Adv. Comb.}, \#8, 2021.

\bibitem[{Dujmovi\'c et~al.(2021)Dujmovi\'c, Esperet, Gavoille, Joret, Micek, and Morin}]{DEGJMM21}
\textsc{Vida Dujmovi\'c, Louis Esperet, Cyril Gavoille, Gwena\"el Joret, Piotr Micek, and Pat Morin}.
\newblock \href{https://doi.org/10.1145/3477542}{Adjacency labelling for planar graphs (and beyond)}.
\newblock \emph{J. ACM}, 68(6):\#42, 2021.

\bibitem[{Dujmovi{\'c} et~al.(2020)Dujmovi{\'c}, Joret, Micek, Morin, Ueckerdt, and Wood}]{DJMMUW20}
\textsc{Vida Dujmovi{\'c}, Gwena\"{e}l Joret, Piotr Micek, Pat Morin, Torsten Ueckerdt, and David~R. Wood}.
\newblock \href{https://doi.org/10.1145/3385731}{Planar graphs have bounded queue-number}.
\newblock \emph{J. ACM}, 67(4):\#22, 2020.

\bibitem[{Dujmovi{\'c} et~al.(2017)Dujmovi{\'c}, Morin, and Wood}]{DMW17}
\textsc{Vida Dujmovi{\'c}, Pat Morin, and David~R. Wood}.
\newblock \href{https://doi.org/10.1016/j.jctb.2017.05.006}{Layered separators in minor-closed graph classes with applications}.
\newblock \emph{J. Combin. Theory Ser. B}, 127:111--147, 2017.
\newblock arXiv:1306.1595.

\bibitem[{Dujmovi{\'c} et~al.(2023)Dujmovi{\'c}, Morin, and Wood}]{DMW23}
\textsc{Vida Dujmovi{\'c}, Pat Morin, and David~R. Wood}.
\newblock \href{https://doi.org/10.1016/j.jctb.2023.03.004}{Graph product structure for non-minor-closed classes}.
\newblock \emph{J. Combin. Theory Ser. B}, 162:34--67, 2023.

\bibitem[{Dvo\v{r}\'{a}k and Norin(2019)}]{DN19}
\textsc{Zden\v{e}k Dvo\v{r}\'{a}k and Sergey Norin}.
\newblock \href{https://doi.org/10.1016/j.jctb.2018.12.007}{Treewidth of graphs with balanced separations}.
\newblock \emph{J. Combin. Theory Ser. B}, 137:137--144, 2019.

\bibitem[{Ehrlich et~al.(1976)Ehrlich, Even, and Tarjan}]{EET76}
\textsc{Gideon Ehrlich, Shimon Even, and Robert~Endre Tarjan}.
\newblock \href{https://doi.org/10.1016/0095-8956(76)90022-8}{Intersection graphs of curves in the plane}.
\newblock \emph{J. Comb. Theory {B}}, 21(1):8--20, 1976.

\bibitem[{Esperet et~al.(2023)Esperet, Joret, and Morin}]{EJM23}
\textsc{Louis Esperet, Gwena\"{e}l Joret, and Pat Morin}.
\newblock \href{https://doi.org/10.1112/jlms.12781}{Sparse universal graphs for planarity}.
\newblock \emph{J. London Math. Soc.}, 108(4):1333--1357, 2023.

\bibitem[{Fox and Pach(2010)}]{FP10}
\textsc{Jacob Fox and J{\'a}nos Pach}.
\newblock \href{https://doi.org/10.1017/S0963548309990459}{A separator theorem for string graphs and its applications}.
\newblock \emph{Combin. Probab. Comput.}, 19(3):371--390, 2010.

\bibitem[{Fox and Pach(2012{\natexlab{a}})}]{FP-EJC12}
\textsc{Jacob Fox and J{\'{a}}nos Pach}.
\newblock \href{https://doi.org/10.1016/j.ejc.2011.09.021}{{Coloring $K_{k}$-free intersection graphs of geometric objects in the plane}}.
\newblock \emph{Eur. J. Comb.}, 33(5):853--866, 2012{\natexlab{a}}.

\bibitem[{Fox and Pach(2012{\natexlab{b}})}]{FP12}
\textsc{Jacob Fox and J{\'a}nos Pach}.
\newblock \href{https://doi.org/10.1016/j.aim.2012.03.011}{String graphs and incomparability graphs}.
\newblock \emph{Adv. Math.}, 230(3):1381--1401, 2012{\natexlab{b}}.

\bibitem[{Fox and Pach(2014)}]{FP14}
\textsc{Jacob Fox and J{\'a}nos Pach}.
\newblock \href{https://doi.org/10.1017/S0963548313000412}{Applications of a new separator theorem for string graphs}.
\newblock \emph{Combin. Probab. Comput.}, 23(1):66--74, 2014.

\bibitem[{Graham(1976)}]{G76}
\textsc{Ronald~L. Graham}.
\newblock Problem~1.
\newblock In \emph{Open Problems at 5th Hungarian Colloquium on Combinatorics}. Keszthely, Hungary, 1976.

\bibitem[{Hartman et~al.(1991)Hartman, Newman, and Ziv}]{HNZ91}
\textsc{Irith Ben-Arroyo Hartman, Ilan Newman, and Ran Ziv}.
\newblock \href{https://www.sciencedirect.com/science/article/pii/0012365X9190069E}{On grid intersection graphs}.
\newblock \emph{Discrete Mathematics}, 87(1):41--52, 1991.

\bibitem[{Harvey and Wood(2017)}]{HW17}
\textsc{Daniel~J. Harvey and David~R. Wood}.
\newblock \href{https://doi.org/10.1002/jgt.22030}{Parameters tied to treewidth}.
\newblock \emph{J. Graph Theory}, 84(4):364--385, 2017.

\bibitem[{Hendrey et~al.(2025)Hendrey, Karol, and Wood}]{HKW}
\textsc{Kevin Hendrey, Nikolai Karol, and David~R. Wood}.
\newblock \href{http://arxiv.org/abs/2507.22395}{Structure of $k$-matching-planar graphs}.
\newblock 2025, arXiv:2507.22395.

\bibitem[{Hickingbotham and Wood(2024)}]{HW24}
\textsc{Robert Hickingbotham and David~R. Wood}.
\newblock \href{https://doi.org/10.1137/22M1540296}{Shallow minors, graph products and beyond-planar graphs}.
\newblock \emph{SIAM J. Discrete Math.}, 38(1):1057--1089, 2024.

\bibitem[{Janson and Uzzell(2017)}]{JU17}
\textsc{Svante Janson and Andrew~J. Uzzell}.
\newblock \href{https://doi.org/10.1002/jgt.22031}{On string graph limits and the structure of a typical string graph}.
\newblock \emph{J. Graph Theory}, 84(4):386--407, 2017.

\bibitem[{Kierstead and Yang(2003)}]{KY03}
\textsc{Hal~A. Kierstead and Daqing Yang}.
\newblock \href{https://doi.org/10.1023/B:ORDE.0000026489.93166.cb}{Orderings on graphs and game coloring number}.
\newblock \emph{Order}, 20(3):255--264, 2003.

\bibitem[{Kratochv{\'{\i}}l(1991{\natexlab{a}})}]{Kratochvil91}
\textsc{Jan Kratochv{\'{\i}}l}.
\newblock \href{https://doi.org/10.1016/0095-8956(91)90090-7}{String graphs. {I}. {T}he number of critical nonstring graphs is infinite}.
\newblock \emph{J. Comb. Theory {B}}, 52(1):53--66, 1991{\natexlab{a}}.

\bibitem[{Kratochv{\'{\i}}l(1991{\natexlab{b}})}]{Krat-JCTB91}
\textsc{Jan Kratochv{\'{\i}}l}.
\newblock \href{https://doi.org/10.1016/0095-8956(91)90091-W}{String graphs. {II}. {R}ecognizing string graphs is {NP}-hard}.
\newblock \emph{J. Combin. Theory Ser. B}, 52(1):67--78, 1991{\natexlab{b}}.

\bibitem[{Kratochv{\'{\i}}l et~al.(1986)Kratochv{\'{\i}}l, Goljan, and Ku{\v{c}}era}]{KGK86}
\textsc{Jan Kratochv{\'{\i}}l, Miroslav Goljan, and Petr Ku{\v{c}}era}.
\newblock String graphs.
\newblock \emph{Rozpravy \v Ceskoslovensk\'e Akad. V\v ed \v Rada Mat. P\v r\'\i rod. V\v ed}, 96(3), 1986.

\bibitem[{Kratochv{\'{\i}}l and Matousek(1991)}]{KM91}
\textsc{Jan Kratochv{\'{\i}}l and Jir{\'{\i}} Matousek}.
\newblock \href{https://doi.org/10.1016/0095-8956(91)90050-T}{String graphs requiring exponential representations}.
\newblock \emph{J. Comb. Theory {B}}, 53(1):1--4, 1991.

\bibitem[{Kyn{\v{c}}l et~al.(2024)Kyn{\v{c}}l, Schaefer, Sedgwick, and {\v{S}}tefankovi{\v{c}}}]{KSSS24}
\textsc{Jan Kyn{\v{c}}l, Marcus Schaefer, Eric Sedgwick, and Daniel {\v{S}}tefankovi{\v{c}}}.
\newblock \href{https://doi.org/10.1007/s00454-023-00603-z}{Spiraling and folding: The topological view}.
\newblock \emph{Discret. Comput. Geom.}, 72(1):246--268, 2024.

\bibitem[{Lee(2017)}]{Lee17}
\textsc{James~R. Lee}.
\newblock \href{https://doi.org/10.4230/LIPIcs.ITCS.2017.1}{Separators in region intersection graphs}.
\newblock In \textsc{Christos~H. Papadimitriou}, ed., \emph{Proc. 8th {I}nnovations in {T}heoretical {C}omputer {S}cience {C}onference}, vol.~67 of \emph{LIPIcs}, pp. 1:1--1:8. Schloss Dagstuhl, 2017.

\bibitem[{Matou{\v{s}}ek(2014)}]{Mat14}
\textsc{Ji{\v{r}}{\'{\i}} Matou{\v{s}}ek}.
\newblock \href{https://doi.org/10.1017/S0963548313000400}{Near-optimal separators in string graphs}.
\newblock \emph{Combin. Probab. Comput.}, 23(1):135--139, 2014.

\bibitem[{Matou{\v{s}}ek(2015)}]{Mat15}
\textsc{Ji{\v{r}}{\'{\i}} Matou{\v{s}}ek}.
\newblock String graphs and separators.
\newblock In \emph{Geometry, structure and randomness in combinatorics}, vol.~18 of \emph{CRM Series}, pp. 61--97. Ed. Norm., Pisa, 2015.

\bibitem[{Merker et~al.(2024)Merker, Scherzer, Schneider, and Ueckerdt}]{MSSU24}
\textsc{Laura Merker, Lena Scherzer, Samuel Schneider, and Torsten Ueckerdt}.
\newblock \href{https://drops.dagstuhl.de/entities/document/10.4230/LIPIcs.GD.2024.23}{{Intersection graphs with and without product structure}}.
\newblock In \textsc{Stefan Felsner and Karsten Klein}, eds., \emph{Proc. 32nd International Symposium on Graph Drawing and Network Visualization \textup{(GD 2024)}}, vol. 320 of \emph{LIPIcs}, pp. 23:1--23:19. Schloss Dagstuhl, 2024.

\bibitem[{Middendorf and Pfeiffer(1992)}]{MP-DM92}
\textsc{Matthias Middendorf and Frank Pfeiffer}.
\newblock \href{https://doi.org/10.1016/0012-365X(92)90688-C}{The max clique problem in classes of string-graphs}.
\newblock \emph{Discrete Math.}, 108(1--3):365--372, 1992.

\bibitem[{Middendorf and Pfeiffer(1993)}]{MP-DM93}
\textsc{Matthias Middendorf and Frank Pfeiffer}.
\newblock \href{https://doi.org/10.1016/0012-365X(93)90176-T}{Weakly transitive orientations, {H}asse diagrams and string graphs}.
\newblock \emph{Discrete Math.}, 111(1--3):393--400, 1993.

\bibitem[{Pach et~al.(2020)Pach, Reed, and Yuditsky}]{PRY20}
\textsc{J{\'{a}}nos Pach, Bruce~A. Reed, and Yelena Yuditsky}.
\newblock \href{https://doi.org/10.1007/s00454-020-00213-z}{Almost all string graphs are intersection graphs of plane convex sets}.
\newblock \emph{Discret. Comput. Geom.}, 63(4):888--917, 2020.

\bibitem[{Pach and T{\'o}th(2002)}]{PachToth-DCG02}
\textsc{J{\'a}nos Pach and G{\'e}za T{\'o}th}.
\newblock \href{https://doi.org/10.1007/s00454-002-2891-4}{Recognizing string graphs is decidable}.
\newblock \emph{Discrete Comput. Geom.}, 28(4):593--606, 2002.

\bibitem[{Pach and T{\'{o}}th(2006)}]{PT06}
\textsc{J{\'{a}}nos Pach and G{\'{e}}za T{\'{o}}th}.
\newblock \href{https://doi.org/10.1007/s00493-006-0032-z}{How many ways can one draw a graph?}
\newblock \emph{Combinatorica}, 26(5):559--576, 2006.

\bibitem[{Pawlik et~al.(2014)Pawlik, Kozik, Krawczyk, Laso{\'n}, Micek, Trotter, and Walczak}]{Pawlik14}
\textsc{Arkadiusz Pawlik, Jakub Kozik, Tomasz Krawczyk, Micha{\l} Laso{\'n}, Piotr Micek, William~T. Trotter, and Bartosz Walczak}.
\newblock \href{https://doi.org/10.1016/j.jctb.2013.11.001}{Triangle-free intersection graphs of line segments with large chromatic number}.
\newblock \emph{J. Combin. Theory Ser. B}, 105:6--10, 2014.

\bibitem[{Reed(2003)}]{Reed03}
\textsc{Bruce~A. Reed}.
\newblock \href{https://doi.org/10.1007/0-387-22444-0\_4}{Algorithmic aspects of tree width}.
\newblock In \emph{Recent advances in algorithms and combinatorics}, vol.~11, pp. 85--107. Springer, 2003.

\bibitem[{Robertson and Seymour(1984)}]{RS-III}
\textsc{Neil Robertson and Paul Seymour}.
\newblock \href{https://doi.org/10.1016/0095-8956(84)90013-3}{Graph minors. {III}. {P}lanar tree-width}.
\newblock \emph{J. Combin. Theory Ser. B}, 36(1):49--64, 1984.

\bibitem[{Rok and Walczak(2019)}]{RW19}
\textsc{Alexandre Rok and Bartosz Walczak}.
\newblock \href{https://doi.org/10.1137/17M1157374}{Outerstring graphs are $\chi$-bounded}.
\newblock \emph{SIAM J. Discrete Math.}, 33(4):2181--2199, 2019.

\bibitem[{Schaefer(2018)}]{Schaefer18}
\textsc{Marcus Schaefer}.
\newblock \href{https://doi.org/10.1201/9781315152394}{Crossing numbers of graphs}.
\newblock CRC Press, 2018.

\bibitem[{Schaefer et~al.(2003)Schaefer, Sedgwick, and {\v{S}}tefankovi{\v{c}}}]{SSS-JCSS03}
\textsc{Marcus Schaefer, Eric Sedgwick, and Daniel {\v{S}}tefankovi{\v{c}}}.
\newblock \href{https://doi.org/10.1016/S0022-0000(03)00045-X}{Recognizing string graphs in {NP}}.
\newblock \emph{J. Comput. System Sci.}, 67(2):365--380, 2003.

\bibitem[{Schaefer and {\v{S}}tefankovi{\v{c}}(2004)}]{SS-JCSS04}
\textsc{Marcus Schaefer and Daniel {\v{S}}tefankovi{\v{c}}}.
\newblock \href{https://doi.org/10.1016/j.jcss.2003.07.002}{Decidability of string graphs}.
\newblock \emph{J. Comput. System Sci.}, 68(2):319--334, 2004.

\bibitem[{Sinden(1966)}]{Sinden66}
\textsc{Frank~W. Sinden}.
\newblock \href{https://onlinelibrary.wiley.com/doi/abs/10.1002/j.1538-7305.1966.tb01713.x}{Topology of thin film {RC} circuits}.
\newblock \emph{Bell System Technical Journal}, 45(9):1639--1662, 1966.

\bibitem[{Tomon(2024)}]{Tom24}
\textsc{Istv{\'{a}}n Tomon}.
\newblock \href{https://doi.org/10.4171/JEMS/1362}{String graphs have the {E}rd{\H{o}}s--{H}ajnal property.}
\newblock \emph{J. European Math. Soc.}, 26(1):275--287, 2024.

\bibitem[{van~den Heuvel and Wood(2018)}]{vdHW18}
\textsc{Jan van~den Heuvel and David~R. Wood}.
\newblock \href{https://doi.org/10.1112/jlms.12127}{Improper colourings inspired by {H}adwiger's conjecture}.
\newblock \emph{J. London Math. Soc.}, 98:129--148, 2018.
\newblock arXiv:1704.06536.

\end{thebibliography}
}
\end{document}